\documentclass{article}

\usepackage[T1]{fontenc}
\usepackage{amsmath, amsthm}
\usepackage{amssymb}
\usepackage{amscd}
\usepackage{graphicx}
\usepackage{enumitem}
\usepackage{float}
\usepackage{subfig}
\usepackage[all]{xy}

\newtheorem{Theorem}{Theorem}[section]
\newtheorem{Corollary}[Theorem]{Corollary}
\newtheorem{Lemma}[Theorem]{Lemma}

\newtheorem{Proposition}[Theorem]{Proposition}

\newtheorem{Example}[Theorem]{Example}

\def\set-up{\noindent {\em Set-Up:}\ }

\def\<{\langle}
\def\>{\rangle}

\def\'{\,^\prime}
\def\"{\,^{\prime\prime}}

\begin{document}

\title{On $\textbf{u}$-substitutions for group presentations}
\author{Kirk McDermott}
\maketitle

\begin{abstract} 

\noindent We investigate groups $\hat{G}$ given by a presentation $\mathcal{P}=\langle \textbf{x}: \textbf{r} \rangle$ whose relators $\textbf{r} \subseteq F(\textbf{x})$ are comprised of a set of subwords in $F(\textbf{x})$, i.e. $\textbf{r}$ admits a $\textbf{u}$-substitution in the sense that there exists a homomorphsim $\epsilon: F(\textbf{u}) \rightarrow F(\textbf{x})$ and a subset $\mathbf{v} \subseteq F(\textbf{u})$ such that $\textbf{r}=\epsilon(\textbf{v})$. Equivalently, $\mathcal{P}= \langle \textbf{x}: \epsilon (\textbf{v}) \rangle$ is referred to as the composition of the presentation $\mathcal{G}= \langle \textbf{u}: \textbf{v} \rangle$ with $\mathcal{H}= \langle \textbf{x}: \epsilon(\textbf{u}) \rangle$ of the groups $G$ and $H$, respectively. We survey known results and record structural properties which do not explicitly appear in the literature, e.g. that there is the relative presentation $\langle G, \textbf{x}: \textbf{u}= \epsilon(\textbf{u}) \rangle$ for $\hat{G}$. Thus there is a natural map $\epsilon: G \rightarrow \hat{G}$ and we may consider the associated problems (e.g. injectivity, finiteness). As an application, we investigate the class of groups $\mathcal{G}(\mathcal{B})$ obtained from substituting a presentation of the trivial group into a deficiency one group presentation for $\mathbb{Z}$. Our results show $\mathcal{G}(\mathcal{B})$ is a proper subset of the class of 2-knot groups and properly contains all classical knot groups. A subfamily is investigated which includes the group of the trefoil knot and uses Higman's presentations for the trivial group.

\end{abstract}

\section{Introduction}

For $n>0$ and $\mathbf{x}= \{ x_0, x_1, \dots, x_{n-1} \}$, let $F(\mathbf{x})$ denote the free group with generating set $\mathbf{x}$. A word $w \in F(\mathbf{x})$ is called composite in $\textbf{s} \subseteq F(\textbf{x})$ if it is comprised of subwords $\textbf{s}=\{w_0, w_1, \dots, w_{k-1} \}$ for some $k>0$. Let $\textbf{u}=\{u_0, u_1, \dots, u_{k-1} \}$ and for $F(\textbf{u})$ the free group on $\textbf{u}$, define the homomorphism $\varepsilon: F(\mathbf{u}) \rightarrow F(\mathbf{x})$ by $\varepsilon(u_i)=w_i$ for $0 \leq i< k$. Thus $w$ is in the image of $\epsilon$ and there exists $v\in F(\textbf{u})$ such that $\epsilon(v)=u$. In this situation, we say $w$ admits a $\textbf{u}$-substitution in the substitution set $\epsilon(\textbf{u})=\{w_0, w_1, \dots, w_{k-1} \}$ and call $\epsilon$ a substitution homomorphism. A group presentation $\mathcal{P}=\langle \mathbf{x}: \mathbf{r} \rangle$ with $\mathbf{r}\subseteq F(\mathbf{x})$ is called composite if there exists a pair $(\mathcal{G}, \epsilon)$ where $\mathcal{G}=\langle \textbf{u}: \textbf{v} \rangle$ and $\epsilon: F(\textbf{u})\rightarrow F(\textbf{x})$ is such that $\epsilon(\textbf{v})=\textbf{r}$. Equivalently, given arbitrary $\mathcal{G}=\langle \textbf{u}: \textbf{v} \rangle$ and $\mathcal{H}= \langle \textbf{x}: \textbf{s} \rangle$ with $\textbf{s} \subseteq F(\textbf{x})$ such that $|\textbf{s}|=|\textbf{u}|$, presentations for the groups $G$ and $H$, respectively, then the composition $\mathcal{P}$ of $\mathcal{G}$ with $\mathcal{H}$ is defined in terms of the substitution $\epsilon: F(\textbf{u}) \rightarrow F(\textbf{x})$ with $\epsilon(\textbf{u})=\textbf{s}$ where as before $\mathcal{P}= \langle \textbf{x}: \epsilon(\textbf{v}) \rangle$. Our goal in this paper is to investigate the group $\hat{G}$ given by a composite presentation $\mathcal{P}$, regarded as a construction having input an arbitrary pair $(\mathcal{G}, \epsilon)$ (equivalently $(\mathcal{G}, \mathcal{H} )$). 

Composite words and presentations have a rich history. Indeed, Nielsen transformations are regarded as substitutions, composite one relator presentations were studied by Magnus \cite[Theorem N5]{MagKarSol}, and Fox \cite{FoxFDI} discovered a chain rule for composite words in his free differential calculus. At the same time, there does not appear to be a systematic treatment of composite group presentations in the literature \footnote{ The same conclusion was reached in \cite[p. 188]{MyaAC}.}. We hope to fill this void in the present paper by developing some of the theory and relating composite presentations to some well studied problems in group theory, where throughout this paper we will survey the situation as best as possible, too. Our starting point is the following which may be well known but does not explicitly appear elsewhere.

\begin{Theorem} \label{Theorem: intro short exact} Let $\epsilon: F(\mathbf{u}) \rightarrow F(\mathbf{x})$ be a homomorphism and $G \cong \langle \mathbf{u}: \mathbf{v} \rangle$ with $\mathbf{v} \subseteq F(\mathbf{u})$. Then for $\hat{G}$ the group given by the composite presentation $\mathcal{P}= \langle \mathbf{x}: \epsilon(\mathbf{v}) \rangle$, the substitution homomorphism induces a map $\varepsilon:G \rightarrow \hat{G}$ and there is an exact sequence of homomorphisms
\begin{align} \label{Eqn: Short exact intro} 
1 \rightarrow \langle \langle \epsilon(G) \rangle \rangle \xrightarrow{i} \hat{G} \xrightarrow{\pi} H \rightarrow 1
\end{align}
where $H\cong \langle \mathbf{x}: \epsilon(\mathbf{u}) \rangle$ and $\langle \langle \epsilon(G) \rangle \rangle$ is the normal closure of $\epsilon(G)$ in $\hat{G}$.
\end{Theorem}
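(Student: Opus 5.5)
The plan is to build everything from the universal property of presentations, working in $F(\mathbf{x})$ and passing to quotients. Write $\hat{G}=F(\mathbf{x})/N_{\mathbf{r}}$, where $N_{\mathbf{r}}=\langle\langle \epsilon(\mathbf{v})\rangle\rangle_{F(\mathbf{x})}$. Write $H=F(\mathbf{x})/N_{\mathbf{s}}$, where $N_{\mathbf{s}}=\langle\langle \epsilon(\mathbf{u})\rangle\rangle_{F(\mathbf{x})}$. Finally write $G=F(\mathbf{u})/N_{\mathbf{v}}$.

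First, I will show that $\epsilon$ descends to a map $G\to\hat{G}$. Compose $\epsilon$ with the quotient $F(\mathbf{x})\to\hat{G}$. Each relator $v\in\mathbf{v}$ maps to $\epsilon(v)$, which is a defining relator of $\mathcal{P}$ and so is trivial in $\hat{G}$. Hence $N_{\mathbf{v}}$ lies in the kernel, and we obtain a well-defined homomorphism $\varepsilon:G\to\hat{G}$ with $u_iN_{\mathbf{v}}\mapsto \epsilon(u_i)N_{\mathbf{r}}$. Its image $\epsilon(G)$ is the subgroup of $\hat{G}$ generated by the images of the words $\epsilon(u_i)$.

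Second, I will construct $\pi$. Each $\epsilon(v)$ with $v\in\mathbf{v}$ is a word in the elements $\epsilon(u_i)$, so $\epsilon(v)\in \epsilon(F(\mathbf{u}))\subseteq N_{\mathbf{s}}$. This gives $N_{\mathbf{r}}\subseteq N_{\mathbf{s}}$. Therefore the identity on $\mathbf{x}$ induces a surjection $\pi:\hat{G}\to H$. Its kernel is $N_{\mathbf{s}}/N_{\mathbf{r}}$ by the third isomorphism theorem.

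Third, I will identify this kernel with $\langle\langle \epsilon(G)\rangle\rangle$, taking $i$ to be the inclusion; exactness at the first two terms is then automatic. The image of $N_{\mathbf{s}}$ in $\hat{G}$ is the normal closure in $\hat{G}$ of the images of the $\epsilon(u_i)$, since normal closures are preserved under surjective homomorphisms. The normal closure of a set equals the normal closure of the subgroup it generates, and that subgroup is $\epsilon(G)$. So $\ker\pi=\langle\langle \epsilon(G)\rangle\rangle$, and the sequence (\ref{Eqn: Short exact intro}) is exact.

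There is no real obstacle here. The only points requiring care are these: the notation $\epsilon(G)$ means the image of the induced map, which is not necessarily an isomorphic copy of $G$; and the kernel equality needs the standard fact that the image of a normal closure under a surjection is the normal closure of the image.
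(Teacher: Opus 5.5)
Your argument is correct, but it takes a different route from the paper's proof of this theorem.

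The paper deduces the theorem from Lemma~\ref{Lemma: relative hat G}. Tietze transformations adjoin generators $\mathbf{u}$ with relations $u_i=\epsilon(u_i)$ and rewrite each relator $\epsilon(v)$ back as $v$. This gives the relative presentation $\langle G,\mathbf{x}:\epsilon(u_i)u_i^{-1}\rangle$ of $\hat G$. Killing the normal closure of the coefficient group $G$ in this presentation visibly leaves $\langle \mathbf{x}:\epsilon(\mathbf{u})\rangle$.

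You instead stay inside $F(\mathbf{x})$. You note that $\langle\langle\epsilon(\mathbf{v})\rangle\rangle_{F(\mathbf{x})}\le\langle\langle\epsilon(\mathbf{u})\rangle\rangle_{F(\mathbf{x})}$ because $\epsilon(\mathbf{v})\subseteq\epsilon(F(\mathbf{u}))$. You then identify $\ker\pi$ using the third isomorphism theorem and the fact that normal closures are preserved under surjections. This is essentially the argument the paper gives only later, in Proposition~\ref{Prop: free subgroups}.

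Your version is more elementary. It also yields the explicit description $\langle\langle\epsilon(G)\rangle\rangle_{\hat G}\cong\langle\langle\epsilon(\mathbf{u})\rangle\rangle_{F(\mathbf{x})}/\langle\langle\epsilon(\mathbf{v})\rangle\rangle_{F(\mathbf{x})}$ at no extra cost.

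The paper's version produces the relative presentation $\hat{\mathcal{G}}$ along the way. The rest of the paper relies on it: the cellular model $L$, the asphericity results, and recasting injectivity of $\varepsilon$ as a question about equations over $G$.
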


The proof of the theorem and other preliminary results appear in Section \ref{Section: background}. In short, the result follows from adjoining the generators $\textbf{u}$ to $\mathcal{P}$ with defining relations $\textbf{u}=\epsilon(\textbf{u})$, and then rewriting $\textbf{r}$ in terms of $\textbf{u}$ to obtain $\textbf{v}$. This also provides the relative presentation
\begin{align} \label{Eqn: rel intro}
\hat{\mathcal{G}} = \langle G, \mathbf{x}: \epsilon(u_i)u_i^{-1}, \ 0\leq i< k\rangle
\end{align}
for $\hat{G}$ where the substitution homomorphism is the natural map $\epsilon: G \rightarrow \hat{G}$. Among other things, this allows us to frame injectivity of the induced map in terms of the adjunction problem from equations over groups (see Section \ref{Section: eqn groups}). 

An analysis of the normal closure of $\epsilon(G)$ in $\hat{G}$ appears in Section \ref{Section: The normal closure G in hat G}. The difference between $\epsilon(G)$ and its normal closure in $\hat{G}$ is hard to describe, enough so that the group $\hat{G}$ has interesting triviality and finiteness problems. Indeed, if we ask whether $\hat{G}$ is nontrivial whenever $G$ is nontrivial, then under certain circumstances we are considering the Kervaire conjecture and related problems (e.g. for $k>1$ adjunctions). Experimentation in GAP \cite{GAP} suggests that even if $H$ is trivial and $G$ is finite cyclic, then the group $\hat{G}$ is typically infinite. A small number of finite and nontrivial groups $\hat{G}$ were found using GAP and these are recorded in Subsection \ref{Subsection: finiteness} on the finiteness problem. We also find endomorphisms $\Phi_{\epsilon} \in \mathrm{End}(F(\textbf{x}))$ where $\hat{G}_a \cong \langle \textbf{x}: \Phi_{\epsilon}^a (\textbf{x}) \rangle$ is finite for all $a>0$. Incidentally, such a substitution $\Phi_{\epsilon}$ is an endomorphism of $F(\textbf{x})$ which fails to be surjective, but has the property that the subgroup normally generated by $\Phi_{\epsilon}^a(\textbf{x})$ has finite index in $F(\textbf{x})$ for all $a>0$.

We define the cellular model $L$ for the relative presentation $\hat{\mathcal{G}}$ in Subsection \ref{Subsection top and asph}. This puts us in position to use the asphericity hypothesis for a relative presentation (e.g. \cite{BogWilEdjAsph}). Asphericity of $L$ implies, for example, that $G$ embeds in $\hat{G}$. In forthcoming work the author investigates an essential map $S^2 \rightarrow L$ that arises when a substitution is not aspherical, i.e. a representative of a nontrivial element in the homotopy group $\pi_2(L,K)$ for $K$ a $K(G,1)$.

As as application, in Section \ref{Subsection: normal closure} we introduce the class of groups $\mathcal{G}(\mathcal{B})$ defined in terms of those composite presentations $\mathcal{B}$ obtained from composing a deficiency one presentation $\mathcal{G}$ for $G=\mathbb{Z}$ with a presentation $\mathcal{H}$ of the trivial group $H=1$. The resulting finitely presented groups have deficiency one, weight one (are normally generated by one element), and have infinite cyclic abelianization. Thus $\mathcal{G}(\mathcal{B})$ are the groups of 2-knots \cite{KerCond}. It is known that there exist 2-knot groups of deficiency zero, hence $\mathcal{G}(\mathcal{B})$ is a proper subset of the 2-knot groups. On the other hand, it is easy to see that all deficiency one Wirtinger presentations admit such a substitution, so $\mathcal{G}(\mathcal{B})$ contains all (classical) 1-knot groups. A specific family $Q(k,l)$ of groups related to the class $\mathcal{G}(\mathcal{B})$ are investigated in Section \ref{Section: QKL} using Higman's presentations $\mathcal{Q}$ for the trivial group \cite{HigInf}. We show this family contains the knot of the trefoil, in addition to 2-knot groups which are not 1-knot groups. Hence $\mathcal{G}(\mathcal{B})$ is properly contained between the classes of $1$-knot groups and $2$-knot groups.

Cyclically presented groups demonstrate the situation in an efficient way and we will use them extensively in our applications (see Subsection \ref{Subsection: cyc pres}). In this context, the idea of studying $\hat{G}=G_n(v \circ u)$ via $G=G_n(v)$ is introduced in \cite{BogShi} and arose in classifying finiteness and shift dynamics for positive words of length four in \cite{BogParFour}. To the best of the author's knowledge, \cite{BogShi} is the first to consider the embedding $G\rightarrow \hat{G}$ using the theory of equations over groups. The examples in Section \ref{Section: QKL} and some of the general results were originally stated in the special case of cyclically presented groups in the author's dissertation \cite{McDis}, and he would like to thank his adviser W. A. Bogley for his help then and now.

The paper is arranged as follows. Section \ref{Section: background} begins with some additional background and references. This section also contains preliminaries including the relative presentation $\hat{\mathcal{G}}$ for $\hat{G}$, the cellular models for the group presentations and asphericity, and definitions and results concerning cyclically presented groups. Section \ref{Section: The normal closure G in hat G} is on the normal closure of $\epsilon(G)$ in $\hat{G}$. Here we also consider the finiteness problem for $\hat{G}$, introduce the class of groups $\mathcal{G}(\mathcal{B})$ of weight one, and also consider the iterated compositions $\hat{G}_a \cong \langle \textbf{x}: \Phi^a (\textbf{x}) \rangle$. Section \ref{Section: eqn groups} focuses on the substitution homomorphism $\epsilon$ and contains the applications to equations over groups. The surjectivity problem is also considered and this has applications to presentations of the trivial group, too. Finally, the family of groups $Q(k,l)$ are defined and investigated in Section \ref{Section: QKL}; we show there is also a subfamily of infinite groups generated by involutions.

\section{Background and preliminary results} \label{Section: background}

We begin with some additional background and known results. When there are $k=n$ substitutions and the free groups $F(\mathbf{u})$ and $F(\mathbf{x})$ have the same rank, the substitution homomorphism may be regarded as an endomorphism $\Phi_{\epsilon}$ of $F(\mathbf{x})$, where $\Phi_\epsilon=\epsilon \circ \mathrm{Id}$ and $\mathrm{Id}:F(\mathbf{x})\rightarrow F(\mathbf{u})$ is the identity map given by $x_i \mapsto u_i$ for $0\leq i<n$. Here we regard $\mathbf{v}$ as in the alphabet $\mathbf{x}$ and present $G$ by $\mathcal{G}= \langle \textbf{x}: \textbf{v} \rangle$. It has been known for some time that substitutions $\Phi_{\epsilon}$ correspond to endomorphisms of $F(\mathbf{x})$, where in the study of Nielsen transformations it is a classical result \cite[Lemma 3.3]{MagKarSol} that a substitution $\Phi_{\epsilon}$ which is an automorphism of $F(\mathbf{x})$ induces an automorphism of an arbitrary group $G=\hat{G}$ given by $\mathcal{G}$ if an only if 
\begin{align}
&\Phi_{\epsilon} (v) = 1_G, \ \mathrm{for \ all} \ v\in \mathbf{v}, \ \mathrm{and} \label{Condition 1} \\
&\Phi_{\epsilon}^{-1}(v) = 1_G, \ \mathrm{for \ all} \ v \in \mathbf{v}, \label{Condition 2}
\end{align}
where $\Phi_{\epsilon}^{-1}$ is the substitution $x_i \mapsto g_i$ with $g_i \in F(\mathbf{x})$ such that $\Phi_{\epsilon} (g_i)=x_i$ for $0 \leq i < n$. Condition (\ref{Condition 1}) provides that $\Phi_{\epsilon}$ induces an endomorphism of $G$, and condition (\ref{Condition 2}) corresponds to injectivity (the induced map of a free group automorphism is always surjective). For an arbitrary pair $(\mathcal{G}, \Phi_{\epsilon})$, conditions (\ref{Condition 1}) and (\ref{Condition 2}) will hold true in $G$ just as often as the relations $\textbf{v}$ hold in $\hat{G}$, which is to say very rarely. Thus in forming $\hat{G}$ and relaxing the requirement that $\Phi_{\epsilon}$ be an automorphism of $F(\mathbf{x})$, we have put ourselves in a position where given the pair $(\mathcal{G}, \Phi_{\epsilon})$ there is always the group $\hat{G}$ with induced homomorphism $\Phi_{\epsilon}: G \rightarrow \hat{G}$.

The idea of composing two presentations $\mathcal{G}, \mathcal{H}$ of the trivial group is well-known, for then $\hat{G}$ must be trivial, too. The earliest such reference to the author's knowledge is \cite{NeuPro}, where B. H. Neumann observes that the presentation $\mathcal{Q}_n$ obtained by successively composing Higman's presentation $\mathcal{Q}=\langle x_0, x_1, x_2: x_ix_{i+1}x_i^{-1}x_{i+1}^{-2}, \ i=0,1,2 \rangle$ of the trivial group \cite{HigInf} with itself $n$-times produces complex group presentations which can be used to test a computer's ability to perform coset enumeration. Then \cite{BurBal} is the first of several papers (e.g. \cite{HavTri}, \cite{MyaAC}) that discuss composing balanced presentations of the trivial group as it relates to the Andrews- Curtis conjecture. Balanced presentations of the trivial group have well known applications to topology, and there the presentations $\mathcal{Q}_n$ have been studied, too. In \cite{HowAsphRib}, Howie proves $\mathcal{Q}_n$ does not yield a counterexample to Whitehead's asphericity conjecture using the method of deleting a relator from a balanced presentation of the trivial group. 

There are a considerable number of results in one-relator group theory using composite presentations. We do not attempt to summarize them all here, nor do we make a serious attempt to consider the usual problems. For a classical result, see Meskin \cite{MesRF}  who considers residual finiteness for relators of the form $u^{-1}v^luv^m$ for $u,v \in F(\textbf{x})$; more recently, in \cite{BaumMillRF} it is shown that most one relator groups with the form $u^{u^v}=u^2$ are not residually finite. See also Pride \cite{PriRF} who considers the problem for composite presentations with a certain unique max-min property. Small cancellation theory may be used to investigate $\hat{G}$ if $\epsilon(u_i)$ is short relative to the word length of the relations in $ \epsilon(\mathbf{v})$. This was done in \cite{JuhExt} and \cite{JuhSpell} for composite presentations satisfying certain combinatorial and small cancellation conditions.

Another classical result from one relator group theory states that if for $u,  \bar{u} \in F(\mathbf{x})$ the groups given by $\langle \mathbf{x}: u^a \rangle$ and $\langle \mathbf{x}: \bar{u}^a \rangle$ are isomorphic, then so are the groups given by $\langle \mathbf{x}: u \rangle $ and $\langle \mathbf{x}: \bar{u} \rangle$ \cite[Corollary 4.13.1]{MagKarSol}. It is interesting to frame this result as follows. The presentation $\mathcal{G}= \langle \mathbf{u}: u_0^a \rangle$ has the property that $\hat{G}$ determines $H$. Compare with \cite[Section 3.7]{MagKarSol} and the remark following Theorem N5. From this perspective, our examples of finite groups in Section \ref{Subsection: finiteness} can be used to show that $\hat{G}$ is not an invariant of $(\mathcal{G}, H)$ or $(G, \epsilon)$ in general.


\subsection{Preliminaries and the relative presentation $\hat{\mathcal{G}}$} \label{Subs: relative}


For $n>0$ and $\mathbf{x}= \{x_0, x_1, \dots, x_{n-1} \}$, let $\mathbf{r} \subseteq F(\mathbf{x})$ and $\mathcal{P} = \langle \mathbf{x}: \mathbf{r} \rangle$ be a composite presentation for the group $\hat{G}$. Thus there exists a substitution homomorphism $\epsilon: F(\mathbf{u}) \rightarrow F(\mathbf{x})$ with $k=|\textbf{u}|>0$ and a group presentation $\mathcal{G}= \langle \mathbf{u}: \mathbf{v} \rangle$ such that $\mathbf{r}=\epsilon(\mathbf{v})$. Our definition implies every group presentation is composite using $k=n$ substitutions and the identity homomorphism $u_i \mapsto x_i$ for $0 \leq i<n$. To distinguish from this and other permutations of the generating sets, we say a word $w \in F(\mathbf{x})$ admits a nontrivial $\textbf{u}$-substitution if $w = \epsilon(v)$ for some $v \in F(\mathbf{u})$ is such that the image $\epsilon(u_j)$ of at least one generator $u_j \in \textbf{u}$ appearing in the word $v$ has freely reduced word length greater than one.

\begin{Example}
\begin{itemize}

\item[ (i) ] \label{Example: empty word composite} Define $\epsilon$ by $u_i \mapsto x_i x_{i+1}^{-1}$ for $0 \leq i <k= n$, with subscripts taken modulo $n$, and $v=u_0 u_1 \dots u_{n-1}$. Then $\epsilon(v) =1 \in F(\mathbf{x})$ and so the empty word admits a nontrivial $\textbf{u}$-substitution.

\item[ (ii) ] A word $w\in F(\mathbf{x})$ that is a proper power has the decomposition $w=u^a$ for $a>1$, where $\epsilon$ defined by $u_0 \mapsto u \in F(\mathbf{x})$ and $v=u_0^a \in F(u)$ satisfies $\epsilon(v)= w$.

\item[ (iii) ] If $v\in F(\textbf{u})$ and $\epsilon(\textbf{u}) \subseteq F(\textbf{x})$ are all words with positive exponents, then their reduced length satisfies $|\epsilon(v)|=|u|_{\mathbf{x}} |v|_{\mathbf{u}}$.

\item[ (iv) ] If $\Phi \in \mathrm{End}(F(\textbf{x}))$ is periodic, i.e. there exists $k>0$ such that $\Phi^k(x_i)=x_i$ for all $x_i \in \textbf{x}$, then $\Phi(\textbf{v})=\textbf{v}$ and $G \cong \hat{G}_k = \langle \textbf{x}: \Phi^k(\textbf{v}) \rangle$.

\end{itemize}
\end{Example}

Variations of Example \ref{Example: empty word composite} $(i)$ arise from the fact that $\epsilon(v) \in F(\mathbf{x})$ is a decomposition of the empty word if and only if $v$ is in the kernel of a substitution $\epsilon: F(\textbf{u}) \rightarrow F(\textbf{x})$. Example \ref{Example: empty word composite} (ii) in the context of one-relator groups is considered in Example \ref{Example: one relator}.

We now show how the group $\hat{G}$ may be obtained from $G$ by adjoining generators and defining relations. The resulting relative presentation $\hat{\mathcal{G}}$ for $\hat{G}$ has many consequences. Theorem \ref{Theorem: intro short exact} follows immediately, and it will allow us to frame injectivity of $\epsilon: G \rightarrow \hat{G}$ in terms of the adjunction problem from equations over groups. Furthermore, the topological analogue of obtaining $\hat{G}$ from $G$, in terms of cellular models and the asphericity hypothesis, is discussed in Subsection \ref{Subsection top and asph}.

\begin{Lemma} \label{Lemma: relative hat G} Suppose $\hat{G}$ is given by the group presentation $\mathcal{P}= \langle \mathbf{x}: \mathbf{r} \rangle$ with $\mathbf{r}\subseteq F(\mathbf{x})$ and there exists $\mathcal{G}= \langle \mathbf{u}: \mathbf{v} \rangle$ and $\epsilon: F(\mathbf{u}) \rightarrow F(\mathbf{x})$ such that $\epsilon(\mathbf{v})= \mathbf{r}$. Then the group $\hat{G}$ is given by the relative presentation
\begin{align*} 
\hat{\mathcal{G}} = \langle G, \mathbf{x}: \epsilon(u_i)u_i^{-1}, \ u_i \in \textbf{u} \rangle
\end{align*}
where the natural map $G \rightarrow \hat{G}$ is induced by the substitution homomorphism $\epsilon$.
\end{Lemma}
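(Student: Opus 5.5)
The plan is to compare the two presentations by Tietze transformations, working in the free product $G * F(\mathbf{x})$. By definition, the relative presentation $\hat{\mathcal{G}}$ presents the quotient
\[
(G * F(\mathbf{x}))/N, \qquad N=\langle\langle \epsilon(u_i)u_i^{-1} : u_i\in\mathbf{u}\rangle\rangle .
\]
Here $u_i$ stands for its image in $G$ and $\epsilon(u_i)$ for a word in $F(\mathbf{x})$. Since $G\cong\langle \mathbf{u}:\mathbf{v}\rangle$, the free product has the ordinary presentation $\langle \mathbf{u},\mathbf{x}:\mathbf{v}\rangle$. Hence $\hat{\mathcal{G}}$ is equivalent to the ordinary presentation
\[
\mathcal{P}' = \langle \mathbf{u}, \mathbf{x} : \mathbf{v},\ \epsilon(u_i)u_i^{-1}\ (u_i\in\mathbf{u})\rangle .
\]
It therefore suffices to show that $\mathcal{P}'$ and $\mathcal{P}=\langle \mathbf{x}:\epsilon(\mathbf{v})\rangle$ define isomorphic groups, via an isomorphism that is the identity on $\mathbf{x}$.

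The key steps are as follows. First, in $\mathcal{P}'$ the relations $u_i=\epsilon(u_i)$ hold. By induction on word length, every $w\in F(\mathbf{u})$ then satisfies $w=\epsilon(w)$ in the group defined by $\mathcal{P}'$, because both sides are images of the homomorphisms $\mathrm{id}$ and $\epsilon$, which agree on generators modulo the relations. Consequently each relator $v\in\mathbf{v}$ may be replaced by $\epsilon(v)$; this is a Tietze move, since $v$ and $\epsilon(v)$ are consequences of each other modulo the relators $\epsilon(u_i)u_i^{-1}$. This gives
\[
\langle \mathbf{u},\mathbf{x}: \epsilon(\mathbf{v}),\ u_i=\epsilon(u_i)\rangle .
\]
Second, each generator $u_i$ now appears in exactly one relator, $u_i=\epsilon(u_i)$, whose right-hand side lies in $F(\mathbf{x})$, and $u_i$ does not occur in the relators $\epsilon(\mathbf{v})$. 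So we may delete $u_i$ together with that relator. What remains is $\langle \mathbf{x}:\epsilon(\mathbf{v})\rangle=\mathcal{P}$, and the resulting isomorphism fixes $\mathbf{x}$. (If $\mathbf{u}$ is infinite, the same moves apply simultaneously, or one can argue directly with the homomorphisms defined below.)

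For the final claim, trace the composite $G\to G*F(\mathbf{x})\to\hat G$. It sends the class of $u_i$ to the class of $u_i$, which equals $\epsilon(u_i)$ under the isomorphism above. So the natural map is $[w]\mapsto[\epsilon(w)]$, the map induced by $\epsilon$; it is well defined because $\epsilon(\mathbf{v})=\mathbf{r}$ holds in $\hat G$.

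The main point of care is the first step: $v$ is only rewritten to $\epsilon(v)$ modulo the normal closure of the relators $\epsilon(u_i)u_i^{-1}$, so the induction must be carried out in the quotient. To be safe, I would write down the explicit maps instead of relying only on Tietze moves. Define $\alpha:\mathcal{P}\to\mathcal{P}'$ by $x\mapsto x$; it is well defined since $\epsilon(v)=v=1$ in $\mathcal{P}'$. Define $\beta:\mathcal{P}'\to\mathcal{P}$ by $x\mapsto x$ and $u_i\mapsto\epsilon(u_i)$; it is well defined since $\beta(v)=\epsilon(v)\in\mathbf{r}$ and $\beta(\epsilon(u_i)u_i^{-1})=1$. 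Finally, check that $\alpha$ and $\beta$ are mutually inverse on generators, using $u_i=\epsilon(u_i)$ in $\mathcal{P}'$.
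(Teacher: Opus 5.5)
Your proof is correct and is essentially the paper's argument. Both pass between $\mathcal{P}$ and the lifted presentation $\langle \mathbf{u},\mathbf{x}:\mathbf{v},\ \epsilon(u_i)u_i^{-1}\rangle$ by Tietze moves that adjoin (or delete) the $u_i$ with relations $u_i=\epsilon(u_i)$ and rewrite $\mathbf{v}\leftrightarrow\epsilon(\mathbf{v})$ modulo those relations; you simply run the moves in the opposite direction, and you additionally write down the inverse isomorphisms $\alpha,\beta$ and identify the natural map $G\to\hat G$ as $[w]\mapsto[\epsilon(w)]$, which the paper leaves implicit in ``from which the result follows.''
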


\begin{proof} Let $m=|\textbf{r}|$ be the number of relators in $\mathcal{P}$ and $k=|\textbf{u}|$ the number of substitutions. Using Tietze transformations to adjoin the generators $\textbf{u}$ with defining relations $u_i=\epsilon(u_i)$ for $0 \leq i <k$, and then rewriting $r_j$ in the substitution set $\epsilon(\mathbf{u})$ to get $v_j$ for $0 \leq j< m$, via multiplication by the relators $\epsilon(\textbf{u})\textbf{u}^{-1}$ and inversion and cylic permutation of the $\textbf{r}$, we have
\begin{align*}
\mathcal{P} &= \langle x_0, \dots , x_{n-1} : r_0, \dots, r_{m-1} \rangle \\
& \cong \langle x_0, \dots, x_{n-1}, u_0, \dots, u_{k-1}: r_0, \dots, r_{m-1}, \epsilon(u_i)u_i^{-1}, \ 0 \leq i< k \rangle \\
&\cong \langle x_0, \dots, x_{n-1}, u_0, \dots, u_{k-1}: v_0, \dots, v_{m-1},\epsilon(u_i)u_i^{-1}, \ 0 \leq i< k \rangle
\end{align*}
from which the result follows. \qed
\end{proof}

The substitution homomorphism may map a proper subset of the generators $\mathbf{u}$ of $F(\mathbf{u})$ onto $\mathbf{x}$ and still yield useful information about $\hat{G}$. Call a presentation $\mathcal{P}$ partially composite if it is composite in $(\mathcal{G}, \epsilon )$ with $\epsilon(u_i) \in \mathbf{x}$ for some but not all $u_i \in \textbf{u}$. For a partially composite presentation, in terms of the relative presentation $\hat{\mathcal{G}}$ of $\hat{G}$ from Lemma \ref{Lemma: relative hat G}, any adjoined relation of the form $\epsilon(u_i)=x_j$, along with the corresponding generator $x_ j \in \mathbf{x}$, may be deleted from $\hat{\mathcal{G}}$ upon replacing any instances of $x_j$ with $u_i$ in the adjoined relations. This reduces $\hat{\mathcal{G}}$ as we show now. Compare with Example \ref{Example: not injective E} and Lemma \ref{Lemma: E relative}.

\begin{Example} \label{Example: partial comp} Let $\mathcal{P}=\langle a, x: a^3, (xaxax^{-1}a^{-2})^2 \rangle$ where $\mathbf{x}=\{a, x \}$. First consider the case where we regard $\mathcal{P}$ as a composite presentation with $k=2$ substitutions. For $G$ the group presented by $\mathcal{G}= \langle A, U: A^3, U^2 \rangle$ where $\mathbf{u}=\{A, U \}$ and $\epsilon: F(\mathbf{u}) \rightarrow F(\mathbf{x})$ is defined by $A \mapsto a$ and $U \mapsto xaxax^{-1}a^{-2}$, we have 
$$\hat{\mathcal{G}}= \langle G, a, x: aA^{-1}, xaxax^{-1}a^{-2}U^{-1} \rangle .$$
Now $\mathcal{P}$ is also partially composite and observe the adjoined generator $a$ and relator $aA^{-1}$ may be deleted so that $\hat{G} \cong \langle G, x: xAxAx^{-1}A^{-2}U^{-1} \rangle$.
\end{Example}


\subsection{Cellular models and asphericity} \label{Subsection top and asph}

Let $\mathcal{P}=\langle \mathbf{x}: \mathbf{r} \rangle$ where $\mathbf{x}=\{ x_0, \dots, x_{n-1} \}$ and $\mathbf{r}=\{ r_0, \dots, r_{m-1} \} \subseteq F(\mathbf{x})$ for $n,m>0$, and $G(\mathcal{P})$ the group presented by $\mathcal{P}$. The 2-dimensional cellular model $K=K_{\mathcal{P}}$ of the group presentation $\mathcal{P}$ has 1-skeleton $K^{(1)}$ a wedge of $n$ circles, where each circle has minimal CW-complex structure $S^1_x=c^0 \cup c^1_x$ and is indexed by the generators $x \in \mathbf{x}$. Orient each loop $S^1_x$ with counterclockwise positive, and attach 2-cells $c_{r}^{2}$ to $K^{(1)}$, for each $r \in \mathbf{r}$, via the attaching map $\dot{\phi}_{r}:S^{1}\rightarrow K^{(1)}$ which spells out the word $r$ according to the labels given to $K^{(1)}$. The resulting CW-complex $K$ has fundamental group isomorphic to $G(\mathcal{P})$ and is called the cellular model of $\mathcal{P}$. Furthermore, the presentation $\mathcal{P}$ is called aspherical if the second homotopy group $\pi_2(K)$ is trivial.

Next we define the cellular model and asphericity for a relative presentation as in \cite{BogWilEdjAsph}. For $n,m>0$, let $H$ be a group, $\mathbf{y}=\{y_0, \dots, y_{n-1} \}$ a set disjoint from $H$, and $\mathbf{s}=\{s_0, \dots, s_{m-1} \}$ a set of cyclically reduced elements in $H \ast F(\mathbf{y})$.  The relative presentation $\mathcal{Q}=\langle H, \mathbf{y}: \mathbf{s} \rangle$ determines the group $Q=H \ast F(\mathbf{y}) / N$, for $N$ the normal closure of $\mathbf{s}$ in $H\ast F(\mathbf{y})$. Let $K$ be an Eilenberg-MacLane space of type $K(H,1)$. Form
$$L=K \vee \underset{y\in\mathbf{y}}{\bigvee} S^1_y \cup \underset{s\in\mathbf{s}}{\bigcup} c_s^2,$$
where the 2-cells $c_{s}^2$ are attached via $\dot{\phi}_{s}:S^{1}\rightarrow L^{(1)}$ spelling out $s$ in terms of the labels along the (oriented) wedge of circles. The resulting space $L$ is defined to be the cellular model of $\mathcal{Q}$, and the relative presentation $\mathcal{Q}$ is called aspherical if the second homotopy group $\pi_2(L,K)$ is trivial. Note that the definition of asphericity of $\mathcal{Q}$ is equivalent to assuming the natural map $H \rightarrow G$ is injective and that $K$ is a $K(Q,1)$ complex \cite[Lemma 2.3]{BogWilEdjAsph}.

Returning to our situation, the group $\hat{G}$ is given by the composite presentation $\mathcal{P}$, and also the relative presentation $\hat{\mathcal{G}}$ from Lemma \ref{Lemma: relative hat G}. We have the following concerning asphericity.

\begin{Theorem} \label{Theorem: asph rel asph asph} The 2-dimensional cellular model of the composite group presentation $\mathcal{P}=\langle \mathbf{x}: \mathbf{r} \rangle$ of $\hat{G}$ is homotopy equivalent to a space obtained from the cellular model of $\mathcal{G}=\langle \mathbf{u}: \mathbf{v} \rangle$ by adjoining cells in dimensions 1 and 2. If both the presentation $\mathcal{G}$ of $G$ and the relative presentation $\hat{\mathcal{G}}$ of $\hat{G}$ are aspherical, then $\mathcal{P}$ is aspherical.
\end{Theorem}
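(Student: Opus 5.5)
Start from the Tietze moves in the proof of Lemma \ref{Lemma: relative hat G} and read each one as a homotopy equivalence of 2-complexes. Let $K_{\mathcal{P}}$ be the cellular model of $\mathcal{P}$. Adjoin a generator $u_i$ together with the relator $\epsilon(u_i)u_i^{-1}$: this adds a 1-cell and a 2-cell whose boundary traverses the new 1-cell exactly once, so it is an elementary expansion and does not change the homotopy type. Next replace each relator $r_j=\epsilon(v_j)$ by $v_j$. The attaching maps of $r_j$ and $v_j$ are freely homotopic in the 1-skeleton of the expanded complex $M$ (the complex with generators $\mathbf{x}\cup\mathbf{u}$ and relators $\mathbf{r}\cup\{\epsilon(u_i)u_i^{-1}\}$): we slide $v_j$ across the 2-cells $c^2_{\epsilon(u_i)u_i^{-1}}$, replacing each occurrence of $u_i$ by $\epsilon(u_i)$. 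Since homotopic attaching maps yield homotopy equivalent complexes, $K_{\mathcal{P}}\simeq K'$, where $K'$ is the model of $\langle \mathbf{x},\mathbf{u}: \mathbf{v}, \epsilon(u_i)u_i^{-1}\rangle$. Now $K'$ contains $K_{\mathcal{G}}$ (the wedge of the $S^1_{u}$ with the cells $c^2_v$) as a subcomplex, and $K'$ is obtained from it by attaching the 1-cells $c^1_x$ and the 2-cells $c^2_{\epsilon(u_i)u_i^{-1}}$. This proves the first claim.

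For asphericity, assume $\mathcal{G}$ is aspherical. Then $K_{\mathcal{G}}$ is a $K(G,1)$, so we may take $K=K_{\mathcal{G}}$ in the definition of the cellular model $L$ of $\hat{\mathcal{G}}$. With this choice $L$ is exactly $K'$: the wedge $K_{\mathcal{G}}\vee\bigvee_x S^1_x$ with 2-cells spelling $\epsilon(u_i)u_i^{-1}$. The adjoined relators are read in $G\ast F(\mathbf{x})$ through the loops $u_i$ in $K_{\mathcal{G}}$. Asphericity of $\hat{\mathcal{G}}$ means $\pi_2(L,K)=0$, and by \cite[Lemma 2.3]{BogWilEdjAsph} this is equivalent to $G\to\hat G$ being injective and $L$ being a $K(\hat G,1)$. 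In particular $\pi_2(L)=0$. Alternatively, one can read this off the long exact sequence
\[
\pi_2(K)\to\pi_2(L)\to\pi_2(L,K),
\]
since both outer groups vanish. Hence $\pi_2(K_{\mathcal{P}})\cong\pi_2(K')=\pi_2(L)=0$, so $\mathcal{P}$ is aspherical.

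The main obstacle is to justify carefully that the relator replacement $r_j\rightsquigarrow v_j$ preserves homotopy type. The words $\epsilon(v_j)$ and $v_j$ are not related by free reduction; they become equal only modulo the new 2-cells. I would handle this with the standard lemma that attaching a 2-cell along a map that is null-homotopic, or homotopic to another attaching map, in the current complex gives homotopy equivalent spaces. Concretely, the attaching map of $c^2_{r_j}$ is homotopic to that of $v_j$ in $M$ minus the open cell $c^2_{r_j}$ itself, because the homotopy only uses the cells $c^2_{\epsilon(u_i)u_i^{-1}}$. We then perform the replacements one $j$ at a time. A secondary point is to check that the 2-cells of $\hat{\mathcal{G}}$ need not be cyclically reduced in $G\ast F(\mathbf{x})$. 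If some $\epsilon(u_i)u_i^{-1}$ fails this, I would note that the homotopy argument never used reducedness.
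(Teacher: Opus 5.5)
Your proposal is correct and follows the paper's own argument: pass by Tietze moves to $\tilde{\mathcal{P}}=\langle \mathbf{u},\mathbf{x}:\mathbf{v},\epsilon(\mathbf{u})\mathbf{u}^{-1}\rangle$, observe that its model contains $K_{\mathcal{G}}$ and coincides with $L$ when $K=K_{\mathcal{G}}$ is taken as the $K(G,1)$, and conclude $\pi_2(L)=0$ from the exact sequence $\pi_2(K)\to\pi_2(L)\to\pi_2(L,K)$. You justify the homotopy equivalences more carefully than the paper does (an elementary expansion, then replacing attaching maps by homotopic ones). One slip needs fixing: your first paragraph says $r_j$ and $v_j$ are homotopic in the 1-skeleton of $M$, but they are homotopic only in $M$ with the open cell $c^2_{r_j}$ removed, as you correctly state later.
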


\begin{proof} Observe $\mathcal{G}$ is a subpresentation of $\tilde{\mathcal{P}}=\langle \mathbf{u}, \mathbf{x}: \mathbf{v}, \epsilon(\textbf{u})\textbf{u}^{-1} \rangle$, which can be obtained from $\mathcal{P}$ via Tietze transformations as in the proof of Lemma \ref{Lemma: relative hat G}. Thus the model for $\mathcal{P}$ is homotopy equivalent to a presentation complex having subspace a model for $\mathcal{G}$. Suppose $\mathcal{G}$ is aspherical, i.e. its model $K=K_{\mathcal{G}}$ has trivial $\pi_2(K)$. Then $K$ is a $K(G,1)$ and so it may be used to construct the model $L$ for the relative presentation $\hat{\mathcal{G}}$. Thus $L$ is identically the 2-dimensional cellular model of $\tilde{\mathcal{P}}$. Now $\pi_2(L,K)$ is also trivial since we assume that $\hat{\mathcal{G}}$ is aspherical, and so by the long exact sequence in homotopy
$$\dots \rightarrow \pi_2(K) \rightarrow \pi_2(L) \rightarrow \pi_2(L,K)\rightarrow \pi_1(K) \rightarrow \dots$$
for the subspace $K \subseteq L$ we have that $\pi_2(L)$ is trivial. Thus $\tilde{\mathcal{P}}$ is aspherical and so is the homotopy equivalent model for $\mathcal{P}$, too.
\qed
\end{proof}

The proof of Theorem \ref{Theorem: asph rel asph asph} contains the presentation $\tilde{\mathcal{P}}$ for $\hat{G}$- this is the ordinary (or lifted) presentation associated with the relative presentation $\hat{\mathcal{G}}$. Let $M$ denote the cellular model for $\tilde{\mathcal{P}}$ where the cellular model $K$ for $\mathcal{G}$ is a subspace of $M$. This topological framework with subspace pair $(M,K)$ is not explicitly stated in the literature. As an application, examples for testing Whitehead's asphericity conjecture maybe be constructed using composite presentations where $\tilde{\mathcal{P}}$ is apsherical. Indeed, it is possible that $\mathcal{G}$ is not aspherical. For comparison, recall examples for testing this conjecture may also be constructed by deleting a relator from a balanced presentation of the trivial group, as was done with compositions of Higman's presentation for the trivial group in \cite{HowAsphRib}.

The following are well-known results about aspherical relative presentations tailored to our situation. For details and additional implications (e.g. cohomology results) see \cite[Theorem 2.4]{BogWilEdjAsph}.

\begin{Theorem} \label{Theorem: relative aspherical} Assume the relative presentation $\hat{\mathcal{G}}= \langle G, \mathbf{x}: \epsilon(u_i)u_i^{-1}, \ 0\leq i< k\rangle$ of $\hat{G}$ is aspherical. Then 
\begin{itemize}
\item[(i)] the substitution homomorphism $\epsilon: G \rightarrow \hat{G}$ is injective,
\item[(ii)] if $G$ is nontrivial, then each finite subgroup of $\hat{G}$ is conjugate in $\hat{G}$ to a subgroup of $\epsilon(G)$, and
\item[(iii)] if both $G$ and $H\cong \langle \mathbf{x}: \epsilon(\textbf{u}) \rangle$ are nontrivial, then $\hat{G}$ is infinite.
\end{itemize} 
\end{Theorem}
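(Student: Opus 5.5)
The plan is to deduce all three items from the standard consequences of asphericity for relative presentations, as recorded in \cite[Lemma 2.3, Theorem 2.4]{BogWilEdjAsph}. We specialize to $H=G$, $\mathbf{y}=\mathbf{x}$ and $\mathbf{s}=\{\epsilon(u_i)u_i^{-1}\}$. By Lemma \ref{Lemma: relative hat G}, the group determined by $\hat{\mathcal{G}}$ is $\hat{G}$, and the natural map $G\rightarrow \hat{G}$ is the one induced by $\epsilon$. If a relator $\epsilon(u_i)u_i^{-1}$ is not cyclically reduced in $G\ast F(\mathbf{x})$, we first replace it by a cyclic conjugate that is; this changes neither the group nor the cellular model $L$ up to homotopy.

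For (i), take $K$ a $K(G,1)$ and $L$ the cellular model of $\hat{\mathcal{G}}$. Then $\pi_1(K)=G$ and $\pi_1(L)=\hat{G}$, and the inclusion $K\subseteq L$ induces $\epsilon$. The long exact homotopy sequence of the pair $(L,K)$ contains the segment $\pi_2(L,K)\rightarrow \pi_1(K)\rightarrow \pi_1(L)$. Since $\pi_2(L,K)=0$ by hypothesis, $\epsilon$ is injective. Equivalently, this is the content of \cite[Lemma 2.3]{BogWilEdjAsph}, which also shows that $L$ is a $K(\hat{G},1)$.

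For (ii), we use the fact that asphericity makes $\hat{G}$ the fundamental group of a graph-of-spaces-like structure relative to $G$. I would invoke \cite[Theorem 2.4]{BogWilEdjAsph} directly: for an aspherical relative presentation with $H\neq 1$, every finite subgroup of the resulting group is conjugate into $H$. The underlying argument runs through cohomology. The pair $(L,K)$ has relative cells only in dimensions $1$ and $2$, so $H^j(\hat{G},G;M)=0$ for $j\ge 3$. For a finite subgroup $F\le \hat{G}$ one considers the action of $F$ on the universal cover $\tilde{L}$, whose preimages of $K$ are copies of $\tilde K$ permuted with stabilizers the conjugates of $G$. Applying a Smith-theory or fixed-point argument to the contractible 2-complex obtained by coning off these copies produces a fixed vertex, and that vertex corresponds to a conjugate of $\epsilon(G)$ containing $F$. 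This step is the main obstacle if one proves it from scratch, so I would rely on the cited theorem rather than reprove it. The hypothesis $G\neq 1$ is exactly the one required in that theorem, since when $G=1$ the pair carries no information.

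For (iii), suppose $\hat{G}$ were finite. Then $\hat{G}$ itself is a finite subgroup, so by (ii) it is conjugate to a subgroup of $\epsilon(G)$, and hence $\hat{G}=\epsilon(G)$, because a conjugate of $\hat{G}$ is $\hat{G}$. It follows that the normal closure $\langle\langle \epsilon(G)\rangle\rangle$ is all of $\hat{G}$. The exact sequence of Theorem \ref{Theorem: intro short exact} then forces $H=\hat{G}/\langle\langle\epsilon(G)\rangle\rangle=1$, contradicting the hypothesis that $H$ is nontrivial. Note that (ii) requires $G\neq 1$, which is assumed in (iii) as well, so $\hat{G}$ must be infinite.
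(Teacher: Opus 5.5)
Your proposal is correct and follows the paper's proof step for step. Part (i) comes from exactness of $\pi_2(L,K)\rightarrow\pi_1(K)\rightarrow\pi_1(L)$, part (ii) from citing \cite[Theorem 2.4]{BogWilEdjAsph}, and part (iii) from combining (ii) with the exact sequence of Theorem \ref{Theorem: intro short exact}. Your fixed-point sketch for (ii) is only heuristic: you would need to collapse rather than cone the copies of $\tilde K$ to obtain a $2$-complex, and Smith theory by itself handles only $p$-groups. You rightly do not rely on it.

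The only difference is in (iii). You apply (ii) to $\hat{G}$ itself, assuming it is finite. The paper instead applies (ii) to $\langle g\rangle$ for some $g$ with $\pi(g)\neq 1$. That gives the slightly stronger fact that every element outside $\langle\langle\epsilon(G)\rangle\rangle$ has infinite order.
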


\begin{proof} Relative asphericity implies $G$ injects into $\hat{G}$. This follows from $\pi_2(L,K)$ being trivial in the long exact homotopy sequence in the proof of the previous theorem. Part [ii] follows from \cite[Theorem 2.4 (c)]{BogWilEdjAsph} which uses a group cohomology theorem due to Serre \cite[p. 139]{SerreThm}. For part [iii], suppose $G$ and $H$ are nontrivial. Theorem \ref{Theorem: intro short exact} provides a quotient map $\pi:\hat{G} \twoheadrightarrow H$ with kernel $ \langle \langle \epsilon(G) \rangle \rangle_{\hat{G}}$, so there exists a nontrivial element $g\in \hat{G}$ such that $q(g)$ is nontrivial in $H$. If $g$ has finite order, then the subgroup generated by $g$ is conjugate to a subgroup of $\epsilon(G)$ by part [ii]. But this implies $g \in  \langle \langle \epsilon_u(G) \rangle \rangle_{G_n(v \circ u)}$, in which case $q(g)$ is trivial. Thus $g$ must have infinite order, proving $\hat{G}$ is infinite. \qed
\end{proof}

\subsection{Definitions and results for cyclically presented groups} \label{Subsection: cyc pres}
A group $G$ is called cyclically presented if there exists $n>0$ and $w \in F(\mathbf{x})$ such that $G \cong G_n(w)$ is given by the cyclic presentation $\mathcal{G}_n(w) = \langle \mathbf{x}: \theta^i_\mathbf{x}(w), \ 0 \leq i<n \rangle$, where the shift automorphism $\theta_\mathbf{x} \in \mathrm{Aut}(F(\mathbf{x}))$ (respectively $\theta_\mathbf{u}\in \mathrm{Aut}(F(\mathbf{u}))$) is defined by $\theta_\mathbf{x} (x_i)=x_{i+1}$ for $0\leq i<n$, with subscripts taken modulo $n$. A word $u\in F(x_0, \dots, x_{n-1})$ determines the substitution homomorphism
$$\epsilon_{u}:F(\mathbf{u})\rightarrow F(\mathbf{x})$$
defined by $\epsilon_u(u_{i}) =\theta^{i}_{\mathbf{x}}(u)$ for $0 \leq i< n$, and in this situation we will write $v \circ u = \epsilon_u(v)$. 

The shift automorphism induces an automorphism of a cyclically presented group and to begin we have the following.

\begin{Lemma} \label{Lemma: shift sub commute} Let $n>0, u \in F(\mathbf{x})$, and $v\in F(\mathbf{u})$. Then the substitution homomorphism $\epsilon_u : F(\mathbf{u}) \rightarrow F(\mathbf{x})$ defined by $\epsilon_u(u_{i}) =\theta^{i}_{\mathbf{x}}(u)$ for $0 \leq i< n$ induces a homomorphism 
$$\epsilon_u:G_n(v) \rightarrow G_n(v \circ u)$$
that is shift equivariant, i.e. $\epsilon_u \circ \theta_{\mathbf{u}}=\theta_{\mathbf{x}}\circ \epsilon_u$. 
\end{Lemma}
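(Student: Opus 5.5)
The plan is to first prove the identity $\epsilon_u \circ \theta_{\mathbf{u}} = \theta_{\mathbf{x}} \circ \epsilon_u$ at the level of free groups, and then deduce both the induced homomorphism and its equivariance from it. Since both sides are homomorphisms $F(\mathbf{u}) \rightarrow F(\mathbf{x})$, it suffices to compare them on the generators $u_i$, $0 \leq i < n$. For $0 \leq i < n-1$ we have
\begin{align*}
\epsilon_u(\theta_{\mathbf{u}}(u_i)) = \epsilon_u(u_{i+1}) = \theta_{\mathbf{x}}^{i+1}(u) = \theta_{\mathbf{x}}(\theta_{\mathbf{x}}^{i}(u)) = \theta_{\mathbf{x}}(\epsilon_u(u_i)).
\end{align*}
The only case needing comment is $i = n-1$. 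There $\theta_{\mathbf{u}}(u_{n-1}) = u_0$, so the left side is $\epsilon_u(u_0) = u$, while the right side is $\theta_{\mathbf{x}}^{n}(u)$. These agree because $\theta_{\mathbf{x}}^n$ is the identity on $F(\mathbf{x})$, as subscripts are taken modulo $n$. This wrap-around is the one point to check carefully; it is what makes the definition of $\epsilon_u$ with indices modulo $n$ consistent.

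Next I would show that $\epsilon_u$ descends to a homomorphism $G_n(v) \rightarrow G_n(v \circ u)$. By the free-level identity, iterated $i$ times,
\begin{align*}
\epsilon_u(\theta_{\mathbf{u}}^i(v)) = \theta_{\mathbf{x}}^i(\epsilon_u(v)) = \theta_{\mathbf{x}}^i(v \circ u),
\end{align*}
which is exactly a defining relator of $\mathcal{G}_n(v \circ u)$. Hence $\epsilon_u$ maps each relator of $\mathcal{G}_n(v)$ to a relator of $\mathcal{G}_n(v\circ u)$. It therefore maps the normal closure of the relators of $\mathcal{G}_n(v)$ into the normal closure of the relators of $\mathcal{G}_n(v\circ u)$, and so induces a homomorphism of the quotients. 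Equivalently, this is the map $\varepsilon: G \rightarrow \hat{G}$ of Theorem \ref{Theorem: intro short exact} for the composite presentation $\langle \mathbf{x}: \epsilon_u(\mathbf{v}) \rangle$ with $\mathbf{v} = \{\theta_{\mathbf{u}}^i(v)\}$.

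Finally, the shift automorphisms descend to $G_n(v)$ and $G_n(v\circ u)$, since each of them permutes the corresponding set of relators. Equivariance on the quotients then follows immediately from equivariance on the free groups, because the quotient maps intertwine all four homomorphisms.
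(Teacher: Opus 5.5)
Your proposal is correct and follows essentially the same route as the paper: you verify $\epsilon_u\circ\theta_{\mathbf{u}}=\theta_{\mathbf{x}}\circ\epsilon_u$ on the generators $u_i$, then use it to show that each relator $\theta_{\mathbf{u}}^i(v)$ maps to the defining relator $\theta_{\mathbf{x}}^i(v\circ u)$. Two points the paper leaves implicit, you make explicit: the wrap-around case $i=n-1$, handled via $\theta_{\mathbf{x}}^n=\mathrm{id}$, and the fact that the shifts descend to the quotient groups.
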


\begin{proof} For $0\leq i <n$, the substitution homomorphism is shift equivariant since each $u_i \in \mathbf{u}$ satisfies 
$$\theta_{\mathbf{x}}(\epsilon_{u}(u_{i}))=\theta_{\mathbf{x}}(\theta^{i}_{\mathbf{x}}(u))=\theta^{i+1}_{\mathbf{x}}(u)=\epsilon_{u}(u_{i+1})=\epsilon_{u}(\theta_{\mathbf{u}}(u_{i})),$$
where the induced map between groups is well defined since each relator $\theta^i_{\mathbf{u}}(v)$ of $\mathcal{G}_n(v)$ maps to $(\epsilon_u \circ \theta^{i}_{\mathbf{u}})(v) = (\theta^{i}_{\mathbf{x}} \circ \epsilon_{u})(v) = \theta^{i}_{\mathbf{x}}(v \circ u)$, a defining relator of $\mathcal{G}_n(v \circ u)$. \qed
\end{proof}

Given a cyclically presented group $G_n(w)$ with $w \in F(\mathbf{x})$, its shift extension $E_n(w)= G_n(w)\rtimes_{\theta_{\mathbf{x}}} \mathbb{Z}_n$ admits the presentation 
$$\mathcal{E}_n(w)= \langle a, x: a^n, W \rangle ,$$ 
where $W$ is obtained from $w$ by the substitution $x_i=a^ixa^{-i}$ for $0 \leq i < n$. The natural inclusion $i_{a,x}:G_n(w) \rightarrow E_n(w)$ is defined by $i_{a,x}(x_i)=a^ixa^{-i}$ for $0 \leq i < n$ so that $i_{a,x}(w)=W$. When $w= v \circ u$ is composite, the shift extension is  a partially composite presentation as in Example \ref{Example: partial comp}, and so we may reduce the relative presentation $\hat{\mathcal{G}}$ from Lemma \ref{Lemma: relative hat G} as follows. 

\begin{Lemma} \label{Lemma: E relative} Let $n>0$, $u\in F(\mathbf{x})$ and $v \in F(\mathbf{u})$. The substitution homomorphism $\epsilon_u:G_n(v) \rightarrow G_n(v \circ u)$ extends to a homomorphism 
$$\epsilon_u: E_n(v) \rightarrow E_n(v \circ u)$$ 
defined by $\epsilon_u(a)=a$ and $\epsilon_u(U)=i_{a,x}(u)$, and the shift extension $E_n(v \circ u)=G_n(v \circ u) \rtimes_{\theta_{\mathbf{x}}} \mathbb{Z}_n$ admits a relative presentation of the form
\begin{align} \label{Presentation: E relative}
\hat{\mathcal{E}}_n(v \circ u) = \langle E_n(v), x : i_{a,x}(u)U^{-1} \rangle  ,
\end{align}
where $E_n(v)\cong \langle a, U: a^n, i_{a,U}(v) \rangle $.
\end{Lemma}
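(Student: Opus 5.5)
The proof has two parts: first check that the proposed map $\epsilon_u$ on shift extensions is well defined, then obtain the relative presentation by the same Tietze argument as in Lemma \ref{Lemma: relative hat G}, applied to the partially composite presentation $\mathcal{E}_n(v\circ u)$ as in Example \ref{Example: partial comp}.

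For well-definedness, define $\epsilon_u$ on $F(a,U)$ by $a\mapsto a$ and $U\mapsto i_{a,x}(u)$. The relator $a^n$ maps to $a^n$. For the relator $i_{a,U}(v)$, first compute the image of the conjugate $a^iUa^{-i}$ (the image of $u_i$):
$$\epsilon_u(a^iUa^{-i}) = a^i\, i_{a,x}(u)\, a^{-i} = i_{a,x}(\theta^i_{\mathbf{x}}(u)).$$
This uses the identity $a\, i_{a,x}(w)\, a^{-1} = i_{a,x}(\theta_{\mathbf{x}}(w))$ for every $w\in F(\mathbf{x})$. That identity is immediate on generators, since $a\cdot a^jxa^{-j}\cdot a^{-1}=a^{j+1}xa^{-j-1}$, and the index wraps around correctly modulo $n$ because $a^n=1$ in $E_n(v\circ u)$. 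Therefore
$$\epsilon_u\bigl(i_{a,U}(v)\bigr) = i_{a,x}\bigl(\epsilon_u(v)\bigr) = i_{a,x}(v\circ u),$$
which is precisely the relator $W$ of $\mathcal{E}_n(v\circ u)$. So the map descends to a homomorphism $E_n(v)\to E_n(v\circ u)$. Restricted to $G_n(v)$, embedded via $u_i\mapsto a^iUa^{-i}$, it agrees with the map of Lemma \ref{Lemma: shift sub commute}, so it is an extension of that map.

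For the relative presentation, start from $\mathcal{E}_n(v\circ u)=\langle a,x: a^n, i_{a,x}(v\circ u)\rangle$ and adjoin a generator $U$ together with the relation $U=i_{a,x}(u)$. The relator $i_{a,x}(v\circ u)$ is the word $i_{a,U}(v)$ with each $a^iUa^{-i}$ replaced by $a^i\,i_{a,x}(u)\,a^{-i}$. Using the relation $U=i_{a,x}(u)$, we can replace each such subword by $a^iUa^{-i}$, and this rewrites the relator as $i_{a,U}(v)$. The result is the presentation
$$\langle a,x,U: a^n, i_{a,U}(v), i_{a,x}(u)U^{-1}\rangle.$$
The subpresentation on the generators $a,U$ is exactly $\langle a,U:a^n,i_{a,U}(v)\rangle\cong E_n(v)$. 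Viewing it as the base group gives the relative presentation $\langle E_n(v),x: i_{a,x}(u)U^{-1}\rangle$, and the natural map from $E_n(v)$ is $\epsilon_u$. Here the generator $a$ plays the role of the deleted pair ($x_j$ together with the relation $aA^{-1}$) in Example \ref{Example: partial comp}.

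The main obstacle is the bookkeeping in the first step: confirming that $i_{a,x}$ intertwines conjugation by $a$ with $\theta_{\mathbf{x}}$, so that $\epsilon_u\circ i_{a,U}$ and $i_{a,x}\circ\epsilon_u$ agree. This equality holds modulo $a^n$ rather than in the free group, because of the wrap-around of indices. The Tietze step itself is routine.
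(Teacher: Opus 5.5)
Your proposal is correct and follows essentially the same route as the paper. You extend $\epsilon_u$ by $a\mapsto a$, $U\mapsto i_{a,x}(u)$ and verify $\epsilon_u\circ i_{a,U}=i_{a,x}\circ\epsilon_u$ modulo $a^n$; the paper does the same by working in $\mathbb{Z}_n\ast\langle x\rangle$, for exactly the wrap-around reason you identify. You then adjoin $U$ via $U=i_{a,x}(u)$ and rewrite $i_{a,x}(v\circ u)$ as $i_{a,U}(v)$, which yields the relative presentation.
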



\begin{proof} The group $E_n(v \circ u)$ is presented by $\mathcal{E}_n(v \circ u) = \langle a, x: a^n, i_{a,x}(v \circ u)  \rangle$ and the inclusion $i_{a, U}:F(\mathbf{u}) \rightarrow F(a, U)$ defined by $i_{a, U}(u_i)=a^iUa^{-i}$ extends $\epsilon_u:F( \mathbf{u}) \rightarrow F(\mathbf{x})$ to 
$$\epsilon_u:\mathbb{Z}_n \ast \langle u \rangle \rightarrow \mathbb{Z}_n \ast \langle x \rangle$$ 
by $\epsilon_u(a)=a$ and $\epsilon_u(U)= i_{a,x}(u)$ for $\mathbb{Z}_n \cong \langle a: a^n \rangle$. One checks $\epsilon_u \circ i_{a, U} = i_{a, x} \circ \epsilon_u$ from which it follows that $i_{a,x}(v \circ u) = \epsilon_u(i_{a,U}(v))$.

%
%

Next adjoin the generator $U$ to $\mathcal{E}_n(v \circ u)$ by the relation $U=i_{a,x}(u)$. Working modulo the relation $i_{a, x}(u)U^{-1}=1$, we have $\epsilon_u(U)=i_{a,x}(u) \equiv U$, and since $\epsilon_u(a)=a$,  this implies $\epsilon_u(i_{a, U}(v)) \equiv i_{a, U}(v)$. It now follows that $E_n(v \circ u)$ admits the relative presentation (\ref{Presentation: E relative}). \qed
\end{proof}

For $\mathbb{Z}_n \cong \langle a: a^n \rangle$ and $w\in F(\mathbf{x})$, the canonical retraction
$$\nu=\nu_w: E_n(w) \rightarrow \mathbb{Z}_n$$
is defined by $\nu(a)=a$ and $\nu(x)=1$ and it has the property that $\mathrm{ker}(\nu)\cong G_n(w)$.

\begin{Lemma} \label{Lemma: epsilon extension commutes} Let $n>0$. For $u\in F(\mathbf{x})$ and $v \in F(\mathbf{u})$, let $\nu_{v}$ and $\nu_{v \circ u}$ be the canonical retractions from $E_n(v)$ and $E_n(v \circ u)$ onto $\mathbb{Z}_n \cong \langle a: a^n \rangle$ with kernels $G_n(v)$  and  $G_n(v \circ u)$, respectively. Then $\nu_{v \circ u}\circ \epsilon_{u}=\nu_{v}$, from which it follows that the diagram commutes.
$$
\xymatrix{G_{n}(v)\ar@{^{(}->}[r]^{i_{a,U}}\ar[d]^{\epsilon_u} & E_{n}(v)\ar[d]^{\epsilon_u}\ar[r]^{\nu_{v}} & \mathbb{Z}_{n}\ar@{=}[d]\\
G_{n}(v \circ u)\ar@{^{(}->}[r]^{i_{a,x}} & E_{n}(v \circ u)\ar[r]^{\nu_{v \circ u}} & \mathbb{Z}_{n}}
$$
\end{Lemma}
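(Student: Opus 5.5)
The plan is to check the identity $\nu_{v \circ u}\circ \epsilon_u=\nu_v$ on generators of $E_n(v)$, and then read off commutativity of the diagram from this identity together with the compatibility of $\epsilon_u$ with the inclusions recorded in the proof of Lemma \ref{Lemma: E relative}.

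First I would fix the presentations $E_n(v)\cong\langle a, U: a^n, i_{a,U}(v)\rangle$ and $E_n(v\circ u)\cong \langle a,x: a^n, i_{a,x}(v\circ u)\rangle$. By Lemma \ref{Lemma: E relative}, $\epsilon_u$ is a well defined homomorphism with $\epsilon_u(a)=a$ and $\epsilon_u(U)=i_{a,x}(u)$. The retractions are determined by $\nu_v(a)=a$, $\nu_v(U)=1$ and $\nu_{v\circ u}(a)=a$, $\nu_{v\circ u}(x)=1$; their well-definedness is part of the definition of the canonical retraction.

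Since both $\nu_{v\circ u}\circ\epsilon_u$ and $\nu_v$ are homomorphisms out of $E_n(v)$, it suffices to compare them on $a$ and $U$. On $a$ both give $a$. On $U$ we have $\nu_{v\circ u}(\epsilon_u(U))=\nu_{v\circ u}(i_{a,x}(u))$. Now $i_{a,x}(u)$ is a product of the conjugates $a^ix^{\pm1}a^{-i}$, each of which $\nu_{v\circ u}$ sends to $a^i\cdot 1\cdot a^{-i}=1$. So the image is $1=\nu_v(U)$. This proves the right square commutes.

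For the left square, I would use the identity $\epsilon_u\circ i_{a,U}=i_{a,x}\circ\epsilon_u$ on $F(\mathbf{u})$ established in the proof of Lemma \ref{Lemma: E relative}. Passing to quotients, it holds on $G_n(v)$, where the lower $\epsilon_u$ is the map of Lemma \ref{Lemma: shift sub commute}. The identity $\nu_{v\circ u}\circ\epsilon_u=\nu_v$ then gives more: $\epsilon_u$ maps $\ker\nu_v=i_{a,U}(G_n(v))$ into $\ker\nu_{v\circ u}=i_{a,x}(G_n(v\circ u))$, and its restriction agrees with the substitution map on $G_n(v)$.

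The only point needing care is the well-definedness of the maps on the quotient groups, rather than just on free products. This is already supplied by Lemmas \ref{Lemma: shift sub commute} and \ref{Lemma: E relative}, so I expect no real obstacle.
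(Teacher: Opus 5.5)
Your proposal is correct and follows the paper's proof: both verify $\nu_{v\circ u}\circ\epsilon_u=\nu_v$ on the generators $a$ and $U$, using that $i_{a,x}(u)$ lies in $\ker\nu_{v\circ u}$. The one addition is that you spell out the left square via the identity $\epsilon_u\circ i_{a,U}=i_{a,x}\circ\epsilon_u$ from the proof of Lemma~\ref{Lemma: E relative}, which the paper leaves implicit in ``from which it follows that the diagram commutes.''
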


\begin{proof}  Recall the canonical retractions are defined by $\nu_{v \circ u}(x)=\nu_v(U)=1$ and $\nu_{v \circ u}(a)=\nu_v(a)=a$, where $u \in G_{n}(v \circ u) \cong \mathrm{Ker}(\nu_{v \circ u})$ corresponds to $i_{a,x}(u)\in \mathrm{Ker}(\nu_{v \circ u})$. Then $\nu_{v \circ u}\circ \epsilon_{u}=\nu_{v}$ since $\nu_{v}(U)=1=\nu_{v \circ u}(i_{a,x}(u))=\nu_{v \circ u}(\epsilon_u(U))$ and $\nu_{v}(a)=a=\nu_{v \circ u}(a)=\nu_{v \circ u}(\epsilon_{u}(a))$. \qed
\end{proof}

A simple diagram chase using Lemma \ref{Lemma: epsilon extension commutes} proves the following.

\begin{Theorem} \label{Theorem: injective} Let $n>0$, $u\in F(\mathbf{x})$, and $v \in F(\mathbf{u})$. The substitution homomorphism $\epsilon_u:G_{n}(v) \rightarrow G_n(v \circ u)$ is injective if and only if the induced homomorphism $\epsilon_u: E_n(v) \rightarrow E_n(v \circ u)$ is injective.
\end{Theorem}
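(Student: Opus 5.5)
We prove the two implications separately, working with the commutative diagram of Lemma \ref{Lemma: epsilon extension commutes}. Both rows of that diagram are split short exact sequences: $G_n(v) \to E_n(v) \to \mathbb{Z}_n$ and $G_n(v \circ u) \to E_n(v \circ u) \to \mathbb{Z}_n$. Here the inclusions $i_{a,U}$ and $i_{a,x}$ identify the cyclically presented groups with the kernels of the canonical retractions $\nu_v$ and $\nu_{v\circ u}$. The left square commutes because $\epsilon_u \circ i_{a,U} = i_{a,x}\circ \epsilon_u$, which is the identity checked in the proof of Lemma \ref{Lemma: E relative}. The right square commutes because $\nu_{v \circ u}\circ \epsilon_u = \nu_v$.

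\emph{Injectivity on $E_n$ implies injectivity on $G_n$.} Suppose $\epsilon_u: E_n(v) \rightarrow E_n(v\circ u)$ is injective and let $g \in G_n(v)$ with $\epsilon_u(g)=1$ in $G_n(v \circ u)$. Then
$$\epsilon_u(i_{a,U}(g)) = i_{a,x}(\epsilon_u(g)) = 1.$$
Injectivity on $E_n(v)$ gives $i_{a,U}(g)=1$. Since $i_{a,U}$ is injective, as it identifies $G_n(v)$ with $\ker \nu_v$, we get $g=1$.

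\emph{Injectivity on $G_n$ implies injectivity on $E_n$.} Suppose $\epsilon_u: G_n(v) \rightarrow G_n(v\circ u)$ is injective and let $e\in E_n(v)$ with $\epsilon_u(e)=1$. Applying the right square gives
$$\nu_v(e)=\nu_{v\circ u}(\epsilon_u(e))=1,$$
so $e \in \ker \nu_v$ and $e = i_{a,U}(g)$ for some $g \in G_n(v)$. Then
$$i_{a,x}(\epsilon_u(g))=\epsilon_u(i_{a,U}(g))=\epsilon_u(e)=1.$$
Injectivity of $i_{a,x}$ gives $\epsilon_u(g)=1$, hence $g=1$ by hypothesis, and so $e=1$. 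This is the restriction of the five-lemma style chase to kernels; the right vertical map is the identity on $\mathbb{Z}_n$, which is what makes it work.

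\emph{Main point to verify.} The argument depends on the inclusions $i_{a,U}$ and $i_{a,x}$ being injective with images exactly $\ker\nu_v$ and $\ker\nu_{v\circ u}$. This is the standard identification $E_n(w)\cong G_n(w)\rtimes_{\theta_{\mathbf{x}}}\mathbb{Z}_n$ recalled before Lemma \ref{Lemma: E relative}, where the shift acts by automorphisms. I would cite that semidirect product structure rather than reprove it, checking only that the presentation $\langle a,U: a^n, i_{a,U}(v)\rangle$ really is the semidirect product $G_n(v)\rtimes\mathbb{Z}_n$ (Reidemeister--Schreier over $\langle a\rangle$ recovers the cyclic presentation). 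Everything else is formal.
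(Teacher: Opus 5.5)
Your proposal is correct and follows the paper's approach: the paper proves this theorem by a ``simple diagram chase'' in the commutative diagram of Lemma \ref{Lemma: epsilon extension commutes}, and you write out both directions of that chase. The ingredients you use are the ones the paper relies on: the identity $\epsilon_u\circ i_{a,U}=i_{a,x}\circ\epsilon_u$ from the proof of Lemma \ref{Lemma: E relative}, the relation $\nu_{v\circ u}\circ\epsilon_u=\nu_v$, and the identification of $G_n(w)$ with $\ker\nu_w$.
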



\section{The normal closure of $\epsilon(G)$ in $\hat{G}$} \label{Section: The normal closure G in hat G}

We now return to the general situation where we are given a group presentation $\mathcal{G}=\langle \mathbf{u}: \mathbf{v} \rangle$ and substitution homomorphism $\epsilon: F(\mathbf{u}) \rightarrow F(\mathbf{x})$ and investigate the group $\hat{G}$ presented by $\mathcal{P}= \langle \mathbf{x}: \epsilon(\mathbf{v}) \rangle$. Our starting point is Theorem \ref{Theorem: intro short exact} which states that
$$1 \rightarrow \langle \langle \epsilon(G) \rangle \rangle_{\hat{G}} \xrightarrow{i} \hat{G} \xrightarrow{\pi} H \rightarrow 1$$
is an exact sequence of homomorphisms where $H \cong \langle \mathbf{x}: \epsilon(\mathbf{u}) \rangle$.

\subsection{Extensions and subgroups of $F(\textbf{x})$}

Consider now the image of $G$ by the substitution homomorphism $\epsilon$. We are interested in whether $\epsilon(G)$ is a normal subgroup of $\hat{G}$, and, if not, what can be said about its normal closure $N= \langle \langle \epsilon(G) \rangle \rangle_{\hat{G}}$ in $\hat{G}$. To begin, observe that if $\epsilon(G)$ is normal in $\hat{G}$ and additionally if $\epsilon$ is injective, then Theorem \ref{Theorem: intro short exact} implies the group $\hat{G}$ solves the extension problem for $H$ by $G$. We introduce two working examples of finite groups as follows.

\begin{Example} \label{Example: image normal} Let $G= G_2(u_0^2u_1^{-1}) \cong \mathbb{Z}_3$ and consider the substitution $u=x_0^2x_1^{-1}$ where also $H=G_2(u) \cong \mathbb{Z}_3$. We show $\hat{G}=G_2(u^2\theta(u)^{-1})= G_2((x_0^2x_1^{-1})^2x_0x_1^{-2})$ is an extension of $H$ by $G$. The element $u^3=\epsilon_u(u_0^3)=1\in \hat{G}$ so $u^2=u^{-1}$ and we may rewrite the relation $1 = u^2(\theta(u))^{-1}$ as $1=(x_0^2x_1^{-1})^{-1}x_0x_1^{-2}$. This implies $x_0=x_1^{-1} \in \hat{G}$. The subgroup $\epsilon_u(G)\cong \langle u \rangle \cong \langle x_0^3 \rangle$ is nontrivial in $\hat{G}$ since it has nontrivial image under abelianization $\hat{G} \overset{ab}{\rightarrow} \langle x_0 \rangle \cong \mathbb{Z}_9$. Therefore $\epsilon(G) \cong G \cong \mathbb{Z}_3$ and it is normal in $\hat{G}$ since $x_iux_i^{-1}= x_0^{3}=u$ for $i=0,1$. Thus $\hat{G}$ is an extension of $H$ by $G$. In fact, the identity $x_0=x_1^{-1}$ implies $\hat{G}$ is abelian, proving $\hat{G} \cong \mathbb{Z}_9$.
\end{Example}

If $\epsilon(G)$ is not normal in $\hat{G}$, then Theorem \ref{Theorem: intro short exact} provides that $\hat{G}$ modulo the normal closure of $\epsilon(G)$ is isomorphic to $H \cong \langle \mathbf{x}: \epsilon(\mathbf{u}) \rangle$. Consequently, $\epsilon(\mathbf{u})$ normally generates $\hat{G}$ if and only if the substitution homomorphism determines the trivial group $H=1$. Let us consider a finite case when $\hat{G}$ is normally generated by one element. A family of groups $Q(k,l)$ which are normally generated by one element and contains infinite groups will be explored in Section \ref{Section: QKL}.

\begin{Example} \label{Example: Dihedral} Let $n>0$ be odd, $v=u_0u_1$, $u=x_0x_1x_0^{-1}$, and consider the group given by
$$\hat{G}=G_n(v \circ u)=G_n(x_0x_1x_0^{-1}x_1x_2x_1^{-1})$$
where $G= G_n(v) \cong \mathbb{Z}_2$ is generated by $u_0$. The substitution group $H=G_n(u)=1$ so Theorem \ref{Theorem: intro short exact} implies $\hat{G}$ is normally generated by $\epsilon(G)$. Therefore the involution $u = \epsilon(u_0)$ normally generates $\hat{G}$. We have $\hat{G}^{ab} \cong G_n(u_1u_2) \cong \mathbb{Z}_2$ so $\hat{G}$ is nontrivial with $u \neq 1$.

The powers of the shift automorphism of $\hat{G}$ have fixed point $\theta^i(u)=u$ since $u_i=u_0 \in G$, from which it follows that $|u_i|=|x_i|=2$ for $0 \leq i <n$. Therefore the relation $x_ix_{i+1}x_i^{-1}x_{i+1}x_{i+2}x_{i+1}^{-1}=1$ implies $(x_ix_{i+1})^2=x_{i+1}x_{i+2}$, and in turn
$$x_0x_1=(x_{n-1}x_{0})^2=\dots=(x_0x_1)^{2^n}$$
so that $(x_0x_1)^{m}=1$ for $m=2^n-1$. Now the relation $(x_ix_{i+1})^2=x_{i+1}x_{i+2}$ also provides that $x_{i+2}=x_{i+1}^{-1}(x_ix_{i+1})^2$, and solving the recurrence relation we have $x_i= x_0(x_0x_1)^{2^i-1}$ for $0<i<n$. Hence $\hat{G}$ is generated by $x_0$ and $x_0x_1$. These generators satisfy $x_0(x_0x_1)x_0(x_0x_1)= 1$, and so it follows that $\hat{G}$ is isomorphic to the dihedral group $D_m$ of order $2m$. Indeed, for $r,s$ the generators of $D_m$ such that $s^2=r^m=srsr=1$, mappings are given by $s \leftrightarrow x_0$, $r \leftrightarrow x_0x_1$, and $x_i=x_0(x_0x_1)^{2^i-1} \leftrightarrow sr^{2^i-1}$ for $0<i<n$.
\end{Example}

The difference between $\epsilon(G)$ and its normal closure in $\hat{G}$ can be stated in terms of subgroups of $F(\mathbf{x})$. We may then leverage the fact that the subgroup of $F(\mathbf{x})$ generated by $\epsilon(\mathbf{u})$ contains $\epsilon(\mathbf{v})$. This gives access to index arguments that may be useful in determining $\hat{G}$, and which may also be implemented computationally using GAP. For brevity, we will write $\epsilon(F(\mathbf{u}))$ for the subgroup generated by $\epsilon(\mathbf{u})$ in $F(\mathbf{x})$.

\begin{Proposition} \label{Prop: free subgroups}  Let $\epsilon: F(\mathbf{u}) \rightarrow F(\mathbf{x})$ be a homomorphism and $G \cong \langle \mathbf{u}: \mathbf{v} \rangle$ with $\mathbf{v} \subseteq F(\mathbf{u})$. Then for $R_{\mathbf{v}}= \langle \langle \mathbf{v} \rangle \rangle_{F(\mathbf{u})}$, there are isomorphisms $\epsilon(G) \cong \epsilon(F(\mathbf{u})) \big/ \epsilon(R_{\mathbf{v}})$ and $\langle \langle \epsilon(G) \rangle \rangle_{\hat{G}} \cong \langle \langle \epsilon(\mathbf{u}) \rangle \rangle_{F(\mathbf{x})} \big/ \langle \langle \epsilon(\mathbf{v}) \rangle \rangle_{F(\mathbf{x})}$, and the index of $\epsilon(G)$ in its normal closure $\langle \langle \epsilon(G) \rangle \rangle_{\hat{G}}$ equals the index of $\epsilon(F(\mathbf{u}))\langle \langle \epsilon(\mathbf{v}) \rangle \rangle_F$ in $\langle \langle \epsilon(\mathbf{u}) \rangle \rangle_{F(\mathbf{x})}$.
\end{Proposition}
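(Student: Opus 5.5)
The approach is to work throughout inside $F=F(\mathbf{x})$ with $N=\langle\langle \epsilon(\mathbf{v})\rangle\rangle_{F}$, so that $\hat{G}=F/N$ with quotient map $q:F\to\hat{G}$, and to translate each of the three claims into a statement about subgroups of $F$ containing $N$. Then the correspondence and isomorphism theorems do the work. The starting point is the relative presentation of Lemma \ref{Lemma: relative hat G}: the induced map $\epsilon:G\to\hat{G}$ is $u_i R_{\mathbf{v}}\mapsto q(\epsilon(u_i))$. Hence $\epsilon(G)=q(\epsilon(F(\mathbf{u})))=\epsilon(F(\mathbf{u}))N/N$.

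For the first isomorphism, the plan is as follows. By the second isomorphism theorem, $\epsilon(F(\mathbf{u}))N/N\cong \epsilon(F(\mathbf{u}))/(\epsilon(F(\mathbf{u}))\cap N)$. I would then compare $\epsilon(F(\mathbf{u}))\cap N$ with $\epsilon(R_{\mathbf{v}})$. The inclusion $\epsilon(R_{\mathbf{v}})\subseteq \epsilon(F(\mathbf{u}))\cap N$ is immediate, since $\epsilon(R_{\mathbf{v}})$ is the normal closure of $\epsilon(\mathbf{v})$ inside the subgroup $\epsilon(F(\mathbf{u}))$. This step is where I expect the main obstacle. The reverse inclusion is not formal: it is essentially the injectivity problem for $\epsilon:G\to\hat{G}$, as in Theorem \ref{Theorem: relative aspherical}(i). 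Accordingly, I would read the first isomorphism in the sense that $\epsilon(F(\mathbf{u}))/\epsilon(R_{\mathbf{v}})$ is the quotient of $G$ through which $\epsilon$ factors, and that it maps onto $\epsilon(G)$. It is an isomorphism exactly when $\epsilon(F(\mathbf{u}))\cap N=\epsilon(R_{\mathbf{v}})$, for instance when the relative presentation is aspherical and $\epsilon$ is injective on $F(\mathbf{u})$. For the index statement I would use only the unconditional description $\epsilon(G)=\epsilon(F(\mathbf{u}))N/N$.

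The second isomorphism should be routine. The normal closure of $q(\epsilon(\mathbf{u}))$ in $F/N$ is $q(\langle\langle\epsilon(\mathbf{u})\rangle\rangle_F)$. Since $\epsilon(\mathbf{v})\subseteq\epsilon(F(\mathbf{u}))\subseteq\langle\langle\epsilon(\mathbf{u})\rangle\rangle_F$, we have $N\subseteq \langle\langle\epsilon(\mathbf{u})\rangle\rangle_F$. It follows that $\langle\langle\epsilon(G)\rangle\rangle_{\hat{G}}=\langle\langle\epsilon(\mathbf{u})\rangle\rangle_F/N$. This also recovers Theorem \ref{Theorem: intro short exact} via the third isomorphism theorem.

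Finally, for the index statement, both $\epsilon(F(\mathbf{u}))N$ and $\langle\langle\epsilon(\mathbf{u})\rangle\rangle_F$ contain $N$. Under the correspondence theorem for $q$, they correspond to $\epsilon(G)$ and $\langle\langle\epsilon(G)\rangle\rangle_{\hat{G}}$ respectively. The correspondence preserves indices, so
$[\langle\langle\epsilon(G)\rangle\rangle_{\hat{G}}:\epsilon(G)]=[\langle\langle\epsilon(\mathbf{u})\rangle\rangle_F:\epsilon(F(\mathbf{u}))\langle\langle\epsilon(\mathbf{v})\rangle\rangle_F]$.
This holds whether or not the first isomorphism is sharp, so the index claim is the robust part of the proposition.
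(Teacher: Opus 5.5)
Your arguments for the second isomorphism and for the index are correct and match the paper's. Both identify $\epsilon(G)$ with $\epsilon(F(\mathbf{u}))N/N$, note that $N=\langle\langle\epsilon(\mathbf{v})\rangle\rangle_{F(\mathbf{x})}\le\langle\langle\epsilon(\mathbf{u})\rangle\rangle_{F(\mathbf{x})}$, and read off the index through the correspondence theorem.

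Your hesitation over the first isomorphism is well founded. The paper obtains it by asserting $\epsilon(F(\mathbf{u}))\cap\langle\langle\epsilon(\mathbf{v})\rangle\rangle_{F(\mathbf{x})}=\langle\langle\epsilon(\mathbf{v})\rangle\rangle_{\epsilon(F(\mathbf{u}))}=\epsilon(R_{\mathbf{v}})$, misprinted there as $\epsilon(F(\mathbf{x}))\cap\cdots$. The second equality is fine, because $\epsilon$ maps $F(\mathbf{u})$ onto $\epsilon(F(\mathbf{u}))$. The first equality is exactly the reverse inclusion you flagged, and it fails in general. So the first isomorphism as stated is false, not merely unproved.

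\emph{Counterexample.} Take $\mathbf{v}=\{u_0^2,u_1^3\}$, so $G\cong\mathbb{Z}_2\ast\mathbb{Z}_3$, and set $\epsilon(u_0)=x_0$, $\epsilon(u_1)=x_1x_0x_1^{-1}$ in $F(x_0,x_1)$. These images are two members of the free basis $\{x_1^jx_0x_1^{-j}\}$ of $\langle\langle x_0\rangle\rangle_{F(\mathbf{x})}$. Hence $\epsilon$ is injective, and $\epsilon(F(\mathbf{u}))/\epsilon(R_{\mathbf{v}})\cong G\cong\mathbb{Z}_2\ast\mathbb{Z}_3$. On the other hand, $\hat{G}=\langle x_0,x_1: x_0^2, x_1x_0^3x_1^{-1}\rangle$ forces $x_0=1$. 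So $\hat{G}\cong\mathbb{Z}$ and $\epsilon(G)=1$, and here $\epsilon(F(\mathbf{u}))\cap N=\epsilon(F(\mathbf{u}))\neq\epsilon(R_{\mathbf{v}})$.

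\emph{Correct criterion.} In general $\epsilon(F(\mathbf{u}))\cap N=\epsilon(\epsilon^{-1}(N))$. So the natural surjection $\epsilon(F(\mathbf{u}))/\epsilon(R_{\mathbf{v}})\twoheadrightarrow\epsilon(G)$ you describe is an isomorphism if and only if $\epsilon^{-1}(N)=R_{\mathbf{v}}\ker\epsilon$. Equivalently, the kernel of $G\to\hat{G}$ must be the image of $\ker\epsilon$ in $G$. In particular, injectivity of $G\to\hat{G}$ already suffices (for example, asphericity of $\hat{\mathcal{G}}$ alone), since it forces $\ker\epsilon\subseteq\epsilon^{-1}(N)=R_{\mathbf{v}}$. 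Your extra hypothesis that $\epsilon$ be injective on $F(\mathbf{u})$ is therefore unnecessary.

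With that refinement, the statement you actually prove is the correct version of the proposition, and your proof of it is complete: the natural surjection onto $\epsilon(G)$ together with this criterion, the second isomorphism, and the unconditional index formula.
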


\begin{proof} The subgroup generated by $\epsilon(\mathbf{v})$ is contained in the subgroup generated by $\epsilon(\mathbf{u})$ in $F(\mathbf{x})$, where for $R_\mathbf{v} = \langle \langle \mathbf{v} \rangle \rangle_{F(\mathbf{u})}$ there is the subgroup lattice
\begin{align*}
\begin{xymatrix}{
&& \langle \langle \epsilon(\mathbf{u}) \rangle \rangle_{F(\mathbf{x})} \ar@{-}[d]\\
&& \epsilon(F(\mathbf{u}))\langle \langle \epsilon(\mathbf{v}) \rangle \rangle_{F(\mathbf{x})} \ar@{-}[dl]\ar@{-}[dr]\\
& \epsilon(F(\mathbf{u})) \ar@{-}[dr] & & \langle \langle \epsilon(\mathbf{v}) \rangle \rangle_{F(\mathbf{x})} \ar@{-}[dl] \\
&& \epsilon(R_\mathbf{v})}
\end{xymatrix}
\end{align*}
with $\epsilon(F(\mathbf{x})) \cap \langle \langle \epsilon(\mathbf{v}) \rangle \rangle_{F(\mathbf{x})} \cong \langle \langle \epsilon(\mathbf{v}) \rangle \rangle_{\epsilon(F(\mathbf{u}))} \cong \epsilon (R_\mathbf{v})$. Let $R_{\epsilon(\mathbf{v})}$ denote the normal closure of $\epsilon(\mathbf{v})$ in $F(\mathbf{x})$, where the image of $G$ is isomorphic to $\epsilon(F(\mathbf{u})) R_{\epsilon(\mathbf{v})}  \big/ R_{\epsilon(\mathbf{v})}$. Thus by the second isomorphism theorem, $\epsilon(G) \cong \epsilon(F(\mathbf{u})) \big/ \epsilon(R_\mathbf{v})$. Finally, the normal closure of $\epsilon(F(\mathbf{u})) R_{\epsilon(\mathbf{v})}$ in $F(\mathbf{x})$ is $\langle \langle \epsilon(\mathbf{u}) \rangle \rangle_{F(\mathbf{x})}$ since $R_{\epsilon(\mathbf{v})} \leq \langle \langle \epsilon(\mathbf{u}) \rangle \rangle_{F(\mathbf{x})}$, so $\langle \langle \epsilon(G) \rangle \rangle_{\hat{G}} \cong \langle \langle \epsilon(\mathbf{u}) \rangle \rangle_{F(\mathbf{x})} / R_{\epsilon(\mathbf{v})}$.
\qed
\end{proof}

Let us illustrate the situation using the preceding examples. Example \ref{Example: image normal} has $\epsilon(G) \cong \mathbb{Z}_3$ a normal subgroup of $\hat{G} \cong \mathbb{Z}_9$. Working in $F(\mathbf{x})$, with $\bar{N}= \langle \epsilon(\mathbf{u}) \rangle$ and $R=\langle \langle \epsilon(\mathbf{v}) \rangle \rangle_{F(\mathbf{x})}$,
\begin{align*}
x_0 u x_0^{-1} u^{-1} ((\theta(u))^2 u^{-1})^{-1}&= x_0^3x_1^{-1}x_0^{-1}(\theta(u))^{-2} \\
& = x_0^3x_1^{-3}(\theta(u))^{-1}
\end{align*}
is in $\bar{N}R$, so $x_0 u x_0^{-1}u^{-1} \in \bar{N}R$, too. Therefore $x_0$ commutes with $u$ modulo $\bar{N}R$. After similarly accounting for commutivity among $x_1$ and $\theta(u)$, it follows that $N= \langle \langle \epsilon(\mathbf{u}) \rangle \rangle_{F(\mathbf{x})}$ has only one coset modulo $\bar{N}R$. In Example \ref{Example: Dihedral}, where $\epsilon(G) \cong \mathbb{Z}_2$ and $\hat{G}$ is the dihedral group of order $2m$ for $m=2^n-1$, the left cosets of $R$ in $N= F(\mathbf{x})$ (since $H=1$) are represented by $x_0^{\delta}(x_0x_1)^j$, for $0 \leq j< 2^n-1$ and $\delta=0,1$. Passing to $\bar{N}R$, the left cosets in $F(\mathbf{x})$ are of the form $(x_0x_1)^{j} \bar{N}R$ for $0 \leq j< 2^n-1$. This is because for $\delta=0$ the coset 
$$(x_0x_1)^j \bar{N}R= (x_0x_1)^j (x_0x_1x_0^{-1})(x_0^2)\bar{N}R=(x_0x_1)^{j+1}x_0 \bar{N}R= x_0(x_0x_1)^{m-(j+1)}\bar{N}R$$
for $0<j<2^n-1$. 

\subsection{Finiteness problem for $\hat{G}$} \label{Subsection: finiteness}

Consider now the finiteness problem for the group $\hat{G}$. Our examples will show it is possible for $\hat{G}$ to be finite when both $G$ and $H$ are nontrivial and finite. However, this is rarely the case, for $\hat{G}$ is infinite if additionally $\hat{\mathcal{G}}$ is aspherical by Theorem \ref{Theorem: relative aspherical}. Experiments in GAP show that even if $G$ is finite cyclic and $H=1$, then typically $\hat{G}$ is infinite. See Example \ref{Example: normally generated by involutions} for a specific instance. 

We begin this section with some simple consequences of Theorem \ref{Theorem: intro short exact} in terms of the finiteness problem and a couple of remarks. After that, we will discuss those finite cyclically presented groups found from a computational search. The list of finite groups recorded in Figure \ref{Table: 1} are the results of the search. Additional finite groups $\hat{G}$ appear in Subsection \ref{Subsection: iterates} and Lemma \ref{Lemma: GL23}.

\begin{Corollary} \label{Corollary: quotient} Let $\hat{G}$ be a group given by a composite presentation $\mathcal{P}=\langle \mathbf{x}: \epsilon(\mathbf{v}) \rangle$, with $G \cong \langle \mathbf{u}: \mathbf{v} \rangle$, $H \cong \langle \mathbf{x}: \epsilon(\mathbf{u}) \rangle$ and substitution homomorphism $\epsilon: F(\textbf{u})\rightarrow F(\textbf{x})$.
\begin{itemize} 
\item[(i)]  If $G=H=1$, then $\hat{G}=1$.
\item[(ii)]  If $H$ is infinite (resp. SQ-universal), then $\hat{G}$ is infinite (resp. SQ-universal).
\item[(iii)]  If $H=1$, then $\hat{G}$ is normally generated by $\mathrm{Im}(\epsilon)$. In particular, $\hat{G}$ is finitely generated by conjugates of the substitutions $\{ \epsilon(u_0), \epsilon(u_1), \dots, \epsilon(u_{k-1}) \} \subseteq \hat{G}$.
\item[(iv)] If $G$ is infinite and $\epsilon$ is injective, then $\hat{G}$ is infinite.
\end{itemize}
\end{Corollary}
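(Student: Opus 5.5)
All four parts follow from the exact sequence of Theorem \ref{Theorem: intro short exact},
$$1 \rightarrow N \xrightarrow{i} \hat{G} \xrightarrow{\pi} H \rightarrow 1, \qquad N=\langle \langle \epsilon(G) \rangle \rangle_{\hat{G}},$$
together with the relative presentation $\hat{\mathcal{G}}$ of Lemma \ref{Lemma: relative hat G}. I would treat the parts in order.

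For (i), take $G=1$. Then $\epsilon(G)$ is trivial, hence so is its normal closure $N$. Exactness gives $\hat{G}\cong H$, and since $H=1$ we get $\hat{G}=1$. A more direct argument also works: if $G=1$, the relative presentation becomes $\langle \mathbf{x}: \epsilon(u_i)\rangle$ with each $u_i$ trivial, which presents $H$.

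For (ii), the point is that $\pi:\hat{G}\twoheadrightarrow H$ is surjective. If $H$ is infinite, then $\hat{G}$ surjects onto an infinite group and so is infinite. If $H$ is SQ-universal, every countable group embeds in some quotient $H/M$. Composing with $\pi$ shows $H/M\cong \hat{G}/\pi^{-1}(M)$, which is a quotient of $\hat{G}$. Hence $\hat{G}$ is SQ-universal.

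For (iii), take $H=1$. Exactness gives $N=\hat{G}$, so $\hat{G}$ is the normal closure of $\mathrm{Im}(\epsilon)$. The subgroup $\mathrm{Im}(\epsilon)$ is generated by the images $\epsilon(u_0),\dots,\epsilon(u_{k-1})$, since $G$ is generated by $\mathbf{u}$. Therefore $\hat{G}$ is generated by the $\hat{G}$-conjugates of these finitely many elements. In the finitely generated case I would read ``finitely generated by conjugates'' as follows: $\hat{G}$ is generated by $\mathbf{x}$, and every element of $\hat{G}$ is a product of conjugates of the $\epsilon(u_i)^{\pm1}$ by words in $\mathbf{x}$.

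For (iv), if $\epsilon$ is injective then $\epsilon(G)\cong G$ is an infinite subgroup of $\hat{G}$, so $\hat{G}$ is infinite.

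I expect no real obstacle here. The only step needing care is the precise meaning of ``finitely generated'' in (iii), which I would settle as just described.
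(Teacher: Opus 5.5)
Your proposal is correct and takes the paper's approach: the paper states the corollary as a direct consequence of the exact sequence in Theorem \ref{Theorem: intro short exact}, and part (iv) is immediate from injectivity. In (iii), the intended reading (used later for $Q(k,l)$) is that finitely many conjugates generate $\hat{G}$; this follows at once because each of the finitely many generators $x_j$ is a finite product of conjugates of the $\epsilon(u_i)^{\pm 1}$.
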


With regards to Corollary \ref{Corollary: quotient} [i], it is well known that $\hat{G}=1$ if $G=H=1$ are trivial (see Section \ref{Section: background} for references). More generally, it may be possible for $\hat{G}$ to be trivial with nontrivial $G$, in which case $\epsilon$ fails to be injective (and is the trivial map) and $H=1$. Recall that $\hat{G}$ is obtained from $G$ by adjoining the generators $\textbf{x}$ and relations $\textbf{u}=\epsilon(\textbf{u})$ to $\mathcal{G}=\langle \textbf{u}: \textbf{v} \rangle$ by Lemma \ref{Lemma: relative hat G}. From this perspective, we are considering the problem of whether an adjunction of $G$ may result in the trivial group. We recall the Kervaire conjecture which states that if $G$ is any nontrivial group, then $G \ast F(x) / \langle \langle w \rangle \rangle $ is nontrivial for any $w \in G \ast F(x)$ (where $x$ is not a symbol in $G$). Thus if a group presentation is partially composite with one adjunction (e.g. Example \ref{Example: partial comp}), with the adjunction such that $H=1$, then such an example may be used to test the Kervaire conjecture. The shift extension of a cyclically presented group having composite relations provides a source for such presentations (see Lemma \ref{Lemma: E relative}). However, shift extensions always retract onto $\mathbb{Z}_n$, so a counterexample to the Kervaire conjecture cannot be obtained in this way. On the other hand, this does not rule out the possibility that the cyclically presented group $G_n(v \circ u)$ itself is trivial; here $G_n(v \circ u)$ is obtained from $G_n(u)$ by $n$ adjunctions.

\begin{Example} It is possible for $\epsilon(G)$ to be trivial even when $G$ and $\hat{G}$ are not. Consider the presentation $\mathcal{G}=\mathcal{G}_3(u_0u_1)$ for $G \cong \mathbb{Z}_2$. Define $\epsilon$ by $u_i \mapsto x_ix_{i+1}^{-1}$ for $0\leq i<3$. Then $\hat{G}$ has relators $\epsilon(u_iu_{i+1})= x_ix_{i+1}^{-1}x_{i+1}x_{i+2}^{-1}=x_ix_{i+2}^{-1}$ for $0\leq i <2$ so that $x_0=x_1=x_2$ in $\hat{G} \cong \mathbb{Z}$. The generator of $G$ maps to $\epsilon(u_0)= x_0x_1^{-1}=1$ and so $\epsilon(G)=1$.
\end{Example}

Finiteness of a group $\hat{G}$ has the following interpretation in terms of homomorphisms $\epsilon: F(\textbf{u})\rightarrow F(\textbf{x})$ and subgroups of $F(\textbf{x})$. Let $\epsilon: F(\textbf{u}) \rightarrow F(\textbf{x})$. If $H \cong \langle \textbf{x}: \epsilon(\textbf{u}) \rangle$ is finite, then it is possible for there to exist $\textbf{v} \subseteq F(\textbf{u})$ with finite $\hat{G}$, in which case $F(\textbf{x}) / \langle \langle \epsilon(\textbf{v}) \rangle \rangle$ is finite. That is, there may exist a subgroup $\langle \textbf{v} \rangle \leq F(\textbf{u})$ whose image normally generates a subgroup of finite index in $F(\textbf{x})$. On the other hand, if $H$ is infinite, then the image of every subgroup of $F(\textbf{u})$ normally generates a subgroup of infinite index in $F(\textbf{x})$ by Corollary \ref{Corollary: quotient} [ii]. Thus when $H$ is infinite, $\epsilon$ fails to be surjective in a strong sense. Compare with Lemma \ref{Lemma: finite iterate} for a substitution which fails to be surjective in a weak sense.

\textbf{Computational Search} The finite groups obtained from a selective, non- exhaustive search of composite presentations of the form $G_n(v \circ u)$ are recorded in Figure \ref{Table: 1}. Such a search has considerable computational limitations for the word length of the relation $v \circ u$ is $|v||u|$ in most cases. Search candidates included certain small finite groups $G_n(v)$ with word length $|v| \leq 3$ and $n\leq 5$ (e.g. cyclic, $Q_8$, $SL(2,3)$, $SL(2,5)$), and for $G_n(u)$ the same selection of groups as well as well-known cyclic presentations of the trivial group. This search therefore depended on the solution to the finiteness problem for cyclically presented groups $G_n(w)$ with word length less than or equal to 3; for the purpose of our informal census we cite \cite{EdjWillThree} and \cite{WillFibRev} and refer the interested reader to the references therein. Also, note that the case of odd $n$ with $v=u_0u_1$, $u=x_0x_1$, and $G_n(v \circ u) \cong \mathbb{Z}_4$ was already identified in \cite{BogParFour} in solving the finiteness problem for cyclically presented groups $G_n(w)$ where $w$ has positive exponents and length $4$. To the best of the author's knowledge, all other presentations for finite $\hat{G}$ are new.

The search itself was performed using GAP \cite{GAP} and standard tests for finiteness. Here one may use the Size command to identify finite groups, and AbelianInvariants to identify those groups having infinite abelianizations. In case the abelianization is finite, it is possible to perform Newman's infinity criterion \cite{NewInf} implemented via NewmanInfinityCriterion in order to prove the group $\hat{G}$ is infinite. Additionally, it may be possible to work further down the derived series for $\hat{G}$ and repeat these tests. Newman's criterion applied to $\hat{G}'$ and prime divisors of $|\hat{G}' \big/ \hat{G}''|$ frequently succeeded in proving $\hat{G}$ is infinite, so was regularly implemented within a larger ad hoc scheme to solve the finiteness problem for $\hat{G}$. See Example \ref{Example: normally generated by involutions}. The success in working with $\hat{G}'$ is likely traced to Proposition \ref{Prop: free subgroups}, where the subgroup $\epsilon(\mathbf{u}) \langle \langle \epsilon(\mathbf{v}) \rangle \rangle_{F(\mathbf{x})}$ of $F(\mathbf{x})$ contains the subgroup $[\bar{N}, F(\mathbf{x})]$ of the commutator subgroup of $F(\mathbf{x})$, for $\bar{N}$ the subgroup of $F(\mathbf{x})$ generated by $\epsilon(\mathbf{v})$. Finally, there is the StructureDescription command which can be used to identify small finite groups; this was successful for all finite groups in this paper.

\begin{Example} \label{Example: normally generated by involutions} Let $v=u_0 u_1$, $u=x_0 x_3 x_{0}^{-1} x_3^{-2}$, and consider $Q=G_9(v \circ u)$ where $v \circ u =x_0x_3x_0^{-1}x_3^{-2}x_1x_4x_1^{-1}x_4^{-2}$. Then $G_9(v) \cong \langle u_0 \rangle \cong \mathbb{Z}_2$, $G_9(u)=1$, and $Q$ is normally generated by $\epsilon_u(u_0)=u$. The group $Q$ is infinite since its commutator subgroup $Q'$ has the quotient $Q'/Q'' \cong (\mathbb{Z}_7)^3$, and $Q'$ satisfies Newman's infinity criterion \cite{NewInf} with $p=7$. Thus $Q$ is an infinite group that is generated by conjugates of $u$, all involutions. 
\end{Example}

\begin{figure}
\centering
\begin{tabular}{ |c|c|c|c|c|c| } 
\hline
 $n>0$ & $u \in F(\mathbf{x})$ & $H=G_n(u)$ & $v \in F(\mathbf{u})$ & $G=G_n(v)$ & $\hat{G}=G_n(v \circ u)$  \\ 
\hline
 odd $n$ & $x_0x_1x_0^{-1}$ & $1$ & $u_0u_1$ & $\mathbb{Z}_2$ & $D_{m}, m=2^n-1$  \\  
\hline
 $2$ & $x_0x_1x_0^{-1}$ & $1$ & $u_0^2u_1$ & $\mathbb{Z}_3$ & $\mathbb{Z}_7 \rtimes \mathbb{Z}_3$  \\  
 \hline
 $2$ & $x_0x_1x_0^{-1}$ & $1$ & $u_0^2u_1^{-1}$ & $\mathbb{Z}_3$ & $SL(2,3)$  \\ 
 \hline
 odd $n$ & $x_0x_1$ & $\mathbb{Z}_2$ &  $u_0u_1$ & $\mathbb{Z}_2$ & $\mathbb{Z}_4$\\ 
 \hline
 $2$ & $x_0^2x_1$ & $\mathbb{Z}_3$ & $u_0^2u_1$ & $\mathbb{Z}_3$ & $\mathbb{Z}_{19} \rtimes \mathbb{Z}_9 $ \\ 
 \hline
 $2$ &  $x_0^2x_1^{-1}$ & $\mathbb{Z}_3$ & $u_0^2u_1$ & $\mathbb{Z}_3$ & $\mathbb{Z}_9 \rtimes \mathbb{Z}_3$ \\ 
 \hline
 $2$ &  $x_0^2x_1^{-1}$ & $\mathbb{Z}_3$ & $u_0^2u_1^{-1}$ & $\mathbb{Z}_3$ & $\mathbb{Z}_9$ \\ 
 \hline
\end{tabular} \\
\caption{Finite groups having composite presentations of the form $\mathcal{G}_n(v \circ u)$.}
\label{Table: 1}
\end{figure}

\textbf{Results} We summarize and interpret the finite groups identified from our search as follows. Concerning normality, two finite cases with $\hat{G} \cong \mathbb{Z}_4$ and $\mathbb{Z}_9$ have the property that $\epsilon(G)$ is normal in $\hat{G}$; except for certain trivial cases with $\epsilon$ an isomorphism of free groups, no other examples (including infinite ones) were identified where $\epsilon(G)$ is normal in $\hat{G}$. 

There are two infinite families of composite presentations which are finite for odd $n$; that they are in fact finite for all odd $n>0$ follows from Example \ref{Example: Dihedral} and \cite[Theorem 7.2]{BogParFour} (see also Lemma \ref{Lemma: another finite iterate}). In all other cases, we were unable to find a finite group with $n>2$ either because the group under consideration was infinite or the GAP calculations were inconclusive. 

The two cases with $n=2$, $u=x_0x_1x_0^{-1}$, and $G_2(v) \cong \mathbb{Z}_3$ produce different groups $\hat{G}$; thus $\hat{G}$ is not an invariant of the pair $(G, \epsilon )$. Similarly, $\hat{G}$ is not an invariant of $\mathcal{G}$ and $H$, as can be seen by the examples with $\mathcal{G}=\mathcal{G}_n(u_0^2u_1)$ and $H \cong \mathbb{Z}_3$.

\subsection{Groups of weight one} \label{Subsection: normal closure}

Recall that the weight of a group is the least number of elements required to normally generate the group, where the trivial group is said to have weight zero. Thus if $G$ is a group of weight one and the substitution $\epsilon$ is such that $H=1$, then $\hat{G}$ has weight one or is trivial. The latter scenario may be avoided if we restrict ourselves to those groups $G$ given by a presentation $\mathcal{G}$ of deficiency one, where the deficiency of a group presentation is the number of generators minus the number of relators, and the deficiency of a group is the maximum over all presentations. The class of groups obtained by substitutions of the trivial group into all presentations of deficiency one is still very general. Thus we will further simplify the situation by considering only those presentations $\mathcal{G}$ in the family of presentations $\mathcal{D}=\mathcal{D}(\mathbb{Z}, 1)$ for $\mathbb{Z}$ having deficiency one. Define $\mathcal{B}=\mathcal{B}(\mathcal{D}, 1)$ to be the set of all composite group presentations obtained by composing a presentation $\mathcal{G} \in \mathcal{D}$ with a balanced presentation $\mathcal{H}$ of the trivial group whose number of generators equals that of $\mathcal{G}$.

\begin{Lemma} \label{Lemma: Z by 1} Every presentation $\mathcal{P} \in \mathcal{B}$ defines a group $\hat{G}$ with weight one, infinite cyclic abelianization, and deficiency one. Furthermore, the natural map $\epsilon: \mathbb{Z} \rightarrow \hat{G}$ is injective.
\end{Lemma}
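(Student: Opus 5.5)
Set $\mathcal{G}=\langle \mathbf{u}: \mathbf{v}\rangle\in\mathcal{D}$ with $|\mathbf{u}|=k$, $|\mathbf{v}|=k-1$, and $\mathcal{H}=\langle \mathbf{x}:\epsilon(\mathbf{u})\rangle$ balanced with $|\mathbf{x}|=k$ and $H=1$. Then $\mathcal{P}=\langle \mathbf{x}:\epsilon(\mathbf{v})\rangle$ has $k$ generators and $k-1$ relators, so it has deficiency one. Since $\hat G^{ab}$ is abelian and generated by $k$ elements with only $k-1$ relations, it has torsion-free rank at least one; hence $\hat G$ is infinite and the deficiency of $\hat G$ is at least one. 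For the upper bound, I would use the standard fact that a group of deficiency $d$ has first Betti number at least $d$. Once $\hat G^{ab}\cong\mathbb{Z}$ is established, this forces the deficiency of $\hat G$ to be exactly one.

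\emph{Abelianization.} I would abelianize $\hat G$ via Theorem \ref{Theorem: intro short exact}, or more directly via the relative presentation of Lemma \ref{Lemma: relative hat G}: $\hat G\cong (G\ast F(\mathbf{x}))/\langle\langle \epsilon(u_i)u_i^{-1}\rangle\rangle$. Abelianizing gives $\hat G^{ab}\cong (\mathbb{Z}\oplus\mathbb{Z}^k)/\langle \bar u_i - A e_i\rangle$. Here $\bar u_i\in G^{ab}=\mathbb{Z}$ is the image of $u_i$, and $A$ is the exponent-sum matrix of $\epsilon(\mathbf{u})$, i.e.\ the relation matrix of $\mathcal{H}$. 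Because $H=1$ and $\mathcal{H}$ is balanced, $A$ is a square integer matrix with $\mathbb{Z}^k/A\mathbb{Z}^k=0$, so $A$ is invertible over $\mathbb{Z}$. The relations therefore let me solve $e=A^{-1}(\bar u)$ uniquely in terms of the $\mathbb{Z}$ factor, and the quotient is $\mathbb{Z}=G^{ab}$. This argument also shows that the composite $G\to\hat G\to\hat G^{ab}$ is an isomorphism $\mathbb{Z}\to\mathbb{Z}$.

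\emph{Injectivity and weight one.} The last observation gives injectivity of $\epsilon:\mathbb{Z}\to\hat G$, since a homomorphism whose composite with abelianization is injective must itself be injective. It also shows that $\epsilon(G)$ is infinite cyclic, generated by the image $t$ of a generator of $G=\mathbb{Z}$. By Corollary \ref{Corollary: quotient}(iii), since $H=1$, $\hat G$ is normally generated by $\epsilon(G)=\langle t\rangle$, hence by the single element $t$. Since $\hat G^{ab}\cong\mathbb{Z}\neq 0$, $\hat G$ is nontrivial, so its weight is exactly one.

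\emph{Expected obstacle.} The only delicate point is the abelianization step. I need to check carefully that $\mathcal{H}$ being balanced with $H=1$ makes $A$ unimodular. Equivalently, in the direct formulation, $\hat G^{ab}$ is computed from $\mathcal{G}$'s relation matrix composed with $A$, giving $\mathbb{Z}^k/(M A)$, where $M$ is the $(k-1)\times k$ relation matrix of $\mathcal{G}$ with cokernel $\mathbb{Z}$. Right multiplication by the unimodular matrix $A$ preserves the cokernel. I would present the argument in this matrix form to avoid any confusion in the relative-presentation bookkeeping.
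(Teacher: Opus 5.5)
Your proof is correct, but it runs in the opposite order from the paper's.

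The paper's route is as follows.
\begin{itemize}
\item It gets weight one first from Theorem \ref{Theorem: intro short exact}: with $H=1$, $\hat G$ is normally generated by $\epsilon(\mu)$ for a generator $\mu$ of $G\cong\mathbb{Z}$.
\item It deduces that $\hat G$ has deficiency exactly one from $\mathcal{P}$ together with weight one. Implicitly, weight one makes $\hat G^{ab}$ cyclic, so $\beta_1(\hat G)\le 1$.
\item It cites the fact that weight one plus deficiency one forces $\hat G^{ab}\cong\mathbb{Z}$.
\item It concludes that the normal generator $\epsilon(\mu)$, whose image must generate $\hat G^{ab}$, has infinite order.
\end{itemize}
No exponent-sum bookkeeping appears.

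You instead compute $\hat G^{ab}$ directly as the cokernel of $MA$ with $A\in GL_k(\mathbb{Z})$. This yields $\hat G^{ab}\cong\mathbb{Z}$ and, at the same time, that $\epsilon$ induces an isomorphism $G^{ab}\to\hat G^{ab}$. Injectivity of $\epsilon$ then follows without using normal generation at all. Deficiency one follows from $\mathrm{def}\le\beta_1$, the same bound the paper uses tacitly.

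The paper's argument is shorter and leans on $H=1$ at every step. Yours is self-contained and more informative. Your abelianization step only uses that $\mathcal{H}$ is balanced with $H^{ab}=0$. So, for any relator set $\mathbf{v}$, it shows $\epsilon$ induces $G^{ab}\cong\hat G^{ab}$ whenever $H$ is merely perfect. Full triviality of $H$ is needed only for the weight-one claim.
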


\begin{proof} Let $\mathcal{P} \in \mathcal{B}$ and $\mathcal{G}=\langle \textbf{u}: \textbf{v} \rangle$ the corresponding presentation for $G \cong \mathbb{Z}$ of deficiency one. Let $\mu \in F(\textbf{u})$ be a generator of $G \cong \mathbb{Z}$. Then by Theorem \ref{Theorem: intro short exact}, $\hat{G}$ is normally generated by $\epsilon(\mu)$. Now $\mathcal{P}$ has deficiency one and $\hat{G}$ has weight one, so $\hat{G}$ has deficiency one. Then $\hat{G}$ has weight one and deficiency one which implies $\hat{G}^{ab} \cong \mathbb{Z}$ (e.g. \cite[p. 17]{Hill2andgroup}). Consequently, $\epsilon(\mu)$ has infinite order in $\hat{G}$ from which it follows that $\epsilon: G \rightarrow \hat{G}$ is injective. 
\qed
\end{proof}

We identify the class of groups $\mathcal{G}(\mathcal{B})$ defined by presentations in $\mathcal{B}$ as follows. For $n>0$, let $K_n$ denote the class of all $n$-knot groups, where a group is an $n$-knot group if it is the fundamental group of the complement of an embedding of $S^n$ in $S^{n+2}$. It is well known that $K_1$ is a proper subset of $K_2$ and we recall two additional facts as follows. First, every classical knot group $G \in K_1$ admits a deficiency one Wirtinger presentation, i.e. each relation is of the form $x_j=w_jx_0w_j^{-1}$ where $w_j \in F(\textbf{x})$ and $0<j<|\textbf{x}|$ (e.g. \cite[Chapter 3]{RolfKnot}). Second, that groups satisfying the conclusion of Lemma \ref{Lemma: Z by 1} are 2-knot groups. This is a consequence of the Kervaire conditions \cite{KerCond}, which state that the class of $n$-knot groups for $n\geq 3$ are precisely those finitely presented groups with weight one, infinite cyclic abelianization, and trivial second homology. The requirement for a deficiency one presentation in defining $\mathcal{B}$ is stronger than the homology condition and sufficient for the groups to be 2-knot groups. For details and additional references see \cite[Section 14.6]{Hill2Survey}. 

\begin{Theorem} \label{Theorem: Z by 1} The containments $K_1 \subset \mathcal{G}(\mathcal{B}) \subset K_2$ are all proper.
\end{Theorem}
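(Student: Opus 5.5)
We prove the four claims in turn: the two containments, then the two strictnesses.

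\emph{The containment $\mathcal{G}(\mathcal{B}) \subseteq K_2$.} By Lemma \ref{Lemma: Z by 1}, every group defined by a presentation in $\mathcal{B}$ is finitely presented, has weight one, has infinite cyclic abelianization, and has deficiency one. A deficiency one presentation of such a group has a presentation complex with Euler characteristic $0$ and $H_1 \cong \mathbb{Z}$. Hence $H_2$ of the complex is free abelian of rank $0$, and the group has trivial $H_2$. These are Kervaire's conditions \cite{KerCond}, and in dimension two the deficiency one case is realized by 2-knots (see \cite[Section 14.6]{Hill2Survey}). So the group lies in $K_2$.

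\emph{The containment $K_1 \subseteq \mathcal{G}(\mathcal{B})$.} Let $G \in K_1$. It has a deficiency one Wirtinger presentation $\langle x_0,\dots,x_{n-1} : x_j w_j x_0^{-1} w_j^{-1},\ 0<j<n \rangle$. I take $\mathbf{u}=\{u_0,\dots,u_{n-1}\}$, $\mathcal{G}=\langle \mathbf{u} : u_j u_0^{-1},\ 0<j<n \rangle$ and $\epsilon(u_0)=x_0$. The group of $\mathcal{G}$ is $\mathbb{Z}$, and $\mathcal{G}$ has deficiency one, so $\mathcal{G} \in \mathcal{D}$. For $j>0$ I set $\epsilon(u_j)=w_j^{-1}x_jw_j$. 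Then $\epsilon(u_ju_0^{-1})=w_j^{-1}x_jw_jx_0^{-1}$, which is a cyclic permutation of the Wirtinger relator. Cyclic permutation does not change the presented group, and it does not change the member of $\mathcal{B}$ up to the usual equivalence; if necessary I would instead conjugate $u_j$'s image to get the relator literally. The substitution presentation is $\mathcal{H}=\langle \mathbf{x} : x_0,\ w_j^{-1}x_jw_j \rangle$. It is balanced, with $n$ generators and $n$ relators. It presents the trivial group, since each $x_j$ is conjugate to a killed element and $x_0=1$. So $\mathcal{P} \in \mathcal{B}$ and $G \in \mathcal{G}(\mathcal{B})$.

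\emph{Strictness on the right.} This follows from the remark already made in the paper: there are 2-knot groups of deficiency zero, i.e.\ with no deficiency one presentation. For example, Hillman's survey lists 2-knot groups with deficiency $\le 0$, such as groups with nontrivial finite commutator subgroup, like $Q(8)\rtimes\mathbb{Z}$-type knot groups, or groups whose deficiency is bounded by $H_2$ or $\pi_2$ obstructions. Since every group in $\mathcal{G}(\mathcal{B})$ has deficiency one by Lemma \ref{Lemma: Z by 1}, none of these lies in $\mathcal{G}(\mathcal{B})$. I would cite a specific such group from \cite{Hill2Survey}.

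\emph{Strictness on the left.} This is the main obstacle: I need an explicit group in $\mathcal{G}(\mathcal{B})$ that is not a classical knot group. The plan is to use the family $Q(k,l)$ of Section \ref{Section: QKL}, built from Higman's presentation $\mathcal{Q}$ of the trivial group composed with a deficiency one presentation of $\mathbb{Z}$. I would pick a member whose commutator subgroup is visibly not a 1-knot commutator subgroup. Two invariants are natural to try. The first is the Alexander polynomial of the abelianization cover, computed by Fox calculus. If it fails to be symmetric, or fails $\Delta(1)=\pm1$ with symmetry, the group is not a classical knot group. The second, if the polynomial looks classical, is a finite (non-free) torsion commutator quotient, or a finite-order element; classical knot groups are torsion free. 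I would first compute $Q'/Q''$ as a $\mathbb{Z}[t^{\pm1}]$-module, using GAP as in Subsection \ref{Subsection: finiteness}, and look for a nonsymmetric Alexander module. That alone certifies the example lies outside $K_1$.
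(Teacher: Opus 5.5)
Your two containments and the strictness of $\mathcal{G}(\mathcal{B})\subset K_2$ follow the paper. That is, you use the substitution $u_0\mapsto x_0$, $u_j\mapsto w_j^{-1}x_jw_j$ on a Wirtinger presentation, then Lemma~\ref{Lemma: Z by 1} with Kervaire's criterion, and finally a cited 2-knot group of deficiency $\neq 1$.

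The gap is the strictness of $K_1\subset\mathcal{G}(\mathcal{B})$. This needs an explicit witness, and you describe a search but never name a group or compute an invariant, so this half of the theorem is unproved.

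Moreover, the Alexander polynomial test you propose cannot succeed on the family you point to. The groups $Q(k,l)$ built from Higman's presentations of the trivial group that lie in $\mathcal{G}(\mathcal{B})$ are those with $k\le 3$ and $kl$ even (Theorem~\ref{Theorem: Q structure}). Apart from $Q(1,l)\cong\mathbb{Z}$, each is a tree of groups. Its vertex groups are copies of $T$ ($k=2$, $l$ odd), $\bar T$ ($k=2$, $l$ even) or $\bar T_3$ ($k=3$, $l$ even). Its edge groups are generated by elements $\theta^a(u)$, each mapping to a generator of $Q^{\mathrm{ab}}\cong\mathbb{Z}$. Mayer--Vietoris on the infinite cyclic cover therefore splits the Alexander module as the direct sum of the vertex modules.

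Since $x_i\mapsto t^{(-1)^i}$, when $l$ is even half the vertex groups see $t$ and half see $t^{-1}$, so the polynomial is $\Delta(t)^{l/2}\Delta(t^{-1})^{l/2}$. When $k=2$ and $l$ is odd, the factors are trefoil polynomials; indeed $Q(2,1)\cong T$ is itself in $K_1$. Either way the result is symmetric. Your fallback condition $\Delta(1)=\pm1$ gives nothing either, since $H_1\cong\mathbb{Z}$ forces it.

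What is missing is to drop to a single vertex group, as the paper does in Example~\ref{Example: T bar not K1}. The group $\bar T=\langle x_0,x_1: x_0x_1x_0^{-1}x_1^{-2}=x_1x_0x_1^{-1}x_0^{-2}\rangle$ is the composite of $\langle u_0,u_1: u_0=u_1\rangle$ with Higman's balanced two-generator presentation of the trivial group, so $\bar T\in\mathcal{G}(\mathcal{B})$. Take $r=x_0x_1x_0^{-1}x_1^{-2}x_0^{2}x_1x_0^{-1}x_1^{-1}$ and $x_0,x_1\mapsto t$. After abelianizing, Fox's fundamental formula gives $\partial r/\partial x_1=-\partial r/\partial x_0$, so the Alexander module is $\mathbb{Z}[t^{\pm1}]/(\partial r/\partial x_0)$ with $\partial r/\partial x_0\mapsto 2-2t+t^{-1}$. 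Thus $\Delta(t)=2t^2-2t-1$ up to units, which is not symmetric, so $\bar T\notin K_1$. The paper's $T_3$ and $\bar T_3$ would also work.
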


The following example is of a group in $\mathcal{G}(\mathcal{B})$ which is not a 1-knot group, and we prove the remainder of the theorem thereafter.

\begin{Example} \label{Example: T bar not K1} The group $\bar{T}$ given by 
$$\bar{\mathcal{T}} = \langle x_0, x_1: x_0x_1x_0^{-1}x_1^{-2}=x_1x_0x_1^{-1}x_0^{-2} \rangle $$
has $\bar{\mathcal{T}} \in \mathcal{B}$ since it equals the substitution on $\langle u_0, u_1: u_0=u_1 \rangle$ for $\mathbb{Z}$ given by $u_i \mapsto x_ix_{i+1}x_i^{-1}x_{i+1}^{-2}$ for $i=0,1$, where the substitution group $H=1$ is given by Higman's presentation of the trivial group on $n=2$ generators. We show the Alexander polynomial associated to $\bar{T}$ is not symmetric, from which it follows that $\bar{T}$ is not the group of a 1-knot (e.g. \cite[7D, 8C]{RolfKnot}).

Writing the relation as a relator and conjugating by $x_0$ gives
$$\bar{\mathcal{T}} \cong \langle x_0, x_1: x_0^2x_1x_0^{-1}x_1^{-2}x_0^2x_1x_0^{-1}x_1^{-1}x_0^{-1} \rangle$$
and adjoining the generator $a$ by the relation $a=x_1x_0^{-1}$ and then eliminating $x_1$ gives
\begin{align*}
\bar{\mathcal{T}} & \cong \langle x_0, a: x_0^2(ax_0)x_0^{-1}(ax_0)^{-2}x_0^2(ax_0)x_0^{-1}(ax_0)^{-1}x_0^{-1} \rangle \\
& = \langle x_0, a: (x_0^2ax_0^{-2})(x_0a^{-1}x_0^{-1})a^{-1}(x_0^2ax_0^{-2})(x_0a^{-1}x_0^{-1}) \rangle
\end{align*}
so that the polynomial is $p(t)=2t^2-2t-1$ where $p(t) \neq p(1/t)$.
\end{Example}

\begin{proof} To see $K_1 \subseteq \mathcal{G}(\mathcal{B})$, let $\mathcal{P}$ be a Wirtinger presentation of deficiency 1, i.e. $\mathcal{P}=\langle x_0, x_1, \dots, x_{n-1}: w_j^{-1}x_jw_j=x_0, \ 0<j<n \rangle$ with each $w_j \in F(\textbf{x})$. Define $\mathcal{G}= \langle u_0, u_1, \dots, u_{n-1}: u_j=u_0, \ 0<j<n \rangle$ and $\epsilon:F(\textbf{u})\rightarrow F(\textbf{x})$ by $u_0 \mapsto x_0$ and $u_j \mapsto  w_j^{-1}x_jw_j$ for $0<j<n$. Then $\mathcal{G}$ is a presentation for $\mathbb{Z}$ and $\mathcal{H}=\langle x_0, x_1, \dots, x_{n-1}: x_0, w_j^{-1}x_jw_j, \ 0<j<n \rangle$ is a presentation for the trivial group. Thus $\mathcal{P}  \in \mathcal{B}$. The fact that $K_1$ is a proper subset of $\mathcal{G}(\mathcal{B})$ follows from Example \ref{Example: T bar not K1}. For the other containment, Lemma \ref{Lemma: Z by 1} provides that $ \mathcal{G}(\mathcal{B}) \subseteq K_2$. The subset is proper since there exist 2-knot groups having deficiency not equal to one \cite[I.2]{KerCond}.
\qed
\end{proof}

While the presentations $\mathcal{B}$ use substitutions corresponding to presentations of the trivial group, compare with \cite{RatRoot} and \cite{LieRoot} who adjoin roots to a meridian of a 1-knot group - and so the substitution group is finite cyclic- and also obtain 2-knot groups which are not 1-knot groups. Notice too that if we vary $\mathcal{D}=\mathcal{D}(\mathbb{Z},1)$ and maintain $H=1$, then we arrive at classes of groups related to the fundamental group of other codimension 2 smooth embeddings (see \cite[Section 2]{GonGorSimUnsol}).

Recall that the class of groups having infinite cyclic abelianization and which are given by a deficiency one Wirtinger presentation are precisely the class of ribbon 2-knot groups \cite{YajWirtKnot}. A group is a ribbon 2-knot group if it is the fundamental group of the complement of a smooth embedding of a so-called ribbon disk $D\rightarrow D^4$. Thus in proving $K_1 \subseteq \mathcal{G}(\mathcal{B})$ we have actually proved the following.

\begin{Corollary} $\mathcal{G}(\mathcal{B})$ contains the class of ribbon 2-knot groups.
\end{Corollary}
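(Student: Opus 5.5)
The Corollary is a consequence of the first half of the proof of Theorem \ref{Theorem: Z by 1}, combined with Yajima's characterization \cite{YajWirtKnot}. The plan is to take an arbitrary ribbon 2-knot group $\Gamma$ and exhibit a presentation of $\Gamma$ lying in $\mathcal{B}$.

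By \cite{YajWirtKnot}, $\Gamma$ has a Wirtinger presentation of deficiency one,
$$\mathcal{P}=\langle x_0, x_1, \dots, x_{n-1}: w_j^{-1}x_jw_j x_0^{-1}, \ 0<j<n \rangle ,$$
with $w_j \in F(\mathbf{x})$. One point needs care: a Wirtinger relation in general reads $x_{a}=w x_{b} w^{-1}$ with arbitrary indices, not necessarily $x_0$ on one side.

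\begin{itemize}
\item[(i)] First I normalize the relators to the form $w_j^{-1}x_jw_j=x_0$. Consider the graph whose vertices are the generators and whose edges are the $n-1$ relations.
\item[(ii)] Since $\Gamma^{ab}\cong\mathbb{Z}$ and all generators are conjugate after abelianizing, this graph must be connected. With $n$ vertices and $n-1$ edges it is therefore a tree.
\item[(iii)] Rooting the tree at $x_0$ and composing conjugations along tree paths, each $x_j$ is conjugate to $x_0$ by an explicit word. This lets me replace the relators, via Tietze moves that preserve the generating set and the number of relators, by relators $x_0^{-1}w_j^{-1}x_jw_j$ for suitable new $w_j$.
\end{itemize}

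Next I apply verbatim the construction from the proof of Theorem \ref{Theorem: Z by 1}. Take $\mathcal{G}=\langle u_0,\dots,u_{n-1}: u_ju_0^{-1}, \ 0<j<n\rangle$, which is a deficiency one presentation of $\mathbb{Z}$, so $\mathcal{G}\in\mathcal{D}$. Define $\epsilon$ by $u_0\mapsto x_0$ and $u_j\mapsto w_j^{-1}x_jw_j$. Then $\epsilon(\mathbf{v})$ is exactly the relator set of $\mathcal{P}$.

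The substitution presentation $\mathcal{H}=\langle \mathbf{x}: x_0, w_j^{-1}x_jw_j\rangle$ has $n$ generators and $n$ relators, so it is balanced. It presents the trivial group, since killing $x_0$ kills each conjugate of $x_j$ and hence each $x_j$. Thus $\mathcal{P}\in\mathcal{B}$ and $\Gamma\in\mathcal{G}(\mathcal{B})$.

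The only step with genuine content is the normalization in (i)–(iii). If one prefers to avoid it, the substitution can be adapted directly to the tree: set $u_a \mapsto x_a$ and $u_b\mapsto w^{-1}x_b w$ along each edge oriented away from the root, and take $\mathcal{G}$ to have relators $u_a=u_b$ along the tree edges. This is still a deficiency one presentation of $\mathbb{Z}$ because the tree is connected. The trivial-group check for $\mathcal{H}$ then proceeds by induction outward from the root, where $x_0$ is killed directly.
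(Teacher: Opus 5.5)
Your proof is correct and follows the paper's own argument. The paper obtains the corollary by combining Yajima's characterization with the Wirtinger construction in the proof of Theorem \ref{Theorem: Z by 1}, but it builds the normal form $w_j^{-1}x_jw_j=x_0$ into its definition of a Wirtinger presentation; your tree normalization in (i)--(iii) supplies that step, which the paper leaves implicit, and it is exactly where $\Gamma^{ab}\cong\mathbb{Z}$ is used.

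Drop the closing ``alternative'', though. At an interior vertex $x_c$ of the tree it prescribes both $u_c\mapsto x_c$ (as a parent) and $u_c\mapsto w^{-1}x_cw$ (as a child), so $\epsilon$ is not well defined. Repairing it requires the composite conjugators from (iii), which is just your main argument again.
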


It could not be determined if this containment is proper. One method for doing so is to take a deficiency one presentation $\mathcal{P} \in \mathcal{B}$ and adjoin the relator $\epsilon(\mu)$, for $\mu \in F(\textbf{u})$ a generator of $G\cong \mathbb{Z}$, to obtain a balanced presentation $\mathcal{Q}$ for the trivial group. Then $\hat{G}$ presented by $\mathcal{P}$ is a ribbon 2-knot group if and only if $\mathcal{Q}$ is Andrews- Curtis equivalent to the presentation $\langle \textbf{x}: \textbf{x} \rangle$ \cite{YoshNote}. In this way, we can show the group $\bar{T}$ from Example \ref{Example: T bar not K1} is a ribbon 2-knot group. For if we adjoin the relator $u$ to its presentation, then the resulting presentation is AC-equivalent to $\langle x_0, x_1: u, \theta(u) \rangle$ which in turn is known to be AC-equivalent to $\langle x_0, x_1: x_0, x_1 \rangle$. 

On the other hand, consider the group $\bar{T}_3$ presented by $\langle x_0, x_1, x_2: u=\theta(u), \theta(u)=\theta^2(u) \rangle$ in $\mathcal{B}$ still with $u=x_0x_1x_0^{-1}x_1^{-2}$. Adjoining the relation $u=1$ to this presentation is AC-equivalent to $\langle x_0, x_1, x_2: u, \theta(u), \theta^2(u) \rangle$, where it is an open problem to determine if this presentation of the trivial group is AC-equivalent to $\langle x_0, x_1, x_2: x_0, x_1, x_2  \rangle$ (this is referred to as Neumann's potential counterexample in the literature  \cite{BurBal}). Thus if Neumann's example is a counterexample to the AC-conjecture, then it would also provide that the class of groups $\mathcal{G}(\mathcal{B})$ properly contains the ribbon 2-knot groups. For more on the groups $\bar{T}$ and $\bar{T}_3$, see Section \ref{Section: QKL} where we introduce and study the groups $Q(k,l)$. We show $Q(2,1)\cong T$ is the group of the trefoil knot and $Q(3,1) \cong \bar{T}_3 / \langle \langle u^2 \rangle \rangle$, i.e. $Q(3,1)$ is obtained from $\bar{T}_3$ by adjoining $u^2=1$ rather than $u=1$.

\subsection{Iterated substitutions $\Phi^n$} \label{Subsection: iterates}

Substitutions have the following semigroup structure. Given $\epsilon: F(\mathbf{u}) \rightarrow F(\mathbf{x})$ and $\delta: F(\mathbf{x}) \rightarrow F(\mathbf{t})$ for $\mathbf{t}=\{ t_0, t_1, \dots, k_{l-1} \}$ and $l>0$, the composition $\psi= \delta \circ \epsilon: F(\mathbf{u}) \rightarrow F(\mathbf{t})$ is a substitution homomorphism and the induced maps
$$G \overset{\epsilon}{\rightarrow} \hat{G} \overset{\delta}{\rightarrow} \hat{\hat{G}}$$
are such that $\psi = \delta \circ \epsilon$, too. When there are $k=n$ substitutions, there is the identity substitution $\iota: F(\mathbf{u}) \rightarrow F(\mathbf{x})$ given by $u_i \mapsto x_i$ for $0 \leq i < n$, and the substitution $\epsilon$ determines the endomorphism $\Phi_{\epsilon}= \epsilon \circ Id$ of $F(\mathbf{x})$, for $Id: F(\mathbf{x}) \rightarrow F(\mathbf{u})$ defined by $x_i \mapsto u_i$ for $0 \leq i< n$. Thus the set of substitution homomorphisms having $k=n$ substitutions are identically the set of endomorphisms of $F(\mathbf{x})$, and therefore is a monoid with identity $\Phi_\iota$. 

Let $\Phi_\epsilon$ be an endomorphism  of $F(\mathbf{x})$. In this situation, we will omit reference to $\mathbf{u}$ and write $\mathbf{v}$ in $F(\mathbf{x})$ so that $G \cong \langle \mathbf{x}: \mathbf{v} \rangle$. Here we may compose $\Phi$ with itself and form the sequence of groups
$$G=\hat{G}_0 \rightarrow \hat{G}_1 \rightarrow \hat{G}_2 \rightarrow \dots$$
where $\hat{G}_a \cong \langle \mathbf{x}: \Phi^a(\mathbf{v}) \rangle$ and there is an induced map $\Phi: \hat{G}_a \rightarrow \hat{G}_{a+1}$ for all $a>0$. Repeated applications of Theorem \ref{Theorem: intro short exact} and Proposition \ref{Prop: free subgroups} for $\Phi^a(\mathbf{v})$ relative to the substitution $\Phi^i(\mathbf{x})$ for $0 \leq i<a$ may be used to provide the following structure descriptions of $\hat{G}_a$ and the corresponding subgroup structure in $F(\mathbf{x})$. 

\begin{Theorem} \label{Theorem: endo short exact} For $n>0$, let $\Phi$ be an endomorphism of $F=F(\mathbf{x})$, and $\mathbf{v} \subseteq F$ with $R_i$ the normal closure of $\Phi^i(\mathbf{v})$ in $F$ for $0 \leq i<n$. Then the normal subgroups $N_i$ generated by $\Phi^i(\mathbf{x})$ for $i \geq 0$ form a descending chain of subgroups in $F$, where $R_i \unlhd N_i$ for each $i\geq 0$. Thus for each group $\hat{G}_a \cong \langle \mathbf{x}: \Phi^a(\mathbf{v}) \rangle \cong F \big/ R_a$ with $a > 0$, $\Phi$ induces natural maps $\Phi^{a-i}: \hat{G}_i \rightarrow \hat{G}_{a}$ for $0\leq i < a$, where for $H_i=\langle \mathbf{x}: \Phi^{a-i}(\mathbf{x}) \rangle \cong F \big/ N_{a-i}$, 
$$1 \rightarrow \langle \langle \Phi^{a-i}(\hat{G}_{i}) \rangle \rangle_{\hat{G}_a} \xrightarrow{\iota} \hat{G}_a \xrightarrow{\pi} H_i \rightarrow 1$$
is an exact sequence of homomorphisms. In particular, $\Phi^{a-i}(\hat{G}_i) \cong \Phi^{a-i}(F) R_a \big/ R_a \cong \Phi^{a-i}(F) \big/ \Phi^{a-i}(R_{i}) $ and $\langle \langle \Phi^{a-i}(G_{i}) \rangle \rangle_{\hat{G}_a} \cong N_{a-i} / R_a$ for $0 \leq i< a$.
\end{Theorem}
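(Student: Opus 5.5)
The plan is to treat the pair $(\hat{G}_i,\Phi^{a-i})$ as an instance of the general composite situation and to read everything off from Theorem \ref{Theorem: intro short exact} and Proposition \ref{Prop: free subgroups}.

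\emph{Step 1: the descending chain.} Since $\Phi^{i+1}(x_j)=\Phi^i(\Phi(x_j))$ and $\Phi(x_j)$ is a word in $\mathbf{x}$, each $\Phi^{i+1}(x_j)$ is a product of elements $\Phi^i(x_l)^{\pm1}$. Hence $\Phi^{i+1}(\mathbf{x})$ lies in the subgroup generated by $\Phi^i(\mathbf{x})$, and therefore $N_{i+1}\le N_i$. Likewise each $v\in\mathbf{v}$ is a word in $\mathbf{x}$, so $\Phi^i(v)$ is a word in $\Phi^i(\mathbf{x})$. It follows that $R_i\le N_i$, and $R_i$ is normal in $F$, hence in $N_i$.

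\emph{Step 2: recognise the composite structure.} Fix $0\le i<a$. Put $\mathcal{G}=\langle \mathbf{u}:\Phi^i(\mathbf{v})\rangle$, with $\Phi^i(\mathbf{v})$ rewritten in $\mathbf{u}$ via $x_j\mapsto u_j$, so that its group is $\hat{G}_i$. Take $\epsilon=\Phi^{a-i}\circ\mathrm{Id}$. Then $\epsilon(\Phi^i(\mathbf{v}))=\Phi^a(\mathbf{v})$, so the composite presentation is exactly $\langle\mathbf{x}:\Phi^a(\mathbf{v})\rangle$ for $\hat{G}_a$. Its substitution group is $\langle\mathbf{x}:\Phi^{a-i}(\mathbf{x})\rangle=F/N_{a-i}=H_i$. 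The induced map $\Phi^{a-i}:\hat{G}_i\to\hat{G}_a$ is well defined because relators map to relators. This map also agrees with the $(a-i)$-fold composite of the induced maps $\Phi:\hat{G}_j\to\hat{G}_{j+1}$, by the semigroup remark preceding the theorem.

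\emph{Step 3: apply the earlier results.} Theorem \ref{Theorem: intro short exact} now gives the displayed exact sequence verbatim. Proposition \ref{Prop: free subgroups}, applied with $R_{\mathbf{v}}$ equal to the normal closure of $\Phi^i(\mathbf{v})$ in $F(\mathbf{u})$, gives two isomorphisms. First, $\Phi^{a-i}(\hat{G}_i)\cong \Phi^{a-i}(F)R_a/R_a\cong\Phi^{a-i}(F)/\Phi^{a-i}(R_i)$; this uses the identification of $R_{\mathbf{v}}$ with $R_i$ under $\mathrm{Id}$, and the fact that $\epsilon(R_{\mathbf{v}})=\Phi^{a-i}(R_i)$. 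Second, the normal closure of the image is $\langle\langle\Phi^{a-i}(\mathbf{x})\rangle\rangle_F/\langle\langle\Phi^a(\mathbf{v})\rangle\rangle_F=N_{a-i}/R_a$.

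\emph{Main obstacle.} The only delicate point is bookkeeping, not mathematics. One must make sure the isomorphism $\epsilon(G)\cong\epsilon(F(\mathbf{u}))/\epsilon(R_{\mathbf{v}})$ from Proposition \ref{Prop: free subgroups} is used in the form $\epsilon(F(\mathbf{u}))R/R$, via the second isomorphism theorem. This requires the intersection $\Phi^{a-i}(F)\cap R_a$ to equal $\Phi^{a-i}(R_i)$, and that identification is exactly what the proposition's proof asserts, so I would cite it rather than reprove it.

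Two notational issues also need fixing. First, the index range ``$0\le i<n$'' for the $R_i$ in the statement should be read as $i\ge0$; here $n$ is the rank of $F$, not a bound on the iterates. Second, the occurrence of $G_i$ in the last formula should be read as $\hat{G}_i$.
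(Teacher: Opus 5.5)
Your reduction to Theorem~\ref{Theorem: intro short exact} and Proposition~\ref{Prop: free subgroups} is the route the paper takes. Its proof also derives the exact sequence from $N_{a-i}/R_a \to F/R_a \to F/N_{a-i}$ and then redraws the lattice of that proposition. Steps~1 and~2 are correct, as are the exact sequence, $\Phi^{a-i}(\hat G_i)\cong\Phi^{a-i}(F)R_a/R_a$, and $\langle\langle\Phi^{a-i}(\hat G_i)\rangle\rangle_{\hat G_a}\cong N_{a-i}/R_a$. The gap is the point you isolated and then chose to cite: the equality $\Phi^{a-i}(F)\cap R_a=\Phi^{a-i}(R_i)$ is false in general.

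Because $\Phi^{a-i}$ maps $F$ onto $\Phi^{a-i}(F)$, the group $\Phi^{a-i}(R_i)$ is the normal closure of $\Phi^a(\mathbf v)$ inside the subgroup $\Phi^{a-i}(F)$. By contrast, $\Phi^{a-i}(F)\cap R_a$ also contains elements of $\Phi^{a-i}(F)$ that arise only by conjugating $\Phi^a(\mathbf v)^{\pm1}$ by elements of $F$ outside $\Phi^{a-i}(F)$. Only the inclusion $\Phi^{a-i}(R_i)\le\Phi^{a-i}(F)\cap R_a$ holds.

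Here is a counterexample. Take $F=F(x_0,x_1)$, $\Phi(x_0)=x_0$, $\Phi(x_1)=x_1x_0x_1^{-1}$, $\mathbf v=\{x_0\}$, $a=1$, $i=0$.
\begin{itemize}
\item We have $R_0=R_1=\langle\langle x_0\rangle\rangle_F$ and $\Phi(F)\le R_1$.
\item Hence $\Phi(\hat G_0)=\Phi(F)R_1/R_1=1$: the generator $x_1$ of $\hat G_0\cong\mathbb Z$ maps to $x_1x_0x_1^{-1}=1$.
\item However, $x_0$ and $x_1x_0x_1^{-1}$ freely generate $\Phi(F)$, so $\Phi\colon F\to\Phi(F)$ is an isomorphism.
\item Therefore $\Phi(F)/\Phi(R_0)\cong F/R_0\cong\mathbb Z$.
\end{itemize}
So the clause $\Phi^{a-i}(\hat G_i)\cong\Phi^{a-i}(F)/\Phi^{a-i}(R_i)$ is false as stated, even as an abstract isomorphism, and no argument can establish it. The paper's proof asserts the same intersection ($\bar N_{a-i}\cap R_a\cong\langle\langle\Phi^a(\mathbf v)\rangle\rangle_{\bar N_{a-i}}$), as does the proof of Proposition~\ref{Prop: free subgroups}. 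Citing them would import an error, not a lemma.

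What you can prove is that $\Phi^{a-i}(\hat G_i)\cong\Phi^{a-i}(F)/(\Phi^{a-i}(F)\cap R_a)$, which is a quotient of $\Phi^{a-i}(F)/\Phi^{a-i}(R_i)$. The natural surjection between them is an isomorphism exactly when the kernel of $\hat G_i\to\hat G_a$ equals $\ker(\Phi^{a-i})R_i/R_i$. In particular, for injective $\Phi$ this happens exactly when $\hat G_i\to\hat G_a$ is injective.
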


\begin{proof} Let $\Phi \in \mathrm{End} (F)$. Then any word in $\Phi^{a}(\mathbf{x})=\Phi^{i}(\Phi^{a-i}(\mathbf{x}))$ is composite in $\Phi^{a-i}(\mathbf{x})$ so $\langle \Phi^a(\mathbf{x}) \rangle_F \leq \langle \Phi^{a-i}(\mathbf{x}) \rangle_F$ for $0 \leq i<a$. Therefore $N_a \leq N_{a-i}$, and in particular $N_{i+1} \leq N_{i}$ for $i \geq 0$, so $\Phi$ determines a descending chain of normal subgroups in $F$. Since $\mathbf{v} \subset F(\mathbf{x})$, we also have $\langle \Phi^i(\mathbf{v}) \rangle_F \leq \langle \Phi^i(\mathbf{x}) \rangle_F$ so that $R_i \leq N_i$ for $i \geq 0$. Thus $R_a \leq N_a \leq N_{a-i}$ where $R_a \unlhd N_{a-i}$, so there is the exact sequence 
$$1 \rightarrow R_a \big/ N_{a-i} \rightarrow F/ R_a \rightarrow F \big/ N_{a-i}\rightarrow 1$$
of homomorphisms, where $\hat{G}_a \cong F \big/ R_a$ and $H_i \cong F \big/ N_{a-i}$ by definition.

Let $\bar{N}_i=\langle \Phi^i(\mathbf{x}) \rangle_F$ for $i \geq 0$, i.e. $\bar{N}_i \cong \Phi^i(F)$. There is the subgroup lattice
\begin{align*}
\begin{xymatrix}{
&& \bar{N}_{a-i}R_a \ar@{-}[dl]\ar@{-}[dr]\\
&\bar{N}_{a-i} \ar@{-}[dr] & & R_a \ar@{-}[dl] \\
&&\bar{N}_{a-i} \cap R_a}
\end{xymatrix}
\end{align*}
where $\bar{N}_{a-i} \cap R_a \cong \langle \langle \Phi^a(\mathbf{v}) \rangle \rangle_{\bar{N}_{a-i}} \cong \Phi^{a-i}(R_{i})$ and the remainder of the proof follows as in Proposition \ref{Prop: free subgroups}. 

\qed
\end{proof}

In the introduction, we discussed two references to the presentations obtained by successively composing Higman's presentation of the trivial group with itself. In another direction, we recall a result of Higman stated in terms of substitutions by B. H. Neumann in \cite{NeumEssay}. Let $N$ denote the subgroup of $F(x_0, x_1, x_2)$ normally generated by $r=x_0^{-1}x_1x_0x_1^{-2}$ and $s=x_1^{-1}x_2^{-1}x_1x_2$ (these are the relators for Higman's  first example of a finitely presented non-Hopfian group \cite{HigHop}). Let $\Phi$ denote the free group automorphism of $F(x_0, x_1, x_2)$ which fixes $x_0$ and $x_2$ while mapping $x_1 \mapsto x_0x_1x_0^{-1}$. Finally, denote by $N_a$ the normal closure of $\{ \Phi^a(r), \Phi^a(s) \}$ in $F(\textbf{x})$. Then $N \subset N_1 \subset N_2 \subset \dots$ is a properly ascending infinite chain of normal subgroups in $F(x_0, x_1, x_2)$.

To conclude this section we demonstrate substitutions where $\hat{G}_a \cong \langle \textbf{x}: \Phi^a(\textbf{x}) \rangle$ is finite for all $a>0$. In terms of subgroups of $F(\textbf{x})$, finiteness of $\hat{G}_a$ for $a>0$ implies the existence of a proper infinite descending chain of normal subgroups in $F(\textbf{x})$, each having finite index in $F(\textbf{x})$ and normally generated by $\{ \Phi^a(x_i): x_i \in \textbf{x} \}$. Notice that such $\Phi$ fail to be surjective in a very weak sense. Of course, there are simple examples with this property, e.g. $\Phi \in \mathrm{End}(F(x_0, x_1))$ by $\Phi(x_0)=x_0$ and $\Phi(x_1)=x_1^2$. However, both of the examples given below have the property that $\Phi(x_i)$ has word length greater than one for all $x_i \in \textbf{x}$. In fact, the word length of $\Phi^a(x_i)$ has exponential growth. 

\begin{Lemma} \label{Lemma: finite iterate} For $\textbf{x}=\{x_0, x_1\}$, let $\Phi$ be the endomorphism of $F(\textbf{x})$ defined by $x_i \mapsto x_i^2x_{i+1}^{-1}$ for $i=0, 1$. Then for $a>0$ the group 
$\hat{G}_a \cong \langle \textbf{x}: \Phi^a(\textbf{x}) \rangle$ is cyclic of order $3^a$ and its normal subgroups $N_k= \langle \Phi^{a-k}(\textbf{x}) \rangle \cong \hat{G}_{k}$ for $0 \leq k \leq a$ form a composition series where $N_k / N_{k-1} \cong \mathbb{Z}_{3}$ for $k>0$.
\end{Lemma}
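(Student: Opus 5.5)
The plan is to first find a single relation that forces $\hat{G}_a$ to be cyclic, and then read off the order from exponent sums. The key observation is that the product $x_0x_1$ behaves almost invariantly under $\Phi$. Directly,
$$\Phi(x_0x_1)=x_0^2x_1^{-1}\,x_1^2x_0^{-1}=x_0^2x_1x_0^{-1}=x_0\,(x_0x_1)\,x_0^{-1},$$
so $\Phi(x_0x_1)$ is a conjugate of $x_0x_1$ in $F(\textbf{x})$. Since $\Phi$ is an endomorphism, it maps conjugates to conjugates, and an induction on $a$ gives that $\Phi^a(x_0x_1)$ is conjugate in $F(\textbf{x})$ to $x_0x_1$ for every $a\geq 0$. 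On the other hand, $\Phi^a(x_0x_1)=\Phi^a(x_0)\Phi^a(x_1)$ is a product of defining relators of $\langle \textbf{x}: \Phi^a(\textbf{x})\rangle$. It is therefore trivial in $\hat{G}_a$, and hence $x_0x_1=1$ in $\hat{G}_a$. Consequently $x_1=x_0^{-1}$, and $\hat{G}_a$ is cyclic, generated by $x_0$.

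With $\hat{G}_a$ cyclic, each relator can be replaced by its exponent sum in $x_0$ after substituting $x_1=x_0^{-1}$. Abelianizing, $\Phi$ acts on $\mathbb{Z}^2$ by the matrix with columns $(2,-1)$ and $(-1,2)$. This matrix has eigenvector $(1,1)$ with eigenvalue $1$ and eigenvector $(1,-1)$ with eigenvalue $3$, so
$$\Phi^a(x_0)\equiv \tfrac{1+3^a}{2}\,x_0+\tfrac{1-3^a}{2}\,x_1 .$$
Under $x_1=-x_0$ this becomes $3^a x_0$, and symmetrically $\Phi^a(x_1)$ becomes $-3^a x_0$. Hence $\hat{G}_a\cong\langle x_0: x_0^{3^a}\rangle\cong\mathbb{Z}_{3^a}$. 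The only delicate point is justifying this reduction to exponent sums. It is legitimate because adjoining $x_0x_1=1$ does not change the group, so $\hat{G}_a\cong\langle \textbf{x}: \Phi^a(\textbf{x}), x_0x_1\rangle$, and the latter is visibly cyclic with a single relation $x_0^{3^a}$ after eliminating $x_1$.

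For the composition series I would apply Theorem \ref{Theorem: endo short exact} with $\mathbf{v}=\textbf{x}$. The subgroup of $\hat{G}_a$ generated (equivalently, normally generated, since $\hat{G}_a$ is abelian) by $\Phi^{a-k}(\textbf{x})$ is the image of $\hat{G}_k$ under $\Phi^{a-k}$. By the eigenvector computation above, this image is generated by $\Phi^{a-k}(x_0)\mapsto 3^{a-k}x_0$ in $\mathbb{Z}_{3^a}$, so it is the unique subgroup of order $3^k$. Thus $N_k\cong\mathbb{Z}_{3^k}\cong\hat{G}_k$, and in particular the map $\Phi^{a-k}:\hat{G}_k\to\hat{G}_a$ is injective. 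The chain $1=N_0<N_1<\dots<N_a=\hat{G}_a$ then has successive quotients of order $3$, i.e. $N_k/N_{k-1}\cong\mathbb{Z}_3$.

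I expect the main obstacle to be the first step, namely showing that $\hat{G}_a$ is abelian. A naive induction via the exact sequence of Theorem \ref{Theorem: intro short exact}, with $H\cong\mathbb{Z}_3$, would require proving that the image of $\hat{G}_{a-1}$ is central, which looks messy. The conjugation identity for $\Phi(x_0x_1)$ sidesteps this entirely, so I would verify that identity first and build the rest of the argument on it.
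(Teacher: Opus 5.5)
Your proof is correct and is essentially the paper's argument. The paper inducts on $a$: it pushes the relation $x_0=x_1^{-1}$ from $\hat{G}_{a-1}$ into $\hat{G}_a$ via $\Phi$, which is exactly your identity $\Phi(x_0x_1)=x_0(x_0x_1)x_0^{-1}$ (you unroll it into a single conjugacy), and then uses $\Phi(x_0)\equiv x_0^3$ modulo $x_0=x_1^{-1}$, which is your eigenvalue-$3$ computation. Your explicit check that $\Phi^{a-k}(x_0)$ generates the subgroup of order $3^k$ also supplies the composition-series verification, which the paper leaves implicit.
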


\begin{proof} The cases $a=1, 2$ are Example 3.1 where $\hat{G}_1 \cong \mathbb{Z}_3$, and $\hat{G}_2 \cong \mathbb{Z}_9$ satisfies $x_0= x_1^{-1}$ and $\hat{G}_2 / \langle  \Phi(\textbf{x}) \rangle \cong \mathbb{Z}_3$. Proceeding inductively, for $a>2$ assume $\hat{G}_{a-1}\cong \mathbb{Z}_{3^{a-1}}$ and satisfies $x_0=x_1^{-1}$. Then $\Phi(x_0)= \Phi(x_1^{-1})$ in $\hat{G}_a$ which implies $x_0^2x_1^{-1}=(x_1^2x_0^{-1})^{-1}$ so that $x_0= x_1^{-1}$ in $\hat{G}_a$, too. Working modulo the relation $x_0=x_1^{-1}$, $\Phi(x_0)= x_0^2x_1^{-1}= x_0^3$ so that the relator $\Phi^a(x_0)= \Phi^{a-1}(x_0^3)=\dots= x_0^{3^a}$. Similarly, the second relator becomes $\Phi^a(x_1) = x_1^{3^a}$. Thus $\Phi^a(x_1)$ is a consequence of $x_0=x_1^{-1}$ and $\Phi^a(x_0)$, from which it follows that $\hat{G}_a \cong \langle x_0: x_0^{3^a} \rangle$.
\qed
\end{proof}

A similar example arises in the free group $F(x_0, x_1, x_2)$ with the substitution $\Phi(x_i)=x_ix_{i+1}$ for $i=0,1,2$. This time $\hat{G}_a$ is cyclic of order $2^a$ for $a>0$, which may also be proved inductively, e.g with the hypothesis $x_0=x_1=x_2$ and $|x_0|=2^{a-1}$ in $G_{a-1}$. We provide an argument that $x_0=x_1$ in case $a=2$, from which the interested reader may work out the general argument. Using that $\hat{G}_1 \cong \langle x_0 \rangle$ is cyclic order 2 where $x_0=x_1=x_2$, in $\hat{G}_2$ we have
\begin{align*}
x_0 &= x_0 \Phi(x_0^{2}) \\
&= x_0 \Phi(x_1x_0) \\
&=x_0x_1x_2x_0x_1 \\
&=\Phi(x_0x_2)x_1 \\
&=\Phi(x_0^2)x_1 \\
&=x_1
\end{align*}
For the general argument, note that if $\Phi(x_0^k)$ ends in $x_0$ for some $k$, then we may instead insert $\Phi (x_0^{2k})$. For since $2k$ is even and $|\textbf{x}|=3$ is odd, the word will necessarily end in $x_1$ or $x_2$ instead. In fact, the same argument applies anytime $F(\textbf{x})$ has odd rank (the argument fails when the rank is even since $\hat{G}_1 \cong \mathbb{Z}$ in that case). In summary we have the following. 

\begin{Lemma} \label{Lemma: another finite iterate} For $\textbf{x}=\{x_0, x_1, \dots, x_{n-1} \}$ with $n$ odd, let $\Phi$ be the endomorphism of $F(\textbf{x})$ defined by $x_i \mapsto x_ix_{i+1}$ for $0\leq i< n$. Then for $a>0$ the group $\hat{G}_a \cong \langle \textbf{x}: \Phi^a(\textbf{x}) \rangle$ is cyclic of order $2^a$ and its normal subgroups $N_k= \langle \Phi^{a-k}(\textbf{x}) \rangle \cong \hat{G}_{k}$ for $0 \leq k \leq a$ form a composition series where $N_k / N_{k-1} \cong \mathbb{Z}_{2}$ for $k>0$.
\end{Lemma}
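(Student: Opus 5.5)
Induction on $a$ is the natural route, with the inductive hypothesis that in $\hat{G}_{a}$ all generators coincide, $x_0=x_1=\dots=x_{n-1}$, and $x_0$ has order exactly $2^{a}$. The base case $a=1$ is immediate: the relators $x_ix_{i+1}$ give $x_{i+1}=x_i^{-1}$ for all $i$ (indices mod $n$). Going once around the odd cycle gives $x_0=x_0^{(-1)^n}=x_0^{-1}$. Hence $x_0^2=1$ and $x_i=x_i^{-1}=x_{i+1}$ for all $i$, so $\hat{G}_1\cong\mathbb{Z}_2$. Nontriviality comes from the abelianization, which is the cyclic group $\mathbb{Z}^n/\langle e_i+e_{i+1}\rangle$. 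Its determinant is $1+1=2$ for odd $n$, since it is a circulant with eigenvalues $1+\zeta$.

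For the inductive step, note that $\hat{G}_a$ satisfies $\Phi(w)=1$ for every relator $w$ of $\hat{G}_{a-1}$. This is because $\Phi^a(\mathbf{x})$ normally generates the same subgroup as $\Phi$ applied to the relators $\Phi^{a-1}(\mathbf{x})$. Hence $\Phi$ induces $\hat{G}_{a-1}\to\hat{G}_a$, as in Theorem \ref{Theorem: endo short exact}. The induction hypothesis gives relators $x_ix_j^{-1}$ and $x_0^{2^{a-1}}$ in the normal closure $R_{a-1}$, so $\Phi(x_i)=\Phi(x_j)$ and $\Phi(x_0)^{2^{a-1}}=1$ hold in $\hat{G}_a$.

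The key step, and the main obstacle, is to deduce $x_0=x_1$ in $\hat{G}_a$. I would follow the $a=2$ computation in the text. Start from $x_0=x_0\Phi(x_0^{2^{a-1}})$. Replace factors using $\Phi(x_0)=\Phi(x_j)$ so that the long word $x_0\Phi(x_0)\Phi(x_{j_1})\cdots$ telescopes into the product $x_0x_1x_2\cdots$ running around the cycle. Then regroup it as $\Phi(y)\,x_1$, where $y$ is a product of $2^{a-1}$ generators, all equal to $x_0$ under the induction hypothesis. This gives $\Phi(y)=\Phi(x_0^{2^{a-1}})=1$, hence $x_0=x_1$.

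The parity issue is handled by the remark preceding the lemma. A run of consecutive generators of length $2m+1$ starting at $x_0$ ends at index $2m \bmod n$. Choosing the number of doubled blocks appropriately (using $2k$ in place of $k$) and using that $n$ is odd, the endpoint can be arranged to be $x_1$. I would make this concrete by writing $x_0x_1\cdots x_{L}$ as $\Phi(x_0x_2x_4\cdots)$, possibly times a trailing generator, and choosing $L$ with $L\equiv 1 \pmod n$ and $(L+1)/2\cdots$ compatible with a multiple of $2^{a-1}$. Solvability comes from $\gcd(2,n)=1$, which makes $2$ invertible mod $n$. By shift symmetry this gives $x_i=x_{i+1}$ for all $i$.

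With all generators equal, $\Phi(x_0)=x_0^2$, so $\Phi^a(x_0)=x_0^{2^a}$ and $\hat{G}_a\cong\langle x_0: x_0^{2^a}\rangle$. The order is exactly $2^a$, which can be confirmed through the abelianization. Indeed, the abelianized relation matrix is $M^a$, where $M$ is the circulant $I+P$, so $|\det M^a|=2^a$ and the abelianization is cyclic. Finally, the composition series statement follows from Theorem \ref{Theorem: endo short exact}. The image of $\Phi^{a-k}$ in the cyclic group $\hat{G}_a$ is generated by $\Phi^{a-k}(x_0)=x_0^{2^{a-k}}$, which has order $2^{k}$, so $N_k\cong\mathbb{Z}_{2^k}\cong\hat{G}_k$ and consecutive quotients are $\mathbb{Z}_2$.
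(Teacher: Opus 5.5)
Your proposal is correct and is essentially the paper's argument: induction on $a$ with the hypothesis that all generators coincide and $x_0^{2^{a-1}}=1$ in $\hat{G}_{a-1}$, followed by the telescoping identity $x_0\,\Phi(x_1x_3\cdots x_{2m-1})=\Phi(x_0x_2\cdots x_{2m-2})\,x_{2m}$, in which both $\Phi$-factors vanish in $\hat{G}_a$ once $2^{a-1}\mid m$. Your explicit choice $m=2^{a-1}t$ with $2^at\equiv 1 \pmod{n}$ makes precise the index bookkeeping that the paper only sketches for $n=3$; in fact $t=1$ already suffices, since $x_0=x_{2^a}$ together with shift symmetry and $\gcd(2^a,n)=1$ forces all generators to be equal.
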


\section{The substitution homomorphism $\epsilon$} \label{Section: eqn groups}

\subsection{Injectivity} \label{Subsection: injective}

Let us now consider the injectivity problem for the induced map $\epsilon: G \rightarrow \hat{G}$. An important feature of the construction is that $G$ often embeds in $\hat{G}$. This claim may be supported by Theorem \ref{Theorem: relative aspherical}, which provides injectivity when the relative presentation
$$\hat{\mathcal{G}} = \langle G, \mathbf{x}: \epsilon(u_i)u_i^{-1}, \ 0\leq i< k\rangle$$
for $\hat{G}$ from Lemma \ref{Lemma: relative hat G} is aspherical. Furthermore, this  relative presentation is an instance of the adjunction problem from equations over groups, where many classes of relative presentations are known to have an injective natural map. We will explore the latter in detail shortly, and for now begin with some preliminary examples and results.

Perhaps the simplest type of substitution occurs when the generators of $\mathbf{x}$ occurring in each word $\epsilon(u_i)\in F(\textbf{x})$ form a partition of $\textbf{x}$. For here the presentation $\hat{\mathcal{G}}$ shows that $\hat{G}$ may be formed as an iterated free product with amalgamation, and in particular that $G$ embeds in $\hat{G}$. 

\begin{Lemma} Let $\mathcal{G} = \langle \mathbf{u}: \mathbf{v} \rangle$, $\epsilon: F(\mathbf{u}) \rightarrow F(\mathbf{x})$, and $\mathbf{x}_i \subseteq \mathbf{x}$ denote the generators of $F(\mathbf{x})$ appearing in $\epsilon(u_i)$ for $0 \leq i < k$. If $\{\textbf{x}_0, \textbf{x}_1, \dots, \textbf{x}_{k-1} \}$ is a partition of $\textbf{x}$, then $\hat{G}$ may be obtained from $G$ as an iterated free product with amalgamation. Beginning with $G$, at each step $0 \leq i < k$ there is a free product with the one relator group
$$A_i \cong \langle \mathbf{x}_{i}: \epsilon(u_i)^{a_i} \rangle$$
amalgamated along the subgroups generated by $\langle u_i \rangle$ and $\langle \epsilon(u_i) \rangle$, for $a_i$ the order of $u_i$ in $G$ and where $a_i = 0$ if that order is infinite.
\end{Lemma}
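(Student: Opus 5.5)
Start from the relative presentation of Lemma \ref{Lemma: relative hat G}:
$$\hat{\mathcal{G}} = \langle G, \mathbf{x}: \epsilon(u_i)u_i^{-1}, \ 0\leq i< k\rangle .$$
Since $\{\mathbf{x}_0,\dots,\mathbf{x}_{k-1}\}$ partitions $\mathbf{x}$, we have $F(\mathbf{x}) = F(\mathbf{x}_0)\ast \cdots \ast F(\mathbf{x}_{k-1})$. Moreover the $i$-th adjoined relator $\epsilon(u_i)u_i^{-1}$ involves only the letters $\mathbf{x}_i$ and the element $u_i\in G$. Hence $\hat{G}$ is the quotient of $G\ast F(\mathbf{x}_0)\ast\cdots\ast F(\mathbf{x}_{k-1})$ by relators that each involve a single free factor together with $G$. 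I would build $\hat{G}$ step by step: set $P_{-1}=G$ and
$$P_i = \langle P_{i-1}, \mathbf{x}_i : \epsilon(u_i)u_i^{-1}\rangle .$$
Because the generating sets $\mathbf{x}_i$ are disjoint and each relator lives only in $P_{i-1}\ast F(\mathbf{x}_i)$, a direct Tietze argument gives $P_{k-1}\cong\hat{G}$.

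The key step is to identify each $P_i$ as an amalgam. Let $a_i$ be the order of the image of $u_i$ in $P_{i-1}$, which I need to equal its order in $G$. Set $A_i=\langle \mathbf{x}_i : \epsilon(u_i)^{a_i}\rangle$. A standard presentation computation shows that the amalgam $P_{i-1}\ast_{u_i=\epsilon(u_i)} A_i$, taken along $\langle u_i\rangle\cong\langle\epsilon(u_i)\rangle$, has presentation
$$\langle P_{i-1},\mathbf{x}_i : \epsilon(u_i)^{a_i},\ \epsilon(u_i)u_i^{-1}\rangle ,$$
and the relator $\epsilon(u_i)^{a_i}$ is redundant here because $u_i^{a_i}=1$ in $P_{i-1}$. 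So $P_i$ is this amalgam. Two facts are needed for the amalgam to be legitimate.

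\begin{itemize}
\item[(a)] The order of $\epsilon(u_i)$ in the one-relator group $A_i$ is exactly $a_i$. When $a_i=0$, $A_i$ is free and $\epsilon(u_i)$ is nontrivial, so it has infinite order. When $a_i>0$, I would write $\epsilon(u_i)=w^m$ with $w$ not a proper power and apply the Karrass--Magnus--Solitar theorem \cite{MagKarSol} on torsion in one-relator groups: the order of $w$ in $\langle \mathbf{x}_i: w^{ma_i}\rangle$ is exactly $ma_i$, so $\epsilon(u_i)=w^m$ has order exactly $a_i$. This is the one genuinely nontrivial input.
\item[(b)] The order of $u_i$ does not change along the way. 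By induction, since each factor embeds in an amalgamated free product, $G\le P_0\le\cdots\le P_{i-1}$ all embed. So the order of $u_i$ in $P_{i-1}$ equals its order in $G$, and the amalgamating map $\langle u_i\rangle\to\langle\epsilon(u_i)\rangle$ is an isomorphism.
\end{itemize}

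The main obstacle is step (a), together with the degenerate cases. If $\epsilon(u_i)$ is the empty word, then $\mathbf{x}_i=\emptyset$, which the partition hypothesis rules out. If $\epsilon(u_i)$ is not cyclically reduced, I would replace it by a conjugate; conjugating $A_i$ does not affect the argument. Once (a) and (b) hold, the induction yields the stated iterated amalgam description, and in particular that $G$ embeds in $\hat{G}$.
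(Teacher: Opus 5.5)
Your proposal is correct and follows the route the paper indicates. The paper gives no formal proof, only the remark that the relative presentation $\hat{\mathcal{G}}$ exhibits $\hat{G}$ as an iterated amalgam in which $G$ embeds. Your induction on the $P_i$ supplies the details the paper leaves implicit, in particular the Karrass--Magnus--Solitar input that $\epsilon(u_i)$ has order exactly $a_i$ in $A_i$, which is what makes each amalgamation along isomorphic cyclic subgroups legitimate.
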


\begin{Example} Consider the presentation $\mathcal{P}=\langle x_0, x_1: x_0^2=x_1^3 \rangle$ for the group $\hat{G}$ that is the free product of two copies of $\mathbb{Z}$ amalgamated over the cyclic subgroups generated by $\langle x_0^2 \rangle$ and $\langle x_1^3 \rangle$, respectively. Then $\mathcal{P}$ admits the substitution $\epsilon: F(u_0, u_1) \rightarrow F(x_0, x_1)$ given by $u_0 \mapsto x_0^2$ and $u_1 \mapsto x_1^3$ with $\mathcal{G} = \langle u_0, u_1: u_0= u_1 \rangle$. The substitution is such that $G \cong \mathbb{Z}$ with $\epsilon(G) \cong \langle x_0^2 \rangle \cong \langle x_1^3 \rangle$ and where $\hat{G} \twoheadrightarrow H \cong \mathbb{Z}_2 \ast \mathbb{Z}_3$. Compare with Lemma \ref{Lemma: Q21}, where we arrive at the same group $\hat{G} \cong T$ using a similar presentation for $G \cong \mathbb{Z}$ but with a substitution $\epsilon$ such that $H=1$.

\end{Example}

It is not hard to find examples where the substitution $\epsilon: G \rightarrow \hat{G}$ is not injective, even if we require each member of the substitution set to have reduced length greater than one. We introduce an example now using the non-injective free group endomorphism from Example \ref{Example: empty word composite}. In contrast, we recall a classical result due to Higman \cite{HigHop}, that even if $\Phi_{\epsilon} \in \mathrm{End}(F(\mathbf{x})$ is induced by a Nielsen transformation (and so is injective), then $\Phi_{\epsilon}$ may induce an epimorphism $\Phi_{\epsilon}: G \rightarrow \hat{G} \cong G$ that need not be injective (i.e. $G$ is not Hopfian).

\begin{Example} \label{Example: G not injective} Let $n>1$ and $u=x_0x_1^{-1}$. Then $u \theta_{\mathbf{x}}(u) \dots \theta_{\mathbf{x}}^{n-1}(u)=1 \in F(\textbf{x})$ so that $g=u_0 u_1 \dots u_{n-1}$ is in the kernel of $\epsilon_u: F(\mathbf{u}) \rightarrow F(\mathbf{x})$. Thus $\epsilon_u: G \rightarrow \hat{G}$ is not injective for any group $G$ such that $g \in G$ is nontrivial. A simple choice is to use the same presentation and let $G=G_n(u_0u_1^{-1})\cong \langle u_0 \rangle \cong \mathbb{Z}$ so that $g=u_0^n$ is nontrivial, and therefore $\epsilon_u: G \rightarrow \hat{G}$ is not injective. The composite word is $v \circ u= (x_0x_1^{-1})(x_1x_2^{-1})^{-1}=x_0x_1^{-1}x_2x_1^{-1}$ and the relative presentation for $\hat{G}=G_n(x_0x_1^{-1}x_2x_1^{-1})$ is given by
$$\langle G, x_i: x_ix_{i+1}^{-1}u_i^{-1}, \ 0\leq i< n \rangle .$$
In case $n=2$, we have $G_2(x_0x_1^{-1}x_2x_1^{-1}) \cong G_2((x_0x_1^{-1})^2)$, which may be obtained from $G_2(u_0^2)$ with the same substitution $\epsilon_u$. It is also the case that $g=u_0u_1$ is nontrivial in $G_2(u_0^2)$, so $\epsilon_u$ still has nontrivial kernel. Finally, note that whether the substitution $\epsilon_u$ is applied to $\mathcal{G}_2(u_0u_1^{-1}) \cong \mathbb{Z}$ or $\mathcal{G}_2(u_0^2) \cong \mathbb{Z}_2 \ast \mathbb{Z}_2$, the resulting groups are isomorphic. Thus in general $(\hat{G}, \epsilon)$ does not determine $G$.
\end{Example}

We now recall some definitions from the theory of equations over groups. A system of $k>0$ equations 
$$w_i(x_0, x_1, \dots, x_{n-1})=1$$
in the $n$ variables $\mathbf{x}$ are over a group $G$ if the given $w_i=w_i(x_0, x_1, \dots, x_{n-1})$ are elements of $F(\mathbf{x}) \ast G$ for $0 \leq i<k$, and there is a solution to the system over $G$ if $G$ embeds into a group $A$ that solves the system, i.e. there is an assignment $x_j \mapsto a_j \in A$ for $0 \leq j<n$ such that $w_i \mapsto 1$ for $0 \leq i<k$. It is well known that a system of equations over $G$ corresponds to the relative presentation $\langle G, \mathbf{x}: w_i=1, \ 0\leq i<k \rangle$ of the group $\hat{G}$, where the system has a solution if and only if the natural map from $G$ to $\hat{G}$ is injective. 

Thus for $\mathcal{P}$ a composite presentation given by $(\mathcal{G}, \epsilon)$, the relative presentation $\hat{\mathcal{G}}$ of $\hat{G}$ determines a system of $k$ equations over the group $G$ in the $n$ variables $\mathbf{x}$. Each equation $\epsilon(u_i)u_i^{-1}=1$ has $\epsilon(u_i) \in F(\mathbf{x})$ so that $\epsilon(u_i)u_i^{-1} \in F(\mathbf{x}) \times \mathbf{u}$ for $0 \leq i<k$, i.e. $u_i^{-1} \in G$ is the only nontrivial coefficient. However, if a substitution $\epsilon$ satisfies $u_i \mapsto x_j$ for some $i,j$, i.e. $\mathcal{P}$ is partially composite, then we may reduce the system in order to arrive at equation(s) of the form $\epsilon(u_i)u_i^{-1} \in F(x_0, x_1, \dots , x_{l-1}) \ast G$ for some $l>0$. For instance, Example \ref{Example: partial comp} reduces the $k=2$ equations to the equation $xAxAx^{-1}A^{-2}U^{-1}=1$ in the single variable $x$.

The situation $G\rightarrow \langle G, x: r \rangle $ where $r \in G \ast \{ x \}$ is the most studied in the literature. In our situation, this coincides with a partially composite presentation with one substitution in a single new variable. As in Lemma \ref{Lemma: E relative}, we can demonstrate this situation with the shift extension $E_n(v \circ u)$ for a composite cyclically presented group $G_n(v \circ u)$. In terms of coefficient groups, recall the shift extension $E_n(w)= G_n(w) \rtimes  \mathbb{Z}_n$ always has torsion while the cyclically presented group may not. At the same time, $G_n(u)$ injects into $G_n(v \circ u)$  if and only if $E_n(v)$ injects into $E_n(v \circ u)$ by Theorem \ref{Theorem: injective}.

\begin{Example} \label{Example: not injective E} Returning to Example \ref{Example: G not injective}, the choice $u=x_0x_1^{-1}$ corresponds to the relator $i_{a,x}(u)U^{-1}=xax^{-1}a^{-1}U^{-1}$ and the relative presentation is 
$$\langle E_n(v), x: xax^{-1}a^{-1}U^{-1} \rangle$$
where $E_n(v)=E_n(u_0u_1^{-1}) \cong \langle a, U: a^n, UaU^{-1}a^{-1} \rangle$. This equation is of the well-known form $\langle G, x: xgx^{-1}h \rangle$, which is not solvable when $g$ and $h$ have different orders in $G$; here $E_n(v) \cong \mathbb{Z}_n \times \mathbb{Z}$ where $a$ has order $n$ while $a^{-1}U^{-1}$ has infinite order. Thus while $G_n(u_0u_1^{-1}) \cong \mathbb{Z}$ is torsion free, the substitution $\epsilon: G_n(v) \rightarrow G_n(v \circ u)$ is influenced by torsion in its shift extension. Recall the element $g=u_0u_1 \dots u_{n-1} \neq 1 \in G$ satisfies $\epsilon(g)=1\in \hat{G}$. In terms of the shift extensions, the element is $i_{a,U}(g)=U aUa^{-1} \dots a^{n-1}Ua^{-(n-1)}=U^n\in E_n(v)$ and the substitution $\epsilon(U)=xax^{-1}a^{-1}$ has $i_{a,x}(\epsilon(g))=1$. Notice then that $U$ has infinite order in $E_n(v)$, but $\epsilon(U)$ now has finite order in $E_n(v \circ u)$ since $\epsilon(U^n)=\epsilon(i_{a,U}(g))=i_{a,x}(g)=1$.
\end{Example}

It is well known that equation(s) over a group $G$ will not have a solution in general. One hypothesis that has received considerable attention is when $\langle G, x: r \rangle$ is non-singular, i.e. the exponent sum of $x$ in $w \in G \ast \{x\}$ is nonzero (otherwise the equation is called singular). Similarly, a system of equations is called independent if the $k \times n$ matrix $M$ whose $(i,j)$-th entry is the sum of the exponents of $x_j$ in $w_i$ has linearly independent rows; otherwise, the system is dependent. The system in Example \ref{Example: G not injective} with a composite cyclically presented group is dependent, while the corresponding equation for its shift extension in Example \ref{Example: not injective E} is singular. Dependent systems and singular equations have received far less attention in the literature. See for example \cite{EdjHowSing} and \cite{EdjSingFour} which show large classes of singular length four equations have solutions. 

The previous examples admit many variations where injectivity fails. We give two which will have shift extensions with singular length four equations. Further composing $v \circ u$ with $x=t_0^2$ yields $v \circ u \circ t=t_0^2t_1^{-2}t_2^2t_1^{-2}$. Here there is the relative presentation for the shift extension $E_n(v \circ u \circ t)$ given by $\langle E_n(v), t: t^2at^{-2}a^{-1}U^{-1} \rangle$ which is an instance of an equation having the form $tatbt^{-1}ct^{-1}d=1$. Similarly, the substitution $x=t_0t_1^{-1}$ provides an equation of the form $tat^{-1}btct^{-1}d=1$.

We conclude this section with some applications of results from the theory equations over groups. First we use the result of Rothaus \cite{RothEqn} to recover a structural result for certain one-relator groups from the perspective that a proper power $u^a \in F(\mathbf{x})$ may be regarded as a composite word.

\begin{Example} \label{Example: one relator} Let $a>1$ be a positive integer and $u \in F(\mathbf{x})$ a word having nonzero exponent sum in some $x_i \in \mathbf{x}$. Assume this generator is $x_0$. Then the one- relator presentation $\mathcal{P}=\langle \mathbf{x}: u^a \rangle$ defining the group $\hat{G}$ is composite using $\mathcal{G}= \langle \mathbf{u}: u_0^a \rangle$ with $|\textbf{u}|=n$ and $\epsilon$ defined by $u_0 \mapsto u$ and $u_i \mapsto x_i$ for $0 < i< n$. Now $\hat{\mathcal{G}}$ defines an independent system of $n$ equations in the $n$ variables $\mathbf{x}$, so the result of Rothaus \cite{RothEqn} implies  $\epsilon: G \rightarrow \hat{G}$ is injective. This result applies because the coefficient group $G \cong \mathbb{Z}_a \ast F(u_1, \dots, u_{n-1})$ is residually finite (and so locally residually finite). Thus the subgroup of $\hat{G}$ generated by $\{x_1, x_2, \dots, x_{n-1} \}$ is isomorphic to a free group, and the element $\epsilon(u_0)= u \in \hat{G}$ has order $a>1$. Finally, the substitution $\epsilon$ determines the presentation $\mathcal{H}=\langle \textbf{x}: u, x_i, \ 0<i<n \rangle$, so the factor group $H$ of $\hat{G}$ is finite cyclic with order the absolute value of the exponent sum of $x_0$ in $u$. Thus the normal closure of $\epsilon(G)\cong \mathbb{Z}_a \ast F(n-1)$ in $\hat{G}$ has finite index in $\hat{G}$.
\end{Example}

A similar argument with substitution $u_0 \mapsto x_0x_1 \dots x_{n-1}$ and $u_i \mapsto x_i$ for $0<i<n$ as in the previous example applied to 
$$B(1,2)\ast F(n-2) \cong \langle u_0, u_1, \dots, u_{n-1}:u_1u_0u_1^{-1}u_0^{-2} \rangle$$
yields the presentation from \cite[Corollary 5]{BaumMillRF} for a group $\hat{G}$ which is not residually finite. Thus a composite group $\hat{G}$ need not be residually finite even if $\epsilon$ is injective and $G$ and $H$ are residually finite. Of course, if the given group $G$ is not residually finite and $\epsilon$ is injective, then $\hat{G}$ will not be residually finite.

Let us consider the possible groups which may arise as (a subgroup of) the kernel of some substitution $\epsilon: G \rightarrow \hat{G}$. Of course, any normal subgroup of $F(\mathbf{u})$ maps trivially to $F(\mathbf{x})$ at the level of free groups simply by selecting the substitution which maps its generators to the identity in $F(\mathbf{x})$. So, at the very least, we consider the problem when no generator of $F(\mathbf{u})$ maps trivially and at least one substitution has reduced word length greater than one in $F(\mathbf{x})$.

\begin{Proposition} \label{Prop: subgroup of kernel} Every cyclically presented group $A$ with $n>1$ generators embeds in a cyclically presented group $G_A$ in such a way that it normally generates the kernel of a substitution $\epsilon: G_A \rightarrow \hat{G}_A$. Furthermore, the group $G_A$ is SQ-universal if $n>2$.
\end{Proposition}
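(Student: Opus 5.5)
The plan is to realise $A$ as a free factor of a suitable cyclically presented group $G_A$. The substitution $\epsilon_u$ is then chosen so that its effect on $G_A$ is to collapse exactly that free factor, in the spirit of Example \ref{Example: G not injective}, where $u=x_0x_1^{-1}$ kills $u_0u_1\cdots u_{n-1}$.

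\textbf{Construction.} Write $A=G_n(w)$ with $w\in F(y_0,\dots,y_{n-1})$. Take $G_A=G_{2n}(\tilde w)$ on generators $z_0,\dots,z_{2n-1}$, with $\tilde w=w(z_0,z_2,\dots,z_{2n-2})\,z_1$, so that the odd-indexed generators carry the second factor.
\begin{itemize}
\item The shift $\theta$ carries even-indexed generators to odd-indexed ones. The relators $\theta^{2i}(\tilde w)$ therefore express each odd generator as a word in the even ones, and the relators $\theta^{2i+1}(\tilde w)$ then impose further relations among the even generators.
\item Tietze moves should reduce this to $G_A\cong A\ast F$ or $A\ast A$, possibly with a free factor depending on $n$. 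The even generators $z_{2i}\mapsto y_i$ should give an embedding $A\hookrightarrow G_A$ as a free factor.
\item Concretely, I will adjust $\tilde w$ (for instance by inserting the odd letters as $z_1z_3^{-1}$-type terms) until the Tietze computation gives $G_A\cong A\ast F(t_1,\dots,t_{n-1})$.
\end{itemize}

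\textbf{Choice of substitution.} I would take $u\in F(x_0,\dots,x_{2n-1})$ to be a product built from $x_0x_1^{-1}$, with $\epsilon_u(z_j)=\theta^j(u)$. The requirement is that the even-indexed images $\epsilon_u(z_{2i})$ telescope, as in Example \ref{Example: G not injective}, while the odd-indexed images freely generate a free subgroup of $F(\mathbf x)$. Then:
\begin{itemize}
\item By Lemma \ref{Lemma: shift sub commute}, $\epsilon_u$ induces a shift-equivariant map $G_A\to\hat G_A=G_{2n}(\tilde w\circ u)$.
\item The kernel is computed via the relative presentation of Lemma \ref{Lemma: relative hat G} and the subgroup description in Proposition \ref{Prop: free subgroups}. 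One has $\epsilon(G_A)\cong\epsilon(F(\mathbf u))/\epsilon(R_{\mathbf v})$.
\item Since $\epsilon$ restricted to the free group on the odd generators is injective and the even generators map into the telescoping kernel, the kernel of $\epsilon$ on $G_A$ should be $\langle\langle A\rangle\rangle_{G_A}$.
\end{itemize}

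\textbf{Main obstacle.} The hard step is this kernel identification: showing that nothing beyond $\langle\langle A\rangle\rangle$ dies. The approach is to check that $\hat G_A$ is a free group on the images of the free factor, by explicitly reducing the presentation $G_{2n}(\tilde w\circ u)$ with Tietze moves. That reduction shows the induced map $G_A/\langle\langle A\rangle\rangle\cong F_{n-1}\to\hat G_A$ is an isomorphism, and hence that $\langle\langle A\rangle\rangle_{G_A}$ is exactly the kernel.

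\textbf{SQ-universality.} This follows from the free-product structure. When $n>2$, $G_A$ has a free factor of rank $n-1\ge2$, so it surjects onto $F_2$, and groups surjecting onto an SQ-universal group are SQ-universal. If instead the construction only yields $A\ast A$, I would use that a free product of two nontrivial groups other than $\mathbb Z_2\ast\mathbb Z_2$ is SQ-universal. That argument would additionally need $A\neq1$, which is presumably where the hypothesis $n>2$ enters.
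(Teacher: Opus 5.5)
\textbf{There is a genuine gap.} The group $G_A$ is never actually constructed, and the version you write down fails.

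With $\tilde w=w(z_0,z_2,\dots,z_{2n-2})z_1$, the relator $\theta^{2i}(\tilde w)$ says that $\theta^i(w)$, evaluated on the even generators, equals $z_{2i+1}^{-1}$ rather than $1$. So $y_i\mapsto z_{2i}$ is not a homomorphism $A\to G_A$ unless every odd generator is trivial in $G_A$. In that case $G_A=1$, because the shift acts on $G_A$ as an automorphism interchanging the two parities. For instance, $n=2$ and $w=y_0y_1$ give $A\cong\mathbb{Z}$ by Lemma \ref{Lemma: length 2}. But $G_4(z_0z_2z_1)$ has abelianization of order $|f(1)f(-1)f(i)f(-i)|=3$ for $f(t)=1+t+t^2$, so $A$ is certainly not a free factor. ``Adjust $\tilde w$ until the Tietze computation gives $A\ast F_{n-1}$'' is a hope rather than a step, and nothing guarantees that some adjustment works for arbitrary $A$.

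The substitution step also has a structural flaw. For a cyclic substitution, $\epsilon_u(z_{2i+1})=\theta_{\mathbf{x}}(\epsilon_u(z_{2i}))$ with $\theta_{\mathbf{x}}$ an automorphism of $F(\mathbf{x})$. Hence the odd-indexed images satisfy exactly the same relations as the even-indexed ones, so they cannot telescope on one parity and be free on the other. Equivalently, by Lemma \ref{Lemma: shift sub commute} the kernel of $\epsilon_u$ is shift-invariant. It therefore cannot contain the even-indexed factor without also containing its shift.

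\textbf{The missing idea.} $A$ need not be a free factor: keep $n$ generators and embed $A$ by a substitution, as the paper does.
\begin{enumerate}
\item[(a)] Put $y=u_0u_1\cdots u_{n-1}$ and $G_A=G_n(w\circ y)$, so that $y_i\mapsto\theta^i(y)$.
\item[(b)] By Lemma \ref{Lemma: E relative}, the shift extension $E_n(w\circ y)$ is obtained from $E_n(w)=\langle a,Y: a^n, i_{a,Y}(w)\rangle$ by adjoining $U$ subject to $(Ua)^nY^{-1}$. Here $U$ occurs only with positive exponent, so Levin's theorem gives $E_n(w)\hookrightarrow E_n(w\circ y)$. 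Theorem \ref{Theorem: injective} then gives $A\hookrightarrow G_A$.
\item[(c)] Take $u=x_0x_1^{-1}$. Then $\epsilon_u(y)=1$ in $F(\mathbf{x})$, so $\hat G_A\cong F(\mathbf{x})$.
\item[(d)] The words $x_ix_{i+1}^{-1}$ for $0\le i\le n-2$ extend to a basis of $F(\mathbf{x})$. Hence the kernel of $\epsilon_u$ on $F(\mathbf{u})$ is exactly the normal closure of $y$. On $G_A$ the kernel is therefore the normal closure of $\langle\theta^i(y)\rangle$, which is the image of $A$. No Tietze analysis of $G_A$ is needed.
\item[(e)] SQ-universality comes from the epimorphism $G_A\to G_n(y)\cong F_{n-1}$ supplied by Theorem \ref{Theorem: intro short exact}. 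This is where $n>2$ enters, not through $A\neq 1$.
\end{enumerate}
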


\begin{proof} Given $A=G_n(w)$ where $w \in F(\mathbf{y})$ and $\mathbf{y}=\{y_0, y_1, \dots, y_{n-1} \}$, let $y=u_0u_1 \dots u_{n-1}$ and $G_A=G_n(w \circ y)$. Passing to the shift extension $E_n(w \circ y)$, its relative presentation $\langle E_n(w), U: i_{a, Y}(y)U^{-1} \rangle$ with $E_n(w) \cong \langle a, Y: a^n, i_{a, Y}(w) \rangle$ from Lemma \ref{Lemma: E relative} is such that each occurrence of $Y$ in the relator $i_{a, Y}(y)U^{-1}=YaYa \dots Y a^{-(n-1)} U^{-1}=(Ya)^nU^{-1}$ has positive exponent. Thus $E_n(w)$ injects into $E_n(w \circ y)$ by Levin's Theorem \cite{LevEqn}, from which it follows that $A$ injects into $G_A$ by Theorem \ref{Theorem: injective}. Furthermore, $G_A$ is SQ-universal for $n>2$ since it maps onto the substitution group $H=G_n(y)=G_n(u_0 \dots u_{n-1})$, which is free on $\{ u_0, u_1, \dots, u_{n-2} \}$. This is because each relator $\theta_{\mathbf{u}}^i(y)$ for $0<i<n$ in $\mathcal{G}_n(y)$ is a cyclic permutations of $y$, so may be deleted, and $y=1$ is equivalent to $u_{n-1}=(u_0u_1 \dots u_{n-2})^{-1}$. Hence the generator $u_{n-1}$ and its defining relation may be deleted so as to arrive at a presentation for $F(n-1)$. 

Consider now the substitution $\epsilon_u: F (\mathbf{u}) \rightarrow F(\mathbf{x})$ with $u=x_0x_1^{-1}$. Observe $\epsilon_u(y)=1$ (as in Example \ref{Example: empty word composite}) so that $w \circ y \circ u =1$ and $\hat{G}_A \cong G_n(1) \cong F(\mathbf{x})$, from which it follows that the given group $A \cong  \langle \theta^i(y), \ 0\leq i< n \rangle_{G}$ normally generates the kernel of $\epsilon_u$.
\qed
\end{proof}

See \cite{WillCycSQ} for a study of SQ-universality in other classes cyclically presented groups.

We conclude this section with an example showing a substitution may introduce torsion in $\hat{G}$ when the group $G$ is torsion free. 

\begin{Example} For $n>1$, the substitution $\epsilon \in \mathrm{End}(F(\mathbf{x}))$ defined by $x_i \mapsto x_ix_0x_{i+1}^{-1}$ for $0 \leq i<n$ satisfies $g=x_0 x_1 \dots x_{n-1} \mapsto x_0x_0^nx_0^{-1}$. Thus if $g=1$ in a (torsion free) group $G$, then $x_0^n=1$ in $\hat{G}$. Take for example the free group $G \cong F(n-1) \cong \langle \mathbf{x}: x_0x_1 \dots x_{n-1} \rangle$ where $\hat{G}\cong \langle \textbf{x}: x_0^n \rangle$. Then $G$ is torsion free but $x_0$ is nontrivial and has finite order in $\hat{G}$.
\end{Example}




\subsection{Surjectivity} \label{Subsection: surjective}


Surjectivity of the natural map $\epsilon:G \rightarrow \hat{G}$ is rare beyond those presentations already identified in the classical case, where it is an a priori assumption that $\Phi$ is an automorphism of $F$. Recall conditions (\ref{Condition 1}) and (\ref{Condition 2}) given in Section \ref{Section: background} are equivalent to an automorphism $\Phi$ of $F(\mathbf{x})$ inducing an automorphism of a group $G$. Condition (\ref{Condition 2}) requires the substitution be surjective at the level of free groups. In our situation, surjectivity of $\epsilon:F(\mathbf{u}) \rightarrow F(\mathbf{x})$ implies immediately that the induced map $G \rightarrow \hat{G}$ is surjective. However, this is not necessary. 

\begin{Proposition} \label{prop: surjective} For $n, k>0$, let $\mathcal{G}= \langle \mathbf{u}: \mathbf{v} \rangle$ and $\epsilon: F(\mathbf{u}) \rightarrow F(\mathbf{x})$ be a homomorphism. Then the induced map $\epsilon: G \rightarrow \hat{G}$ is surjective if and only if $H \cong \langle \mathbf{x}: \epsilon(\mathbf{u}) \rangle$ is the trivial group and $\epsilon(G)$ is normal in $\hat{G}$.
\end{Proposition}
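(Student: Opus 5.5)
The plan is to use the exact sequence of Theorem \ref{Theorem: intro short exact},
$$1 \rightarrow \langle \langle \epsilon(G) \rangle \rangle_{\hat{G}} \xrightarrow{i} \hat{G} \xrightarrow{\pi} H \rightarrow 1,$$
where $H \cong \langle \mathbf{x}: \epsilon(\mathbf{u}) \rangle$ and $\pi$ kills exactly the normal closure of $\epsilon(G)$. Both directions then reduce to comparing $\epsilon(G)$, its normal closure $N=\langle \langle \epsilon(G) \rangle \rangle_{\hat{G}}$, and $\hat{G}$.

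For the forward direction, assume $\epsilon: G \rightarrow \hat{G}$ is surjective, so $\epsilon(G)=\hat{G}$. Any subgroup equal to the whole group is normal, so $\epsilon(G)$ is normal in $\hat{G}$. Moreover $N=\epsilon(G)=\hat{G}$, so exactness gives $H \cong \hat{G}/N = 1$. Concretely, $\pi$ sends every $\epsilon(u_i)$ to $1$ by construction of $H$, and these images generate $\hat{G}$.

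For the converse, assume $H=1$ and $\epsilon(G) \unlhd \hat{G}$. Normality gives $N=\epsilon(G)$. The condition $H=1$ together with exactness gives $N = \ker \pi = \hat{G}$. Hence $\epsilon(G)=\hat{G}$, which is surjectivity.

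There is no real obstacle, since everything is packaged in Theorem \ref{Theorem: intro short exact}. The only point to check is that $\pi$ is the map induced by the identity on $\mathbf{x}$. Its kernel is the normal closure of the images of the $\epsilon(u_i)$, which is exactly $N$ because $\epsilon(G)$ is generated by those images. This is immediate from the relative presentation of Lemma \ref{Lemma: relative hat G}: killing $G$ there leaves $\langle \mathbf{x}: \epsilon(\mathbf{u}) \rangle$. Alternatively, one can phrase the argument via Proposition \ref{Prop: free subgroups} as an index statement: the index of $\epsilon(G)$ in $\hat{G}$ is $[\hat{G}:N]\,[N:\epsilon(G)]$, so it equals $1$ precisely when $|H|=1$ and $\epsilon(G)=N$.
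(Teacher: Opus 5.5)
Your proof is correct and follows the paper's approach: the paper gives no separate proof but treats the proposition as an immediate consequence of the exact sequence in Theorem \ref{Theorem: intro short exact}, comparing $\epsilon(G)$ with its normal closure $\ker \pi$ and with $\hat{G}$, exactly as you do.
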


Said differently, if the substitution group $H \cong \langle \mathbf{x}: \epsilon(\mathbf{v}) \rangle$ is trivial, then Theorem \ref{Theorem: intro short exact} provides that the \textit{normal} closure of $\epsilon(\mathbf{v})$ in $F(\mathbf{x})$ equals all of $F(\mathbf{x})$; however, surjectivity of the induced map concerns only the \textit{subgroup} closure of $\epsilon (\mathbf{u})$ in $\hat{G}$. It is no surprise then that surjectivity of $\epsilon:G \rightarrow \hat{G}$ is algorithmically undecidable. Indeed, given arbitrary $\mathcal{H}$, selecting a compatible presentation $\mathcal{G}$ for the trivial group $G=1$ has the property that $\epsilon$ is surjective if and only if $H=1$ by Theorem \ref{Theorem: intro short exact}, where the triviality problem for a group $H$ is undecidable in general.

It is difficult to construct specific group presentations where the induced map is surjective using nontrivial substitutions. For surjectivity requires working with a substitution $\epsilon$ that determines a presentation $\mathcal{H}$ of the trivial group, as well as a compatible relators $\textbf{v}$ such that $\epsilon(G)$ is normal in $\hat{G}$. Compare with Subsection \ref{Subsection: finiteness} where we discuss how $\epsilon$ rarely preserves the finiteness of $G$, let alone is surjective. Still, we may appeal to Nielsen transformations as in the classical case (e.g. \cite[Section 3.6]{MagKarSol} and \cite{RapFreeAuto}). Recall that such a generator assignment will not always induce a group endomorphism; however, they will always induce an epimorphism $G \rightarrow \hat{G}$. We provide an example as follows.

\begin{Example} \label{Example: surjective}  Let $\mathcal{G}=\langle a, b, c: ab, bc, ca \rangle$ where $G\cong \mathbb{Z}_2$ and $a=b=c$. Define $\Phi_{\epsilon} \in \mathrm{Aut}(F(a,b,c))$ by $a\mapsto a, b \mapsto ab, c \mapsto abc$. Then $G$ maps onto the group $\hat{G}$ given by the composite presentation $\langle a, b, c: a^2b, ababc, abca \rangle$, where $\hat{G} \cong \mathbb{Z}_2$ and $a^2=b=c=1$. Thus $\Phi_{\epsilon}$ is an isomorphism. However, $\Phi_{\epsilon}$ does not induce an automorphism of $G$ in terms of the generators $\{a, b, c \}$ since $a=b=c \in G$, but having $a \mapsto a$ and $b \mapsto ab=1$ is not well defined.
\end{Example}

The surjectivity problem is considered in the literature on equations over groups and related adjunction problems from topology. Here it is assumed a priori that the natural map is injective. We state two results which when applied to our situation provide necessary conditions for a substitution to induce an isomorphism $\epsilon: G \rightarrow \hat{G}$. First, the surjectivity problem in the form $G \rightarrow \langle G, x: W \rangle$ with $W \in G \ast \{ x\}$ is investigated in \cite{CohRourSurj}. They remark that beyond the work of \cite{RothEqn}, it appears to be open for systems of equations. Their main result shows that if $G$ is torsion free, then surjectivity implies $W$ is conjugate to $gx$ or $gx^{-1}$ for some $g\in G$. This theorem applies to those partially composite presentations using one substitution with a single  variable and where the presentation $\mathcal{G}$ is for a torsion free group $G$.

Concerning balanced systems (i.e. $|\textbf{x}|=|\textbf{r}|< \infty$), and for arbitrary groups $G$, another results states that if the natural map $G \rightarrow \langle G, \textbf{x}: \textbf{r} \rangle $ is an isomorphism, then the relative presentation $\langle G, \textbf{x}: \textbf{r} \rangle$ is aspherical \cite[Lemma 2.16]{BogWilEdjAsph}. The requirement that the system is balanced corresponds to those substitutions where $\Phi$ is an endomorphism of $F(\textbf{x})$. Thus in Example \ref{Example: partial comp}, where $\Phi_\epsilon: G \rightarrow \hat{G}$ is an isomorphism, the corresponding relative presentation is not aspherical where $G \cong \langle A, B, C: AB, BC, CA \rangle \cong \mathbb{Z}_2$, $\textbf{x}= \{a, b, c\}$, and $\textbf{r}= \{aA^{-1}, abB^{-1}, abcC^{-1} \}$ . As in \cite[Example 2.17]{BogWilEdjAsph}, this shows that a relative presentation may be aspherical even if an adjoined generator (i.e. $a$) has finite order in $\hat{G}$. 

\section{Substitutions using Higman's Groups} \label{Section: QKL}

In this section we investigate a family of groups $Q(k,l)$ obtained by composing a simple  presentation $\mathcal{G}$ of a cyclic group with a substitution $\epsilon$ whose substitution groups $H$ are modifications of Higman's groups $G_n(x_0x_1x_0^{-1}x_1^{-2})$ \cite{HigInf}. Our overall goal is to examine how the complexity of a composition $(\mathcal{G}, \epsilon )$ may influence the resulting group $\hat{G}$, and in particular to isolate the effect of passing from $\epsilon(G)$ to its normal closure in $\hat{G}$. There is no simpler algebraic scenario than taking $G$ cyclic and $H=1$ in Theorem \ref{Theorem: intro short exact}, and that is the course taken here, for recall Higman's group is trivial when $1 \leq n \leq 3$. Though not our focus, we will have results in the cases when $n>3$ and Higman's groups are infinite, too.

\subsection{Definitions and general results} \label{Subsection: Definitions and general results}

Let $k,l>0$ and define the groups $Q(k,l)$ by 
\begin{align*}
& u =x_0x_lx_0^{-1}x_l^{-2}, \ \mathcal{H} =\mathcal{G}_{kl}(x_0x_lx_0^{-1}x_l^{-2}) \\
& v =u_0u_1, \ \mathcal{G} =\mathcal{G}_{kl}(u_0u_1) \\
& \mathcal{Q}(k, l)=\mathcal{G}_{kl}(v \circ u) =\mathcal{G}_{kl}(x_0x_lx_0^{-1}x_l^{-2}x_1x_{l+1}x_1^{-1}x_{l+1}^{-2})
\end{align*}
where $\mathcal{G}$ and $\mathcal{H}$ present the groups $G$ and $H$, respectively, and the corresponding substitution homomorphism $\epsilon=\epsilon_u: F(\textbf{u}) \rightarrow F(\textbf{x})$ maps $u_i \rightarrow x_ix_{l+i}x_i^{-1}x_{l+i}^{-2}$ for $0\leq i<kl$. The components of the substitution are as follows.

\begin{Lemma} \label{Lemma: length 2} For $n>0$, the group $G_n(u_0u_1)$ is cyclic and generated by $u_0$ where
\begin{equation*}
G_n(u_0u_1) \cong 
    \begin{cases}
        \mathbb{Z} & \text{if } n \ \text{is even},\\
        \mathbb{Z}_2 & \text{if } n \ \text{is odd}.
    \end{cases}
\end{equation*}
Furthermore, the presentations are equivalent via Tietze transformations to
\begin{equation*}
\mathcal{G}_n(u_0u_1) \cong 
    \begin{cases}
         \langle x_0, x_1, \dots, x_{n-1}: u_iu_{i+1}, \ 0 \leq i \leq n-2 \rangle & \text{if } n \ \text{is even},\\
        \langle x_0, x_1, \dots, x_{n-1}: u_0^2, u_iu_{i+1}, \ 0 \leq i \leq n-2 \rangle & \text{if } n \ \text{is odd}.
    \end{cases}
\end{equation*}
\end{Lemma}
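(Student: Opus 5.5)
The plan is to work directly with the cyclic presentation $\mathcal{G}_n(u_0u_1)=\langle u_0,\dots,u_{n-1}: u_iu_{i+1},\ 0\le i<n\rangle$, subscripts modulo $n$. (The statement writes the generators as $x_0,\dots,x_{n-1}$; I read these as the $u_i$.)

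The first step is to use the relators $u_iu_{i+1}$ for $0\le i\le n-2$ to solve successively for the generators. We get $u_{i+1}=u_i^{-1}$, so inductively $u_i=u_0^{(-1)^i}$ for $0\le i\le n-1$. Each of these is a relation of the form $u_{i+1}=(\text{word in } u_0)$. Tietze transformations therefore let us delete $u_1,\dots,u_{n-1}$ together with these $n-1$ relators. This shows the group is cyclic, generated by $u_0$.

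The only relator left is the wrap-around relator $u_{n-1}u_0$. Substituting gives $u_0^{(-1)^{n-1}}u_0$.
\begin{itemize}
\item If $n$ is even, then $(-1)^{n-1}=-1$ and the relator becomes $u_0^{-1}u_0$, which is freely trivial and can be deleted. So the presentation is Tietze equivalent to $\langle u_0,\dots,u_{n-1}: u_iu_{i+1},\ 0\le i\le n-2\rangle$, and the group is $\langle u_0:\ \rangle\cong\mathbb{Z}$.
\item If $n$ is odd, the relator becomes $u_0^2$. It can be replaced by $u_0^2$ modulo the other relators (multiply by the consequences $u_{n-1}=u_0$ derived from them). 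This yields the stated presentation with $u_0^2$ adjoined and the group $\langle u_0: u_0^2\rangle\cong\mathbb{Z}_2$.
\end{itemize}

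There is no real obstacle here. The only care needed is bookkeeping for the Tietze equivalence of the stated presentations: the retained generators $u_1,\dots,u_{n-1}$ and the relators $u_iu_{i+1}$ for $i\le n-2$ are kept, and only the wrap-around relator is replaced, by the empty relator in the even case and by $u_0^2$ in the odd case. This replacement is justified because the relator $u_{n-1}u_0$ is conjugate, modulo the normal closure of the other relators, to the trivial word or to $u_0^2$ respectively.
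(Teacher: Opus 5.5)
Your proposal is correct and is essentially the paper's argument: both rest on $u_{i+1}=u_i^{-1}$, hence $u_i=u_0^{(-1)^i}$, with the wrap-around relator $u_{n-1}u_0$ becoming redundant for even $n$ and equivalent to $u_0^2$ for odd $n$. The paper identifies the group by noting that the shift acts as inversion, which reduces it to $G_1(u_0^2)$ or $G_2(u_0u_1)$. It justifies the Tietze step with the explicit alternating product $(u_0u_1)(u_1^{-1}u_2^{-1})\cdots(u_{n-2}^{-1}u_{n-1}^{-1})(u_{n-1}u_0)=u_0^2$, which is just your substitution $u_{n-1}\equiv u_0^{(-1)^{n-1}}$ written out.
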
 

\begin{proof} The relations $u_iu_{i+1}=1$ imply $\theta_{\mathbf{u}} (u_i)=u_{i+1}=u_i^{-1}$ for $0\leq i <n$ so that $\theta_{\mathbf{u}}=-\mathrm{Id}$. If $n$ is odd, this implies $\theta_{\mathbf{u}}=1\in \mathrm{Aut}(H)$ is the identity. Here $G_n(u_0u_1) \cong  G_1(u_0^2) \cong \mathbb{Z}_2$. If $n$ is even, then $\theta^2_{\mathbf{u}}=1 \in \mathrm{Aut}(H)$ and $G_n(u_0u_1)\cong G_2(u_0u_1)$. Then $u_1u_0$ is a consequence of $u_0u_1$ and $G_n(u_0u_1) \cong \langle u_0, u_1: u_0=u_1^{-1} \rangle \cong \mathbb{Z}$. Now the relator $u_{n-1}u_0$ is a consequence of the other $n-1$ relators when $n$ is even. When $n$ is odd, the relator $u_{n-1}u_0$ may be transformed to $u_0^2= (u_0u_1)(u_1^{-1}u_2^{-1}) \dots (u_{n-3}u_{n-2})(u_{n-2}^{-1}u_{n-1}^{-1})(u_{n-1}u_0)$.
\qed
\end{proof}

The substitution groups $H$ are a modification of Higman's groups $G_n(x_0x_1x_0^{-1}x_1^{-2})$ using a standard isomorphism from the theory of cyclically presented groups (e.g. \cite[Section 2]{EdjHamThoTriv}). This introduces the second parameter $l$ and makes for a richer family of groups. Specifically, this will allow for the case of Theorem \ref{Theorem: knot groups} having $k=3$ and which utilizes Higman's presentation of the trivial group with $n=3$ generators.

\begin{Lemma} \label{Lemma: free product higman} For $k,l>0$, let $u=x_0x_lx_0^{-1}x_l^{-2}$. Then 
$$H=G_{kl}(u) \cong \underset{i=1}{\overset{l}{ \ast }} G_k(x_0x_1x_0^{-1}x_1^{-2})$$
is a free product of $l$ copies of Higman's group $G_k(x_0x_1x_0^{-1}x_1^{-2})$. In particular, $H$ is trivial if $1\leq k \leq 3$ and infinite otherwise (in fact SQ-universal).
\end{Lemma}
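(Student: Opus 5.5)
The plan is to split the relators of $\mathcal{G}_{kl}(u)$ according to the residue of the subscript modulo $l$. The relators are
$$\theta^i(u)=x_ix_{l+i}x_i^{-1}x_{l+i}^{-2}, \qquad 0\leq i<kl,$$
with subscripts taken modulo $kl$. Each one involves only generators whose subscripts are congruent to $i$ modulo $l$.

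First I partition $\mathbf{x}$ into the $l$ classes
$$\mathbf{x}^{(j)}=\{x_j, x_{j+l}, \dots, x_{j+(k-1)l}\}, \qquad 0\leq j<l.$$
The relator $\theta^i(u)$ uses only $x_i$ and $x_{i+l}$, which lie in the same class $\mathbf{x}^{(j)}$ with $j\equiv i \pmod l$. Hence the relators also partition into $l$ classes, one for each $j$. Since the generator sets are disjoint and each relator lies in exactly one $F(\mathbf{x}^{(j)})$, the presentation is literally a free product presentation:
$$H\cong \underset{j=0}{\overset{l-1}{\ast}} \bigl\langle \mathbf{x}^{(j)} : \theta^{j+ml}(u),\ 0\leq m<k \bigr\rangle.$$

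Next I identify each factor with Higman's group. Relabel $y_m=x_{j+ml}$ for $0\leq m<k$. Because subscripts are taken modulo $kl$, we have $x_{j+kl}=x_j$, so $m$ runs modulo $k$. The relator $\theta^{j+ml}(u)$ then becomes $y_my_{m+1}y_m^{-1}y_{m+1}^{-2}$. These are exactly the relators of $\mathcal{G}_k(y_0y_1y_0^{-1}y_1^{-2})$. The only point needing care is this bookkeeping of the indices modulo $kl$ versus modulo $k$, including that $m\mapsto m+1$ corresponds to $x\mapsto x_{\cdot+l}$ and wraps around correctly.

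For the final claims I rely on Higman's results \cite{HigInf}: $G_k(x_0x_1x_0^{-1}x_1^{-2})$ is trivial for $1\leq k\leq 3$ and infinite for $k\geq 4$. For $k=1,2,3$ this is the standard triviality argument, which I would simply cite. In the trivial case all $l$ factors vanish, so $H=1$.

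For $k\geq 4$, Higman's group is infinite and in fact has no nontrivial finite quotients. For SQ-universality I see two routes. If $l\geq 2$, $H$ is a free product of at least two nontrivial groups that is not $\mathbb{Z}_2\ast\mathbb{Z}_2$, so it is SQ-universal by the classical theorem on free products. If $l=1$, I must cite the SQ-universality of Higman's group itself for $k\geq 4$, for instance via its decomposition as an amalgam of Baumslag–Solitar-type groups, or via the literature on cyclically presented groups \cite{WillCycSQ}. This citation, rather than any computation, is the main obstacle, so I would state it as a known result.
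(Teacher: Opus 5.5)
Your proof is correct and matches the paper's argument: the relators split by residue of the subscript mod $l$ into $l$ disjoint subpresentations, each is identified with Higman's $\mathcal{G}_k(x_0x_1x_0^{-1}x_1^{-2})$ via $x_{j+ml}\mapsto x_m$, and triviality for $1\leq k\leq 3$ is cited from Higman. The only difference is in the SQ-universality step. The paper cites Schupp for $k=4$ and an easy modification of his argument for all $k>3$, which covers every $l$ at once because $H$ maps onto any single factor. For $l\geq 2$ you instead use the free-product theorem, which needs only that the factors are infinite, so you need the Higman-group citation only when $l=1$.
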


\begin{proof} The generators and relations of $\mathcal{G}_{kl}(u)$ break up into a disjoint union of $l$ subpresentations of the form $\langle x_{il+j}: \theta^{il}(x_{j}x_{j+l}x_{j}^{-1}x_{j+l}^{-2}), \ 0 \leq i<k \rangle$ where $0\leq j< l$. For each of these subpresentations, an isomorphism with Higman's group $G_k(x_0x_1x_1^{-1}x_1^{-2})$ is given by $x_{il+j} \mapsto x_i$ for $0\leq i<k$. Triviality of Higman's groups for $1 \leq k \leq 3$ is in \cite{HigInf} while for $k=4$ Schupp proved the groups are SQ-universal \cite{SchuSQ}. An easy modification of his proof using blocking pairs shows that Higman's groups are in fact SQ-universal for all $k>3$.
\qed
\end{proof}

\begin{Lemma} \label{Lemma: Knot Groups ab sc}  For $k,l>0$ and $Q=Q(k,l)$, the abelianization of $Q$ satisfies $Q^{\mathrm{ab}} \cong G_{kl}(v)$ where

\begin{equation*}
Q^{\mathrm{ab}} \cong
    \begin{cases}
        \mathbb{Z} & \text{if } kl \ \text{is even},\\
        \mathbb{Z}_2 & \text{if } kl \ \text{is odd}.
    \end{cases}
\end{equation*}
Consequently, $\epsilon:G_{kl}(u) \rightarrow Q(k,l)$ is injective for all $k,l>0$.
\end{Lemma}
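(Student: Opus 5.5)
The plan is to compute the abelianization directly from the cyclic presentation $\mathcal{Q}(k,l)=\mathcal{G}_{kl}(v\circ u)$. Abelianizing $u=x_0x_lx_0^{-1}x_l^{-2}$ gives $-x_l$ in additive notation, since the $x_0$ terms cancel and the exponent sum of $x_l$ is $1-2=-1$. Hence the relator $\theta^i(v\circ u)=\theta^i(u)\theta^{i+1}(u)$ abelianizes to $-(x_{i+l}+x_{i+l+1})$. As $i$ ranges over $\mathbb{Z}_{kl}$, the index $i+l$ ranges over all of $\mathbb{Z}_{kl}$. So $Q^{\mathrm{ab}}$ is the abelian group on $x_0,\dots,x_{kl-1}$ with relations $x_j+x_{j+1}=0$ for all $j$, which is precisely the abelianization of $G_{kl}(u_0u_1)$, with $x_j$ playing the role of $u_j$.

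Next observe that $G_{kl}(u_0u_1)$ is already abelian by Lemma \ref{Lemma: length 2}, since it is cyclic. Therefore $G_{kl}(v)^{\mathrm{ab}}=G_{kl}(v)$, and Lemma \ref{Lemma: length 2} gives $\mathbb{Z}$ for $kl$ even and $\mathbb{Z}_2$ for $kl$ odd. This yields the displayed formula for $Q^{\mathrm{ab}}$ and the identification $Q^{\mathrm{ab}}\cong G_{kl}(v)$.

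For injectivity, I read the statement as concerning the map $\epsilon:G_{kl}(v)\to Q(k,l)$. The domain is the group $G$ generated by $u_0$, and $G_{kl}(u)=H$ is the wrong object there. I would compose $\epsilon$ with the abelianization $Q\to Q^{\mathrm{ab}}$ and identify $Q^{\mathrm{ab}}$ with $\langle x_0\rangle$, where $x_j=(-1)^jx_0$. Under this map the generator $u_0$ goes to the image of $u$, namely $-x_l=(-1)^{l+1}x_0$, which is a generator of the cyclic group $Q^{\mathrm{ab}}$. So the composite $G\to Q^{\mathrm{ab}}$ sends a generator to a generator. Since $G\cong Q^{\mathrm{ab}}$ abstractly, this composite is a surjection between isomorphic cyclic groups, and therefore an isomorphism; this step uses that $\mathbb{Z}$ is Hopfian and that the groups involved are finite of equal order otherwise. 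It follows that $\epsilon$ is injective.

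The main point to check is the index bookkeeping. The shift by $l$ must be a bijection on $\mathbb{Z}_{kl}$, and the sign of $x_l$ must give a generator in both parity cases. Both are straightforward, so I expect no serious obstacle beyond stating the injectivity conclusion for the correct domain $G_{kl}(v)$.
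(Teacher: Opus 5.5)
Your proposal is correct and follows the paper's own argument: abelianize $u$ to $x_l^{-1}$, reindex the relators $\theta^i(u)\theta^{i+1}(u)$ to recover the relations of the abelian group $G_{kl}(x_0x_1)$, and then deduce injectivity of $\epsilon$. You are also right that the domain must be $G=G_{kl}(v)$ rather than $G_{kl}(u)$, and your generator-to-generator step (a surjection between isomorphic cyclic groups is an isomorphism) just makes explicit what the paper leaves as ``it follows that $\epsilon$ is injective.''
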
 

\begin{proof} Under abelianization $u=x_0x_lx_0^{-1}x_l^{-2} \overset{ab}{\mapsto} x_l^{-1}$ so that $v \circ u \overset{ab}{\mapsto} x_l^{-1}x_{l+1}^{-1}$. But then inverting and cyclically permuting each relation $x_l^{-1}x_{l+1}^{-1}=1$ provides the relations for the abelian group $G_{kl}(x_0x_1)$. Thus $Q^{ab} \cong G_{kl}(x_0x_1)$ and it follows that $\epsilon$ is injective. 
\qed
\end{proof}

Recall the class of groups $\mathcal{G}(\mathcal{B})$ are defined in Section \ref{Subsection: normal closure} and are $2$-knot groups by Theorem \ref{Theorem: Z by 1}. Applying Theorem \ref{Theorem: intro short exact} now provides the following.

\begin{Theorem} \label{Theorem: Q structure} \label{Theorem: knot groups} Let $l,k>0$ and $Q=Q(k,l)$. If $1\leq k \leq 3$, then $H=1$ and $Q$ is normally generated by $u=x_0x_lx_0^{-1}x_l^{-2}$. If additionally $kl$ is even then $Q \in \mathcal{G}(\mathcal{B})$, while if $kl$ is odd then $Q$ is finitely generated by involutions and is the quotient of a group in $\mathcal{G}(\mathcal{B})$ by the normal closure of $u^2$.  If $k>3$, then $Q$ is SQ-universal.
\end{Theorem}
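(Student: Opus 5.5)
The plan is to treat $Q=Q(k,l)=G_{kl}(v\circ u)$ as the composite group $\hat{G}$ attached to the pair $(\mathcal{G},\epsilon_u)$, with $\mathcal{G}=\mathcal{G}_{kl}(u_0u_1)$. Each clause will then be read off from Theorem \ref{Theorem: intro short exact}, Lemma \ref{Lemma: length 2} and Lemma \ref{Lemma: free product higman}, together with Corollary \ref{Corollary: quotient}.

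\emph{Case $1\le k\le 3$.} Lemma \ref{Lemma: free product higman} gives $H=G_{kl}(u)=1$. The exact sequence of Theorem \ref{Theorem: intro short exact} then shows that $Q$ equals the normal closure of $\epsilon(G)$. By Lemma \ref{Lemma: length 2}, $G$ is cyclic and generated by $u_0$, so $\epsilon(G)=\langle u\rangle$ and $Q$ is normally generated by $u$.

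\emph{Subcase $kl$ even.} Lemma \ref{Lemma: length 2} replaces $\mathcal{G}_{kl}(u_0u_1)$ by the presentation $\langle \mathbf{u}: u_iu_{i+1},\ 0\le i\le kl-2\rangle$ of $\mathbb{Z}$. This has $kl$ generators and $kl-1$ relators, so it lies in $\mathcal{D}$. The Tietze moves used there (deleting the consequence $u_{kl-1}u_0$) are performed in $F(\mathbf{u})$, so they commute with $\epsilon$. Hence $Q$ is also presented by $\langle \mathbf{x}: \epsilon(u_iu_{i+1}),\ 0\le i\le kl-2\rangle$, which is the composition of this deficiency-one presentation with $\mathcal{H}=\mathcal{G}_{kl}(u)$. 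The presentation $\mathcal{H}$ is balanced, has $kl$ generators, and presents the trivial group. Therefore this presentation of $Q$ lies in $\mathcal{B}$, and $Q\in\mathcal{G}(\mathcal{B})$.

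\emph{Subcase $kl$ odd.} Lemma \ref{Lemma: length 2} gives the presentation $\langle \mathbf{u}: u_0^2, u_iu_{i+1},\ 0\le i\le kl-2\rangle$ of $\mathbb{Z}_2$. Dropping $u_0^2$ leaves $\mathcal{G}'$, a deficiency-one presentation of $\mathbb{Z}$. The composite $\mathcal{P}'=\langle \mathbf{x}:\epsilon(u_iu_{i+1})\rangle$ then lies in $\mathcal{B}$, and $Q\cong G(\mathcal{P}')/\langle\langle \epsilon(u_0^2)\rangle\rangle=G(\mathcal{P}')/\langle\langle u^2\rangle\rangle$.

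For the involution claim, note that $u_0$ has order $2$ in $G$, so $u^2=1$ in $Q$. By Lemma \ref{Lemma: Knot Groups ab sc}, $Q^{ab}\cong\mathbb{Z}_2$ with $u\mapsto x_l^{-1}$, so $u\neq 1$ and $u$ is an involution. Since $Q$ is normally generated by $u$, it is generated by the conjugates of $u$, all of which are involutions. To get finitely many generators, I will use the conjugates $x_i^{\pm}$-words applied to $u$. Because $Q$ is finitely generated and normally generated by $u$, the conjugates $g u g^{-1}$ with $g$ ranging over $\mathbf{x}$-words generate $Q$. Each $x_j$ can be written in finitely many of these conjugates, and those finitely many conjugates then suffice to generate $Q$. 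This is the step needing the most care. I plan to invoke Corollary \ref{Corollary: quotient}(iii), which asserts finite generation by conjugates of the substitutions, and accept it as stated.

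\emph{Case $k>3$.} Lemma \ref{Lemma: free product higman} shows that $H$ is SQ-universal. By Theorem \ref{Theorem: intro short exact}, $Q$ surjects onto $H$, so Corollary \ref{Corollary: quotient}(ii) gives that $Q$ is SQ-universal.

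The main obstacle is purely bookkeeping: checking that the Tietze equivalence of Lemma \ref{Lemma: length 2} really yields a composite presentation of $Q$ of exact deficiency one whose substitution set is still $\mathcal{H}$. This holds because only relators are deleted and the substitution $\epsilon$ itself is unchanged.
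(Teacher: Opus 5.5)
Your proposal is correct and follows the paper's proof essentially step for step. You get $H=1$ from Lemma \ref{Lemma: free product higman} and normal generation by $u$ from the exact sequence; you use Lemma \ref{Lemma: length 2} to delete the redundant relator (or split off $u_0^2$), which places the presentation in $\mathcal{B}$; you conclude finite generation by conjugates of $u$ from finite generation of $Q$; and you apply Corollary \ref{Corollary: quotient}(ii) when $k>3$. Your additional checks — that relator deletions made in $F(\mathbf{u})$ transfer through $\epsilon$, and, via Lemma \ref{Lemma: Knot Groups ab sc}, that $u\neq 1$ so its conjugates are genuine involutions — make explicit points the paper leaves implicit.
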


\begin{proof} For $k>3$, Corollary \ref{Corollary: quotient} [ii] provides that the groups are SQ-universal. Assume $1\leq k \leq 3$. The groups $Q(k,l)$ are defined in terms of a substitution on $\mathcal{G}_{kl}(u_0u_1)$. When $kl$ is even, Lemma \ref{Lemma: length 2} shows the last relator is redundant and may be deleted. Hence in this case $Q(k,l)$ is isomorphic to a composite group defined by a substitution on a deficiency one presentation for $\mathbb{Z}$, with substitution group $H=1$, so $Q \in \mathcal{G}(\mathcal{B})$. If $kl$ is odd, then Lemma \ref{Lemma: length 2} shows $\mathcal{G}_{kl}(u_0u_1)$ has $u_0^2$ adjoined to a deficiency one presentation for $\mathbb{Z}$. Thus $Q(k,l)$ for $kl$ odd is the quotient of a group in $\mathcal{G}(\mathcal{B})$ modulo the normal closure of $\epsilon(u_0^2)=u^2$. Because $u$ normally generates $Q(k,l)$, which itself is finitely generated, we conclude $Q(k,l)$ is finitely generated by conjugates of $u$.
\qed
\end{proof}

We have the following result concerning finite quotients of $Q(k,1)$ and a more general result for arbitrary $l>0$. Thus $Q(k,l)$ has nontrivial finite quotients even though Higman's groups $G_k(u)$ do not when $k>3$. Also, except for $Q(1,1)\cong \mathbb{Z}_2$, where $u$ collapses to $u=x_0^{-1}$, these results imply $\epsilon(G)\cong G$ is not normal in $Q(k,l)$, i.e. $Q(k,l) \ncong G$ unless $k=l=1$. 

\begin{Proposition} \label{Proposition: factor dihedral} Let $k>0$. Then $Q=Q(k,1)$ modulo $M$, the normal subgroup generated by $\{x_i^2, \ 0 \leq i < k \}$, is isomorphic to $D_m$ the dihedral group of order $2m$ for $m=2^k-1$.
\end{Proposition}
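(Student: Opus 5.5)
The plan is to reduce the presentation of $Q/M$ to the one treated in Example \ref{Example: Dihedral}, and then run that example's argument, which no longer needs $n$ odd once the involution relations are imposed directly. Here $n=k$, $l=1$, and subscripts are taken modulo $k$. The relators of $\mathcal{Q}(k,1)$ are the shifts of
$$x_0x_1x_0^{-1}x_1^{-2}\,x_1x_2x_1^{-1}x_2^{-2}=x_0x_1x_0^{-1}x_1^{-1}x_2x_1^{-1}x_2^{-2}.$$
A presentation of $Q/M$ is obtained by adjoining the relators $x_i^2$ for $0\le i<k$. Modulo these we have $x_i^{-1}=x_i$ and $x_i^{-2}=1$. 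The $i$-th relator therefore becomes $x_ix_{i+1}x_ix_{i+1}x_{i+2}x_{i+1}$, which is equivalent to
$$(x_ix_{i+1})^2=x_{i+1}x_{i+2}, \qquad 0\le i<k.$$
This is exactly the relation used in Example \ref{Example: Dihedral}.

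Next I show that $Q/M$ is a quotient of $D_m$. Put $s=x_0$ and $r=x_0x_1$. Iterating the relation around the cycle of subscripts gives $x_0x_1=(x_{k-1}x_0)^2=\dots=(x_0x_1)^{2^k}$, hence $r^m=1$ with $m=2^k-1$. Using $x_{i+2}=x_{i+1}^{-1}(x_ix_{i+1})^2$ and inducting on $i$, exactly as in that example, gives $x_ix_{i+1}=r^{2^i}$ and $x_i=s\,r^{2^i-1}$. So $s$ and $r$ generate $Q/M$. Finally, $srsr=x_0x_0x_1x_0x_0x_1=x_1^2=1$ and $s^2=1$. It follows that $s\mapsto s$, $r\mapsto r$ defines an epimorphism $D_m\twoheadrightarrow Q/M$.

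For the converse, I define $\phi$ on $F(\mathbf{x})$ by $x_i\mapsto sr^{2^i-1}$, where $s,r$ are the standard generators of $D_m$. This assignment is compatible with subscripts mod $k$, because $sr^{2^k-1}=sr^m=s$. Each image is a reflection, so $\phi(x_i^2)=1$. Moreover $\phi(x_ix_{i+1})=r^{-(2^i-1)}r^{2^{i+1}-1}=r^{2^i}$, whose square $r^{2^{i+1}}$ equals $\phi(x_{i+1}x_{i+2})$. So every relator of $Q/M$ maps to $1$. Since $\phi(x_0)=s$ and $\phi(x_0x_1)=r$, the map is onto and inverts the epimorphism above, giving $Q/M\cong D_m$. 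The degenerate case $k=1$ gives $m=1$ and $Q/M\cong\mathbb{Z}_2$, consistent with $Q(1,1)\cong\mathbb{Z}_2$.

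The only delicate step is the bookkeeping of the recurrence and exponents modulo $k$, in particular checking that $x_k=x_0$ is consistent with $x_k=sr^{2^k-1}$. Both directions handle this by the identity $r^{2^k-1}=1$.
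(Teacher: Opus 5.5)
Your proof is correct and takes essentially the paper's route: impose $x_i^2=1$ to reduce $Q/M$ to the relations $(x_ix_{i+1})^2=x_{i+1}x_{i+2}$, exactly as for the presentation in Example \ref{Example: Dihedral}, then rerun that example's recurrence to get $r^m=1$, $x_i=sr^{2^i-1}$ and $srsr=1$. Your explicit check that $x_i\mapsto sr^{2^i-1}$ kills every relator, including those whose subscripts wrap around mod $k$, supplies the reverse homomorphism that the paper only asserts when it says ``mappings are given by''.
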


\begin{proof} Let $u'=x_0x_1x_0^{-1}$ with $G=G_{n}(v)$. Then  $G_{k}(\epsilon_{u'}(v)) \cong D_m$ for odd $k$ where $m=2^k-1$, and where $x_i^2=1$ for $0 \leq i< k$, by Example \ref{Example: Dihedral}. In fact, if we adjoin the relations $x_i^2=1$ for $0 \leq i< k$ to $\mathcal{G}_{k}(\epsilon_{u'}(v))$ when $k$ is even, then the same proof shows $\mathcal{G}_{k}(\epsilon_{u'}(v))$ modulo the normal closure of $\{x_i^2, \ 0\leq i<k \}$ is isomorphic to $D_m$, too. Thus the dihedral group $D_m$ may also be presented by 
\begin{align*}
D_m & \cong \langle \mathbf{x}: x_ix_{i+1}x_i^{-1} x_{i+1}x_{i+2}x_{i+1}^{-1}, \ x_i^2, \ 0 \leq i< k \rangle \\
& \cong \langle \mathbf{x}: (x_ix_{i+1}x_i^{-1})x_{i+1}^{-2}(x_{i+1}x_{i+2}x_{i+1}^{-1})x_{i+2}^{-2}, \ x_i^2, \ 0 \leq i< k \rangle \\
&= \langle \mathbf{x}: \theta^i(\epsilon_u(u_0u_1)), \ x_i^2, \ 0 \leq i< k \rangle 
\end{align*}
which is a presentation for $Q/M$. 
\qed
\end{proof}

When $l>1$, we may rearrange $Q(k,l)/M$ into disjoint subpresentations with amalgamations to obtain the following.

\begin{Theorem} \label{Theorem: Q mod M} The group $Q=Q(k,l)$ modulo the normal subgroup generated by $\{x_i^2, \ 0 \leq i < k \}$ is isomorphic to a free product of $l$ copies of the dihedral group $D_m$ for $m=2^k-1$, with each factor generated by $\{ x_{a+jl}, \ 0 \leq j < k \}$ for $0 \leq a < l$, amalgamated along the subgroups generated by the involutions $\theta^i (u)$ for $0 \leq i < l$.
\end{Theorem}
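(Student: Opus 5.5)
The plan is to rewrite the presentation of $Q/M$ so that it visibly splits into $l$ disjoint blocks of generators tied together by one common element. I read the normal generators of $M$ as $x_i^2$ for all $0\leq i<kl$; for $l=1$ this is exactly Proposition \ref{Proposition: factor dihedral}. Modulo $M$ every $x_i$ is an involution, so $x_i^{-1}=x_i$ and the substitution word becomes
$$w_i:=\theta^i(u)\equiv x_ix_{l+i}x_i\, x_{l+i}^{-2}\equiv x_ix_{l+i}x_i.$$
Each $w_i$ is a conjugate of an involution, hence satisfies $w_i^2=1$. The defining relators $\theta^i(v\circ u)$ of $\mathcal{Q}(k,l)$ read $w_iw_{i+1}=1$ modulo $M$, that is, $w_{i+1}=w_i^{-1}=w_i$. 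Since the indices run cyclically over $0\leq i<kl$, all the $w_i$ are equal in $Q/M$.

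I would then use Tietze transformations to adjoin a generator $t$ with relation $t=w_0$. The relators $w_iw_{i+1}$, together with the $x_i^2$, can be replaced by the relations $w_i=t$ for $0\leq i<kl$. This replacement is reversible: from $w_i=t=w_{i+1}$ and $t^2=w_0^2=1$ one recovers $w_iw_{i+1}=t^2=1$. The relation $t^2=1$ is itself a consequence of $t=x_0x_lx_0$ and $x_0^2=x_l^2=1$, so it need not be listed.

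The resulting presentation of $Q/M$ has the following shape.
\begin{itemize}
\item The generators are $t$ together with the $x_{a+jl}$, for $0\leq a<l$ and $0\leq j<k$.
\item Each relation $x_ix_{i+l}x_i=t$ involves only $t$ and generators from the single residue class $a\equiv i \pmod l$.
\end{itemize}
Hence $Q/M$ is the free product of the $l$ groups
$$B_a=\langle t, y_0,\dots,y_{k-1}: y_j^2,\ y_jy_{j+1}y_j=t,\ 0\leq j<k \rangle, \qquad y_j=x_{a+jl},$$
amalgamated along their common subgroup $\langle t \rangle$. In each $B_a$ the generator $t$ can be eliminated, and then
$$B_a\cong \langle \mathbf{y}: y_jy_{j+1}y_j^{-1}y_{j+1}y_{j+2}y_{j+1}^{-1},\ y_j^2,\ 0\leq j<k\rangle .$$
This is exactly the presentation shown to give $D_m$, $m=2^k-1$, in the proof of Proposition \ref{Proposition: factor dihedral}; that proof covers even $k$ as well as odd $k$. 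Under this isomorphism $t$ corresponds to the image of $u$ in the block.

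It remains to check that the amalgamated subgroups are genuinely $\mathbb{Z}_2$, so that the amalgam is the one described in the statement. Under the isomorphism $B_a\cong D_m$ from Example \ref{Example: Dihedral}, $y_0\mapsto s$ and $y_1\mapsto sr$, so $t=y_0y_1y_0\mapsto s\,sr\,s=rs$. This is a reflection, hence an involution and nontrivial. So each factor contains $\langle t\rangle\cong\mathbb{Z}_2$, and $Q/M$ is the amalgamated free product of $l$ copies of $D_m$ over $\mathbb{Z}_2$. The factors are generated by $\{x_{a+jl}\}$, and the common subgroup is generated by the involutions $\theta^i(u)$, $0\leq i<l$, which all coincide with $t$.

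The main obstacle I expect is the index bookkeeping. One must check that the relator $\theta^i(v\circ u)$ links block $a$ to block $a+1$ only through $t$. One must also check that, after the reindexing $y_j=x_{a+jl}$, the cyclic shift in each block has period exactly $k$, including the wrap-around at $kl$. For $l=1$ the two blocks coincide, and the argument reduces to Proposition \ref{Proposition: factor dihedral}.
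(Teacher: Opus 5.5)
Your proof is correct and follows essentially the paper's route. You use $x_i^2=1$ to force all $\theta^i(u)$ to coincide, split the generators into the $l$ residue classes mod $l$, identify each block with the dihedral presentation of Example~\ref{Example: Dihedral} and Proposition~\ref{Proposition: factor dihedral}, and amalgamate along the common involution. Two small additions go beyond the paper: adjoining $t$ explicitly, and checking that $t\mapsto rs$ has order exactly $2$ in each factor, which the paper leaves implicit. Your reading of $M$ as generated by all $x_i^2$, $0\le i<kl$, matches what the paper's proof actually uses.
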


\begin{proof} Let $M$ be the normal closure of $ \{x_i^2, \ 0 \leq i < k \}$ in $Q=Q(k,l)$ for $k,l>0$. Proposition \ref{Proposition: factor dihedral} proves $Q / M$ is isomorphic to $D_m$ for $m=2^k-1$ when $l=1$. For arbitrary $l>0$, the relations $x_i^2=1$ and the shift dynamics of $G$ imply $\theta^i(u)= \theta^j(u)$ for $0 \leq i,j< kl$. Thus for $u''=x_0x_lx_0^{-1}$, the factor group $Q/ M$ satisfies
\begin{align*}
Q \big/ M & \cong \langle \mathbf{x}: \theta^i(u \theta(u)), \theta^i(u)= \theta^j(u), x_i^2, \ 0 \leq i, j < lk \rangle \\
& \cong \langle \mathbf{x}: \theta^i(u'' \theta(u'')), \theta^i(u'')= \theta^j(u''), x_i^2, \ 0 \leq i, j < lk \rangle \\
& \cong \langle \mathbf{x}: \theta^i(u'' \theta^l(u'')), \theta^i(u'')= \theta^j(u''), x_i^2, \ 0 \leq i, j < lk \rangle .
\end{align*}
This presentation contains $l$ disjoint subpresentations generated by $\{ x_{a+jl}, \ 0 \leq j < k \} $ for $0 \leq a < l$ and having relations $\{ \theta^{a+jl}(u'' \theta^l(u'')), \ 0 \leq j< k \}$. Each subpresentation is isomorphic to $G_k(\epsilon_{u'}(v))$ of Example \ref{Example: Dihedral}, and therefore presents $D_m$ for $m=2^k-1$. In each factor isomorphic to $D_m$ the relations $\theta^a(u'')= \theta^{a+jl}(u'')$ for $0 \leq j < k$ are already satisfied, so we need only retain $u''=\theta^i (u'')$ for $0 \leq i < l$ in our presentation for $Q/ M$. Consequently, $Q/ M$ is isomorphic to a free product of $l$ copies of $D_m$, amalgamated along the subgroups of order 2 determined by the relations $u''=\theta^i (u'')$ for $0 \leq i < l$. \qed
\end{proof}

As discussed in Section \ref{Subsection: normal closure}, the deficiency condition for a 2-knot group is stronger than the homology condition for an $n$-knot group with $n\geq 3$. In fact, we will prove that the second homology vanishes for all groups $Q(k,l)$.

\begin{Lemma} For $k,l>0$ and $Q=Q(k,l)$, the second homology group $H_2(Q)$ is trivial.
\end{Lemma}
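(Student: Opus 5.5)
The plan is to read off $H_2(Q)$ from the cellular model of a suitable presentation of $Q$, using the fact that for any finite presentation complex $K$ with $\pi_1(K)\cong Q$ there is a surjection $H_2(K)\twoheadrightarrow H_2(Q)$. This comes from Hopf's theorem, or equivalently from the cokernel of the Hurewicz map $\pi_2(K)\to H_2(K)$. It therefore suffices to find a presentation of $Q$ whose cellular model $K$ has $H_2(K)=0$.

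For a finite $2$-complex $K$ with one $0$-cell, $n$ one-cells and $m$ two-cells, $H_2(K)$ is the kernel of the cellular boundary map $C_2\to C_1$. It is therefore a subgroup of the free abelian group $\mathbb{Z}^m$, hence free abelian. Its rank is computed from the Euler characteristic:
$$\mathrm{rank}\,H_2(K)=\chi(K)-1+b_1(K)=m-n+b_1,$$
where $b_1$ is the rank of $Q^{\mathrm{ab}}$. So I only need to arrange $m-n+b_1=0$, and then $H_2(K)=0$ and hence $H_2(Q)=0$.

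I split into cases according to Lemma \ref{Lemma: Knot Groups ab sc}.

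\emph{Case $kl$ odd.} Take the cyclic presentation $\mathcal{Q}(k,l)$ itself, which is balanced, so $m=n=kl$. Here $Q^{\mathrm{ab}}\cong\mathbb{Z}_2$, so $b_1=0$. The rank formula gives $\mathrm{rank}\,H_2(K)=0$. Since $H_2(K)$ is free abelian of rank $0$, it vanishes, and so does $H_2(Q)$.

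\emph{Case $kl$ even.} Now $Q^{\mathrm{ab}}\cong\mathbb{Z}$, so $b_1=1$ and the balanced presentation leaves rank $1$; I must remove a relator. By Lemma \ref{Lemma: length 2}, when $kl$ is even the relator $u_{kl-1}u_0$ lies in the normal closure in $F(\mathbf{u})$ of the other relators $u_iu_{i+1}$, $0\le i\le kl-2$. Applying the homomorphism $\epsilon_u$ shows that $\theta^{kl-1}(v\circ u)=\epsilon_u(u_{kl-1}u_0)$ lies in the normal closure in $F(\mathbf{x})$ of the remaining relators $\theta^i(v\circ u)$. This is exactly the deletion already used in the proof of Theorem \ref{Theorem: Q structure}. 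It yields a deficiency one presentation of $Q$ with $n=kl$ generators and $m=kl-1$ relators. Its cellular model $K'$ has $\mathrm{rank}\,H_2(K')=(kl-1)-kl+1=0$, so $H_2(K')=0$ and therefore $H_2(Q)=0$.

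The only step needing care is the even case. There one must check that the redundancy of the relator holds already at the level of free groups, i.e.\ as a consequence in $F(\mathbf{u})$ and not merely as an equality in $G$, so that it transports under $\epsilon_u$. Lemma \ref{Lemma: length 2} provides exactly that, since its identities are Tietze transformations in the free group.
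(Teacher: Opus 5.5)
Your proof is correct. In the odd case it matches the paper's argument: Euler characteristic, freeness of $H_2(K)$, then Hopf.

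In the even case you take a different route. You delete the redundant relator $\theta^{kl-1}(v\circ u)=\epsilon_u(u_{kl-1}u_0)$ to get a deficiency one presentation whose complex $K'$ already has $H_2(K')=0$. The paper instead keeps the balanced cyclic presentation and computes $H_2(K)\cong\mathbb{Z}$, generated by $\sum_i(-1)^ic^2_i$. It then shows this class is spherical by pushing the spherical diagram over $\mathcal{G}_{kl}(u_0u_1)$ forward along $\epsilon_u$. So the Hurewicz map is onto, and Hopf's exact sequence gives $H_2(Q)=0$.

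Both arguments rest on the same fact. The identity among relations that makes the last relator redundant is exactly what that spherical diagram records.

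Your version avoids computing $\ker\partial$ and the Hurewicz image. It also makes clear that the vanishing is just the general fact that a deficiency one presentation of a group with infinite cyclic abelianization has trivial $H_2$. This is the ``deficiency is stronger than the homology condition'' remark preceding the lemma. It holds for every $k$, not only the cases $k\le 3$ where $Q\in\mathcal{G}(\mathcal{B})$.

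The paper's route gives slightly more information. It produces an explicit nontrivial element of $\pi_2$ of the balanced cyclic presentation complex, so that presentation is not aspherical when $kl$ is even.
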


\begin{proof} Let $K$ be the cellular model of $\mathcal{P}_{kl}(v \circ u)$. If $kl$ is odd, then $H_2(K)=0$ since $H_2(K)$ is free abelian and $H_1(K)\cong G_{kl}(v)^{ab} \cong \mathbb{Z}_2$ by Lemmas \ref{Lemma: length 2} and \ref{Lemma: Knot Groups ab sc}. In particular, the Euler characteristic of $K$ is $\chi(K)=1-kl+kl=1$ where in homology
\begin{align*} 
\chi(K) & =\mathrm{rank}(H_0(K))- \mathrm{rank}(H_1(K)) + \mathrm{rank}(H_2(K)) \\
& = 1- 0 + \mathrm{rank}(H_2(K))
\end{align*}
so that $ \mathrm{rank}(H_2(K))=0$. 

Now suppose $kl$ is even. Here $H_1(K) \cong G_{kl}(v)^{ab}\cong \mathbb{Z}$ so that $1=1-1+\mathrm{rank}(H_2(K))$ implies $H_2(K)\cong \mathbb{Z}$. Indeed, for $0\leq i< kl$, if $c^2_i$ is the $2$-cell with attaching map $\theta_{\mathbf{x}}^i(v \circ u)$ then $\partial c^2_i = -c^1_{i+l}-c^1_{i+l+1}$ and 
\begin{align*}
H_2(K) \cong \mathrm{Ker}(\partial)  & = \left\langle \sum_{i=0}^{kl-1} (-1)^ic^2_i \right\rangle \\
& = \langle c^2_0 - c^2_1 + \dots - c^2_{kl-1} \rangle .
\end{align*}
Now the spherical diagram in Figure \ref{Comp_Length_Two} over $\mathcal{P}_n(v)$ passes (via $\epsilon_u$) to a spherical diagram $\Sigma$ over $\mathcal{P}_n(v \circ u)$. The homology class of $\Sigma$ is $h([\Sigma])=  \sum_{i=0}^{kl-1} (-1)^ic^2_i$, for $h: \pi_2(K) \rightarrow H_2(K)$ the Hurewicz homomorphism (\cite{Sier2dh}, p. 76). Thus $\pi_2(K) \twoheadrightarrow H_2(K)$ is surjective. Finally, Hopf's Theorem (\cite{BrownCoh}, pp. 41-42) provides that  
$$\pi_2(K) \rightarrow H_2(K) \rightarrow H_2(\pi_1(K)) \rightarrow 1$$
is exact so whether $kl$ is even or odd we have that $H_2(\pi_1(K)) =H_2(Q(k,l)) =0$. \qed
\end{proof}

\begin{figure} 
\centering
\includegraphics[width=.4\linewidth]{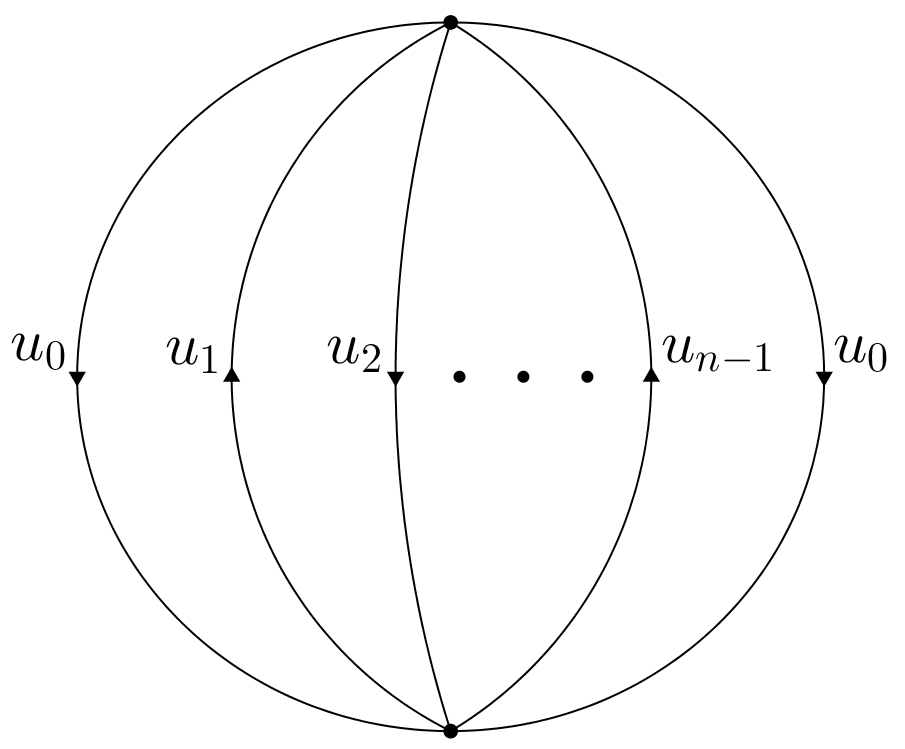}
\caption[Spherical diagram over $\langle u_i: u_i u_{i+1} \ 0 \leq i < n\rangle $ ]{Portion of a spherical diagram over $\langle u_i: u_i u_{i+1}, \ 0 \leq i < n\rangle $.}
\label{Comp_Length_Two}
\end{figure}

\subsection{$Q(k,l)$ for small $k,l$} \label{Subsection: Q small k,l}

We now investigate the groups $Q(k,l)$ for small values of $k$ and $l$. It will become clear that these groups serve as exemplars for a  decomposition of the groups $Q(k,l)$ with arbitrary $k$ and $l$. \\

\noindent \textbf{Case} $\textbf{k=1}: \ $ For $k=1$ the substitution collapses to $u=x_0^{-1}$, so $Q(1,l) \cong \mathbb{Z}$ or $\mathbb{Z}_2$ depending on if $l$ is even or odd, respectively. \\

\noindent\textbf{Case}  $\textbf{k=2}: \ $ The group $Q(2,1)$ admits the well known presentation $\langle x, y: xyx= yxy \rangle$ for the group $T$ of the trefoil knot. See for example \cite[Chapter 3]{RolfKnot} where it is also shown that $T$ admits the presentation $\langle a, b: a^2=b^3 \rangle$.

\begin{Lemma} \label{Lemma: Q21} The group $Q(2,1)$ is isomorphic to the trefoil group $T$.
\end{Lemma}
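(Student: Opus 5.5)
To prove Lemma \ref{Lemma: Q21}, I will reduce $\mathcal{Q}(2,1)$ by Tietze transformations to the presentation $\langle a, b: a^2=b^3\rangle$ of $T$ recalled above from \cite[Chapter 3]{RolfKnot}. Here $k=2$, $l=1$ and $kl=2$, so $u=x_0x_1x_0^{-1}x_1^{-2}$ and $\theta(u)=x_1x_0x_1^{-1}x_0^{-2}$. The two defining relators of $\mathcal{G}_2(v\circ u)$ are $u\theta(u)$ and $\theta(u)u$. The second is a cyclic permutation of the first, which matches Lemma \ref{Lemma: length 2}: for even $kl$ the last relator is redundant. So
$$Q(2,1)\cong\langle x_0,x_1: x_0x_1x_0^{-1}x_1^{-2}x_1x_0x_1^{-1}x_0^{-2}\rangle=\langle x_0,x_1: x_0x_1x_0^{-1}x_1^{-1}x_0x_1^{-1}x_0^{-2}\rangle,$$
a one-relator presentation.

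First I conjugate the relator by $x_0^{-2}$, i.e. cyclically permute it, to get $x_0^{-1}x_1x_0^{-1}x_1^{-1}x_0x_1^{-1}$. Next, as in Example \ref{Example: T bar not K1}, I introduce $y=x_1x_0^{-1}$ and eliminate $x_1=yx_0$. Free reduction then collapses the relator to $x_0^{-1}yx_0^{-1}y^{-2}$, that is, $yx_0^{-1}=x_0y^2$. Writing $a=x_0$ and $b=y^{-1}$ and inverting both sides gives the relation $aba=b^2$.

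Finally I change generators to $w=ab$, so $a=wb^{-1}$. Substituting into $aba=b^2$ gives $wb^{-1}\,b\,wb^{-1}=b^2$, i.e. $w^2=b^3$. The substitution $(a,b)\mapsto(w,b)$ is a Nielsen transformation, so this is a Tietze equivalence, and hence $Q(2,1)\cong\langle w,b: w^2=b^3\rangle\cong T$. As a consistency check, Lemma \ref{Lemma: Knot Groups ab sc} says $Q(2,1)^{ab}\cong\mathbb{Z}$, which is the abelianization of the trefoil group.

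The only real obstacle is finding the right change of variables: first $y=x_1x_0^{-1}$, which makes the relator collapse under free reduction, and then $w=ab$, which turns $aba=b^2$ into $w^2=b^3$. The remaining free reductions are routine and I will simply display them. As an optional further check, I would also give explicit mutually inverse maps $x_0\mapsto b^{-1}w$ and $x_1\mapsto b^{-1}wb^{-1}$ between the two presentations.
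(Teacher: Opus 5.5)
Your proof is correct and follows essentially the paper's route: discard the second relator as a cyclic permutation of the first, cyclically permute to $x_0^{-1}x_1x_0^{-1}x_1^{-1}x_0x_1^{-1}$, and change generators to reach a standard presentation of $T$. The only difference is the final step. The paper substitutes $\bar{x}_0=x_0^{-1}$, which turns this relator directly into the braid relation $\bar{x}_0x_1\bar{x}_0=x_1\bar{x}_0x_1$; your further substitutions $y=x_1x_0^{-1}$, $b=y^{-1}$, $w=ab$ leading to $w^2=b^3$ are valid but not needed.
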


\begin{proof} Here $u=x_0x_1x_0^{-1}x_1^{-2}$ and the first relator 
$$\epsilon(u_0u_1)=x_0 x_1x_0^{-1}x_1^{-2}x_1x_0x_1^{-1}x_0^{-2}$$
is conjugate to $x_0^{-1}x_1x_0^{-1}x_1^{-1}x_0x_1^{-1}$. In fact, $\epsilon(u_0u_1)$ is also conjugate to the second relator $\epsilon(u_1u_0)$. Thus $Q(2,1) \cong \langle x_0, x_1: x_0^{-1}x_1x_0^{-1}x_1^{-1}x_0x_1^{-1} \rangle$. The change of variables $\bar{x}_0= x_0^{-1}$ and writing the relator as the relation $\bar{x}_0x_1 \bar{x}_0= x_1 \bar{x}_0x_1$ yields the desired presentation.
\qed
\end{proof}

Consider now the group $Q(2,2)$. We show it is a free product with amalgamation in terms of the group given by 
$$\overline{T}= \langle a, b: aba^{-1}b^{-2}=bab^{-1}a^{-2} \rangle$$
which recall was given in Example \ref{Example: T bar not K1} as an example of a group in $\mathcal{G}(\mathcal{B})$ which is a 2-knot group but not a 1-knot group.

\begin{Lemma} \label{Lemma: Q(2,2)} The group $Q(2,2)$ is isomorphic to a free product of two copies of $\bar{T}$ amalgamated along the infinite cyclic subgroups generated by $u=aba^{-1}b^{-2}$ and $\theta(u)=bab^{-1}a^{-2}$, respectively.  That is, $Q(2,2) \cong \bar{T} \underset{\mathbb{Z}}{\ast} \bar{T}$.
\end{Lemma}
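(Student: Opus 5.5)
The plan is to write out the cyclic presentation $\mathcal{Q}(2,2)=\mathcal{G}_4(v\circ u)$ explicitly and observe that it splits along the partition of the generators into $\{x_0,x_2\}$ and $\{x_1,x_3\}$, glued by a single relation.

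\textbf{Splitting the presentation.} Here $kl=4$ and $u=x_0x_2x_0^{-1}x_2^{-2}$. Write $w_i=\theta^i(u)=x_ix_{i+2}x_i^{-1}x_{i+2}^{-2}$ for $0\le i<4$, with subscripts modulo $4$. Then
$$\mathcal{Q}(2,2)=\langle x_0,x_1,x_2,x_3: w_0w_1,\ w_1w_2,\ w_2w_3,\ w_3w_0\rangle .$$
The words $w_0=x_0x_2x_0^{-1}x_2^{-2}$ and $w_2=x_2x_0x_2^{-1}x_0^{-2}$ involve only $x_0,x_2$. The words $w_1,w_3$ involve only $x_1,x_3$.

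Following Lemma \ref{Lemma: length 2}, the relator $w_3w_0$ is a consequence of the others, since $kl$ is even. From $w_0w_1$ and $w_1w_2$ we obtain $w_0=w_1^{-1}=w_2$. From $w_1w_2$ and $w_2w_3$ we obtain $w_1=w_3$. Tietze transformations therefore give
$$Q(2,2)\cong\langle x_0,x_2,x_1,x_3:\ w_0=w_2,\ w_1=w_3,\ w_0=w_1^{-1}\rangle .$$

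\textbf{Identifying the factors.} The subpresentation $\langle x_0,x_2: w_0=w_2\rangle$ is exactly $\bar{\mathcal{T}}$ under $a=x_0$, $b=x_2$, with $w_0=aba^{-1}b^{-2}=u$. Likewise $\langle x_1,x_3: w_1=w_3\rangle$ is a copy of $\bar{\mathcal{T}}$ under $a=x_1$, $b=x_3$. The one remaining relation $w_0=w_1^{-1}$ identifies the cyclic subgroup $\langle u\rangle$ of the first factor with the cyclic subgroup $\langle \theta(u)\rangle$ of the second. We may replace the generator $\theta(u)$ by its inverse in the second factor, so this gluing is an amalgamation. Hence $Q(2,2)$ is the pushout $\bar T\ast_{\langle u\rangle=\langle\theta(u)^{-1}\rangle}\bar T$.

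\textbf{Main obstacle.} To call this a genuine free product with amalgamation along $\mathbb{Z}$, we need $u$, and symmetrically $\theta(u)$, to have infinite order in $\bar T$. I expect this to be the step needing justification, and I would use Example \ref{Example: T bar not K1}. There $\bar{\mathcal{T}}\in\mathcal{B}$ arises from the presentation $\langle u_0,u_1:u_0=u_1\rangle$ of $\mathbb{Z}$, and Lemma \ref{Lemma: Z by 1} shows the natural map $\epsilon:\mathbb{Z}\to\bar T$ is injective. Its image is generated by $\epsilon(u_0)=u=\epsilon(u_1)=\theta(u)$ in $\bar T$, so $u$ has infinite order. Alternatively, $\bar T^{ab}\cong\mathbb{Z}$ and $u\mapsto b^{-1}$ under abelianization, which also gives infinite order.

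Both amalgamated subgroups are therefore infinite cyclic, and the identification is an isomorphism between them. This gives $Q(2,2)\cong\bar T\ast_{\mathbb{Z}}\bar T$, as stated.
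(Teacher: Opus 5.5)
Your proposal is correct and follows essentially the same route as the paper: you discard the redundant relator $w_3w_0$, regroup the remaining relators as $w_0=w_2$ in $F(x_0,x_2)$, $w_1=w_3$ in $F(x_1,x_3)$, plus the gluing relation $w_0=w_1^{-1}$, and read off $\bar T\ast_{\mathbb{Z}}\bar T$. The only difference is how you show the amalgamated subgroups are infinite cyclic. The paper appeals to $\bar T$ being a torsion-free one-relator group, whereas your abelianization argument ($u\mapsto b^{-1}$ in $\bar T^{ab}\cong\mathbb{Z}$) is more direct and also gives $u\neq 1$, which the torsion-freeness argument tacitly needs.
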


\begin{proof} Delete the superfluous relator $u_4u_0$ from $\mathcal{G}_4(u_0u_1)$, and then there is the equivalence
$$\{ u_0u_1, u_1u_2, u_2u_3 \} \leftrightarrow \{u_0u_2^{-1}, u_1u_3^{-1}, u_2u_3 \} \leftrightarrow \{u_0u_2^{-1}, u_1u_3^{-1}, u_0u_1 \}$$
using Nielsen transformations. In terms of $\mathcal{Q}(2,2)$, the superfluous relator is $\epsilon(u_4u_0)$. Notice the relator $\epsilon(u_0u_2^{-1})$ is a word in $F(x_0, x_2)$; $\epsilon(u_1u_3^{-1})$ is in $F(x_1, x_3)$. Using that $\bar{T} \cong \langle x_i, x_{i+2}: \epsilon(u_{i}u_{i+2}^{-1}) \rangle$ for $i=0,1$, the group $Q(2,2)$ is isomorphic to a free product of two copies of $\bar{T}$ with amalgamation determined by the third (and final) relation $u=\theta(u)^{-1}$. The group $\bar{T}$ is a torsion free one-relator group, so the amalgamation is along an infinite cyclic group.
\qed
\end{proof}

Notice the groups $T$ and $\bar{T}$ both admit a 2-generator, one relator presentation of the form $u\theta(u)^{\pm 1}$. In fact, they also share a common finite quotient. Incidentally, this provides another example of a finite group given by a composite presentation.

\begin{Lemma} \label{Lemma: GL23}  The quotient of the groups $Q(2,1) \cong T$ and $\bar{T}$ modulo the normal subgroup $\langle \langle u^2 \rangle \rangle$ are isomorphic to  $GL(2,3)$. Modulo $\langle \langle x_1^2 \rangle \rangle$, each of the groups is isomorphic to the symmetric group $S_3$. 
\end{Lemma}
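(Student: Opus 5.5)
The first observation is that both quotients in the statement have the same presentation, so only one group has to be identified. Write $u=x_0x_1x_0^{-1}x_1^{-2}$ and $\theta(u)=x_1x_0x_1^{-1}x_0^{-2}$. Then $Q(2,1)\cong T$ has the single relation $u\theta(u)=1$ (the second cyclic relator is conjugate to the first, as in Lemma \ref{Lemma: Q21}), and $\bar{T}$ has the single relation $u=\theta(u)$. Adjoining $u^2=1$ turns either relation into the other, since $u=u^{-1}$. Hence
$$T/\langle\langle u^2\rangle\rangle\cong\langle x_0,x_1:\ u\theta(u),\ u^2\rangle\cong\langle x_0,x_1:\ u\theta(u)^{-1},\ u^2\rangle\cong\bar{T}/\langle\langle u^2\rangle\rangle .$$
So it suffices to show that the single group $P=\langle x_0,x_1: u^2,\ \theta(u)^2,\ u\theta(u)\rangle$ is isomorphic to $GL(2,3)$.

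For this I will argue by an explicit epimorphism plus an order bound. First, search for matrices $A,B\in GL(2,3)$ with $\det A=\det B=-1$ (as the abelianization $\mathbb{Z}_2$ of $P$ forces) such that $U=ABA^{-1}B^{-2}$ is an involution and $BAB^{-1}A^{-2}=U$. Once found, check that $\langle A,B\rangle=GL(2,3)$ by exhibiting elements of orders $8$ and $3$, which generate $GL(2,3)$. This gives an epimorphism $P\twoheadrightarrow GL(2,3)$, so $|P|\ge 48$.

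The reverse inequality $|P|\le 48$ is the step I expect to be the main obstacle. I would first try Todd--Coxeter enumeration, run in GAP as elsewhere in the paper, over the cosets of $\langle x_0\rangle$, expecting a small index. A hand version would use the structure already available: $P$ is normally generated by the involution $u$ (since $H=1$, Theorem \ref{Theorem: Q structure}), and its abelianization is $\mathbb{Z}_2$. Apply Reidemeister--Schreier to the index-two kernel, or compare with the $SL(2,3)$ entry of Figure \ref{Table: 1}, to show that the kernel has order $24$. The final identification can be confirmed with StructureDescription.

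For the $S_3$ quotients, treat $T$ in its presentation $\langle x,y: xyx=yxy\rangle$, where $x=x_0^{-1}$ and $y=x_1$. Adding $y^2=1$ forces $x^2=1$, because $x$ and $y$ are conjugate. The braid relation then becomes $(xy)^3=1$, giving the Coxeter presentation of $S_3$.

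For $\bar{T}$, work in $\langle a,b: aba^{-1}b^{-2}=bab^{-1}a^{-2},\ b^2\rangle$. The relation becomes $aba^{-1}=baba^{-2}$. The plan is to derive $a^2=1$, for example by conjugating this relation by $b$ and comparing; the abelianization already gives $a\equiv b$ with $a^2\equiv 1$. With $a^2=1$ the relation reduces to $(ab)^3=1$, again a presentation of $S_3$. If the hand derivation of $a^2=1$ stalls, a coset enumeration bounding the order by $6$, together with the obvious surjection onto $S_3$, finishes the proof.
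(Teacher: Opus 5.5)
Your proposal is correct and is essentially the paper's proof. Both arguments observe that once $u^2=1$ the relators $u\theta(u)$ and $u\theta(u)^{-1}$ are interchangeable, so the two quotients share one presentation, and both rest the identification with $GL(2,3)$ on a GAP coset enumeration. Your lower-bound search does succeed: for example, $x_0\mapsto\left(\begin{smallmatrix}0&1\\1&1\end{smallmatrix}\right)$ and $x_1\mapsto\left(\begin{smallmatrix}0&1\\1&-1\end{smallmatrix}\right)$ over $\mathbb{F}_3$ give $U=\theta(U)=\left(\begin{smallmatrix}0&-1\\-1&0\end{smallmatrix}\right)$ and generate $GL(2,3)$.

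Your hand argument for the $S_3$ quotients is more explicit than the paper's GAP check. The $\bar{T}$ case you left open closes at once: right-multiplying $aba^{-1}=baba^{-2}$ by $a^2$ gives $aba=bab$. Alternatively, as the paper notes, $x_1^2$ already forces $u^2$, so both quotients by $\langle\langle x_1^2\rangle\rangle$ are the same quotient of your group $P$.
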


\begin{proof} The relation $u^2=1$ implies $\theta(u)^2=1$ so that $u\theta(u)=u\theta(u)^{-1}$ in each of the factor groups. Hence they are isomorphic. That the quotient is finite and isomorphic to $GL(2,3)$ can be verified in GAP. Now $u^2=1$ is a consequence $x_1^2=1$. That the quotient modulo $\langle \langle x_1^2 \rangle \rangle$ is $S_3$ can also be verified using GAP.
\qed
\end{proof}

Notice that $Q(2,1)$ modulo $\langle \langle x_0^2, x_1^2 \rangle \rangle$ is isomorphic to $D_3 \cong S_3$ by Proposition \ref{Proposition: factor dihedral}, too.

As a final comparison of $T$ and $\bar{T}$, we recall that knots have an invariant known as their finite cyclic branched covers, whose fundamental group admits a cyclic presentation. Sieradski's cyclically presented groups $S(2,n)= G_n(x_0x_2x_1^{-1})$ \cite{SierSqua} are the fundamental group of the $n$-fold cyclic covering of $S^3$ branched over the trefoil knot \cite{CavHegKimSier}, where the first $n=5$ groups are finite and are $1, \mathbb{Z}_3, Q_8, SL(2,3),$ and $SL(2,5)$ \cite{JohnMawFibType}. While $\bar{T}$ is not a 1-knot group, we may perform the same process to obtain 
\begin{align*}
\bar{T} & = \langle a, b: bab^{-1}a^{-2}b^2ab^{-1}a^{-1} \rangle \\
&=\langle a, b, x: bab^{-1}a^{-2}b^2ab^{-1}a^{-1}, x=ba^{-1} \rangle \\
&=\langle a, x: (xa)a(a^{-1}x^{-1})a^{-2}(xa)^2a(a^{-1}x^{-1})a^{-1}\rangle \\
&=\langle a, x: xax^{-1}a^{-2}(xa)^2x^{-1}a^{-1}\rangle \\
&=\langle a, x: x(ax^{-1}a^{-1})(a^{-1}xa)x(ax^{-1}a^{-1})\rangle
\end{align*}
where $\bar{T}/ \langle \langle a^n \rangle \rangle$ is a shift extension (as in Section \ref{Subsection: cyc pres}) and has cyclically presented retraction kernel given by $G_n((x_0x_1^{-1})^2x_{n-1})$. Here the first $n=4$ groups are finite which can be proved using GAP.

\begin{Lemma} The groups $G(n)=G_n((x_0x_1^{-1})^2x_{n-1})$ have $G(1) \cong 1, G(2) \cong \mathbb{Z}_3, G(3) \cong \mathbb{Z}_{13}$, and $G(4)$ a solvable group of order $39,000$. The group $G(5)$ is infinite with abelianization $\mathbb{Z}_{11}^2$.
\end{Lemma}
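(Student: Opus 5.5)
The proof will proceed case by case in $n$, since the statement is a list of concrete computations for the cyclically presented groups $G(n)=G_n((x_0x_1^{-1})^2x_{n-1})$. The small cases are done by hand; $n=4$ and $n=5$ rely on GAP, in the same way as the search in Subsection \ref{Subsection: finiteness}.

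For $n=1$ all subscripts coincide. The relator collapses to $(x_0x_0^{-1})^2x_0=x_0$, so $G(1)=1$.

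For $n=2$ the relators are
\[
x_0x_1^{-1}x_0x_1^{-1}x_1 = x_0x_1^{-1}x_0
\quad\text{and}\quad
x_1x_0^{-1}x_1x_0^{-1}x_0 = x_1x_0^{-1}x_1 .
\]
From the first, $x_1=x_0^2$. Substituting into the second gives $x_0^3=1$, so $G(2)\cong\mathbb{Z}_3$.

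For $n=3$ the relator is $w=x_0x_1^{-1}x_0x_1^{-1}x_2$, which yields
\[
x_2=(x_0x_1^{-1})^{-2}=x_1x_0^{-1}x_1x_0^{-1}.
\]
I would eliminate $x_2$ and substitute into the two shifted relators. This gives a two-generator, two-relator presentation. I would then try to show it is abelian, perhaps by first finding $x_1$ as a power of $x_0$ as in Example \ref{Example: Dihedral}. In the abelian case, the relation matrix for the abelianization is the circulant with first row $(2,-2,1)$. Its determinant is the product of $2-2\omega+\omega^2$ over the cube roots of unity $\omega$. At $\omega=1$ this factor is $1$. At $\omega\neq 1$ the two factors are complex conjugates, and their product is computed using $\omega+\bar\omega=-1$ and $\omega\bar\omega=1$:
\[
|2-2\omega+\omega^2|^2 = 13 .
\]
So $G(3)^{ab}\cong\mathbb{Z}_{13}$. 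If the hand argument for commutativity stalls, I would run coset enumeration (Size and StructureDescription in GAP) to confirm $|G(3)|=13$.

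For $n=4$ the circulant determinant is the product of $2-2\omega+\omega^{3}$ over the fourth roots of unity. The factors are $1$ at $\omega=1$, $3$ at $\omega=-1$, and $2\mp 3i$ at $\omega=\pm i$, giving
\[
1\cdot 3\cdot(2-3i)(2+3i)=39 ,
\]
so $|G(4)^{ab}|=39$. This is consistent with a solvable group of order $39{,}000$. Here I would rely on GAP: Size to get the order and IsSolvable to confirm solvability, perhaps also displaying the derived series to make the claim checkable. This is the main obstacle, since a hand proof of the order seems impractical.

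For $n=5$, AbelianInvariants gives $\mathbb{Z}_{11}^2$. This is consistent with the circulant determinant, whose factors are $1$ at $\omega=1$ and $121$ for the product over the primitive fifth roots of unity. Infiniteness would be proved as in Example \ref{Example: normally generated by involutions}. I would first try NewmanInfinityCriterion on $G(5)$ with $p=11$. If that fails, I would pass to the derived subgroup or a low-index subgroup and apply Newman's criterion there for a suitable prime.
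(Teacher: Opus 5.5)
Your proposal is correct and is essentially the paper's approach: the paper gives no argument beyond asserting these facts as GAP computations, and your hand computations for $n=1,2$ and the circulant-determinant checks $|G(3)^{\mathrm{ab}}|=13$, $|G(4)^{\mathrm{ab}}|=39$, $|G(5)^{\mathrm{ab}}|=121$ are correct consistency checks on top of that computational verification. One practical remark: Newman's criterion applied directly to the balanced five-generator presentation of $G(5)$ with $p=11$ cannot succeed, since $d=2$ and the required inequality $r-g=0\le d^2/4-d=-1$ fails, so you should go straight to your fallback of a finite-index subgroup such as $G(5)'$ (or a low-index subgroup with infinite abelianization).
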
 

Next consider $Q(2,3)$. Here there are six generators and as in the decomposition of $Q(2,2)$ we find an isomorphism with three copies of $T$ amalgamated over $u=\theta(u)^{-1}=\theta^2(u)$. In this way, for $k=2$ the pattern repeats as $l$ increases, and the structure of $Q(2,l)$ alternates between an increasing number of copies of $T$ and $\bar{T}$. \\

\noindent \textbf{Case}  $\textbf{k=3}: \ $ The groups $Q(3,l)$ for $l>0$ utilize Higman's presentation of the trivial group on $n=3$ generators. These groups may be broken down into analogous groups
\begin{align*}
T_3 &\cong \langle a, b, c: u \theta(u), \theta(u)\theta^2(u) \rangle \\
\bar{T}_3 &\cong \langle a, b, c: u = \theta(u), \theta(u)= \theta^2(u) \rangle
\end{align*}
as in Table \ref{Table: Q stuff}. The groups $Q(3,2k)$ for $k>0$ are in $\mathcal{G}(\mathcal{B})$ by Theorem \ref{Theorem: Q structure} and are built from $2k$ copies of the group $\bar{T}_3$. When $l$ is odd, the groups $Q(3,2k+1)$ for $k\geq 0$ are quotients of groups in $\mathcal{G}(\mathcal{B})$ modulo the normal closure of $u^2$. These groups are generated by involutions. The group $Q(3,1)$ will be investigated shortly where $Q(3,1)\cong T_3 / \langle \langle u^2 \rangle \rangle$. The group $Q(3,3)$ was shown to be infinite using computational methods in Example \ref{Example: normally generated by involutions}. For brevity, $Q(3,3)$ is described as $T_3 \underset{\mathbb{Z}_2}{\ast} T_3 \underset{\mathbb{Z}_2}{\ast} T_3 / \langle \langle u^2 \rangle \rangle$ in Table \ref{Table: Q stuff}, but more precisely the quotient modulo $\langle \langle u^2 \rangle \rangle$ is taken in each copy of $T_3$ prior to amalgamation. Equivalently then, $Q(3,3)$ is isomorphic to the free product of three copies of $Q(3,1)$ amalgamated over the subgroups $\langle u \rangle$. This pattern continues in that the groups $Q(3,2k+1)$ are built from $2k+1$ copies of $Q(3,1)$.

\begin{table}[t] 
\begin{center}
\begin{tabular}{|c|c|c|c|} \hline
$k \backslash l$  & 1 & 2 & 3\\ \hline
1 & $\mathbb{Z}_2$ & $\mathbb{Z}$ & $\mathbb{Z}_2$ \\ \hline
2 & \begin{tabular}{c} $T$ \end{tabular} & \begin{tabular}{c} 2-knot group, \\ $\bar{T} \underset{\mathbb{Z}}{\ast} \bar{T}$ \end{tabular}  & \begin{tabular}{c} 2-knot group, \\ $T \underset{\mathbb{Z}}{\ast} T \underset{\mathbb{Z}}{\ast} T$ \end{tabular} \\ \hline
3 & \begin{tabular}{c} gen. by invol., \\ $T_3 / \langle \langle u^2 \rangle \rangle$ \end{tabular} & \begin{tabular}{c} 2-knot group, \\ $\bar{T}_3 \underset{\mathbb{Z}}{\ast} \bar{T}_3$ \end{tabular} & \begin{tabular}{c} gen. by invol., \\ $T_3 \underset{\mathbb{Z}_2}{\ast} T_3 \underset{\mathbb{Z}_2}{\ast} T_3 / \langle \langle u^2 \rangle \rangle$ \end{tabular} \\  \hline
\end{tabular}
\end{center}
\caption{The groups $Q(k,l)=G_{kl}(\epsilon (u_0u_1)))=G_{kl}(x_0x_lx_0^{-1}x_l^{-2}x_1x_{l+1}x_1^{-1}x_{l+1}^{-2})$ for $1\leq k, l \leq 3$.}
\label{Table: Q stuff}
\end{table}

\begin{Lemma} The groups $T_3$ and $\bar{T}_3$ in $\mathcal{G}(\mathcal{B})$ are 2-knot groups but not 1-knot groups.
\end{Lemma}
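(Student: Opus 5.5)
The plan has two parts: membership in $\mathcal{G}(\mathcal{B})$ and failure to be 1-knot groups.

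\emph{Membership in $\mathcal{G}(\mathcal{B})$.} Take $u=x_0x_1x_0^{-1}x_1^{-2}$ and the cyclic substitution $\epsilon=\epsilon_u$ on three generators. Its substitution group is Higman's group $H=G_3(u)$, which is trivial by Lemma \ref{Lemma: free product higman}. The presentation $\mathcal{H}=\langle x_0,x_1,x_2: u,\theta(u),\theta^2(u)\rangle$ is balanced.
\begin{itemize}
\item For $T_3$, the source presentation is $\langle u_0,u_1,u_2: u_0u_1, u_1u_2\rangle$. It has deficiency one and presents $\mathbb{Z}$, since $u_1=u_0^{-1}$ and $u_2=u_0$.
\item For $\bar{T}_3$, the source presentation is $\langle u_0,u_1,u_2: u_0u_1^{-1}, u_1u_2^{-1}\rangle$. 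It also has deficiency one and presents $\mathbb{Z}$.
\end{itemize}
Applying $\epsilon$ to these relators gives exactly the stated presentations of $T_3$ and $\bar{T}_3$. Hence both presentations lie in $\mathcal{B}$. Lemma \ref{Lemma: Z by 1} and Theorem \ref{Theorem: Z by 1} then show both groups are 2-knot groups with infinite cyclic abelianization.

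\emph{Not 1-knot groups.} I will argue as in Example \ref{Example: T bar not K1}: the Alexander polynomial of a classical knot is symmetric, $\Delta(t)\doteq\Delta(t^{-1})$ up to units $\pm t^j$. So it suffices to compute $\Delta$ for each group and show it is not symmetric.

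First I fix the abelianization map. Under abelianization $u\mapsto x_1^{-1}$, so $\theta^i(u)\mapsto x_{i+1}^{-1}$.
\begin{itemize}
\item For $T_3$, the relators give $x_1x_2=1$ and $x_2x_0=1$. So the map to $\langle t\rangle$ is $x_0,x_1\mapsto t$, $x_2\mapsto t^{-1}$.
\item For $\bar{T}_3$, all $x_i\mapsto t$.
\end{itemize}

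Next I compute the Fox derivatives of $u$:
\[
\partial u/\partial x_0 = 1 - x_0x_1x_0^{-1},
\qquad
\partial u/\partial x_1 = x_0 - x_0x_1x_0^{-1}x_1^{-1} - x_0x_1x_0^{-1}x_1^{-2}.
\]
The derivatives of $\theta(u)$ and $\theta^2(u)$ follow by shifting indices. For a product relator $w_1w_2$ I use $\partial(w_1w_2)=\partial w_1 + w_1\,\partial w_2$; for $w_1w_2^{-1}$ I use $\partial(w_1w_2^{-1})=\partial w_1 - w_1w_2^{-1}\,\partial w_2$. Applying the abelianization gives the $2\times 3$ Alexander matrix over $\mathbb{Z}[t^{\pm1}]$.

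Since each presentation has deficiency one, the first elementary ideal is generated by the $2\times 2$ minors. Deleting one column (corrected by the factor $(\text{image of } x_j)-1$, in the usual way) gives $\Delta(t)$ up to units. I will then check that $\Delta(1)=\pm1$ as a sanity check, and verify that $\Delta(t)\not\doteq\Delta(t^{-1})$. I expect something like a quadratic with unequal leading and constant coefficients, analogous to $2t^2-2t-1$ for $\bar{T}$.

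\emph{Main obstacle.} The substantive risk is the bookkeeping in this computation. Two points need care: the sign and unit normalisation in the deleted-column formula, especially for $T_3$ where $x_2\mapsto t^{-1}$ breaks the uniform substitution; and the possibility that the result happens to be symmetric. If that happens for one group, my fallback is a different invariant. I would use the finite quotient obtained by adding $u^2$: for $T_3$ this is $Q(3,1)$. I would compare it against the restrictions on quotients of 1-knot groups, or use the Alexander module structure rather than just the polynomial. First, though, I would simply carry out the Fox calculus, possibly checked in GAP.
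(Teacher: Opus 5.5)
Your proposal is correct and follows the same route as the paper. Membership in $\mathcal{G}(\mathcal{B})$ is by construction: your deficiency-one presentations of $\mathbb{Z}$ composed with Higman's three-generator presentation of the trivial group are exactly that construction. Exclusion from $K_1$ is via a non-symmetric Alexander polynomial. Carrying out your Fox calculus with your chosen abelianization gives $t^3-2t^2+2t-2$ for $T_3$ (the paper's $2t^3-2t^2+2t-1$ after $t\mapsto t^{-1}$) and $3t^4-6t^3+3t+1$ for $\bar{T}_3$. Both are visibly non-symmetric, so your guess of a quadratic is off, but no fallback is needed.
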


\begin{proof} The groups are in $\mathcal{G}(\mathcal{B})$ by construction and the Alexander polynomials for $T_3$ and $\bar{T}_3$ are $2t^3-2t^2+2t-1$ and $3t^4-6t^3+3t+1$, respectively. The proof is routine and omitted but note that the polynomials may be verified using the HAP package \cite{HAP} in GAP. The polynomials are not symmetric so these are not the groups of 1-knots.
\qed
\end{proof}

We have $Q(3,1)\cong T_3 / \langle \langle u^2 \rangle \rangle$, where as with $T$ and $\bar{T}$, $T_3 / \langle \langle u^2 \rangle \rangle$ is isomorphic to $\bar{T}_3 / \langle \langle u^2 \rangle \rangle$. However, this time it could not determined if the quotient is finite. That is, the group $Q(3,1)$ may be finite. Some evidence for finiteness comes from an analogous quotient: the group presented by 
$\langle a,b,c: u'\theta(u'), \theta(u')\theta^2(u') \rangle$ with $u'=x_0x_1x_0^{-1}$ is in $\mathcal{G}(\mathcal{B})$ with polynomial $t^3-3t^2+2t-1$, so is not a 1-knot group, and modulo the normal closure of $(u')^2$ has quotient $D_7$. What makes this situation different from $Q(3,1)$ is that the relation $(u')^2=1$ is equivalent via cyclic reduction to the stronger relation $x_1^2=1$.

Recall the quotient of $Q(3,1)$ modulo $M= \langle \langle x_0^2, x_1^2, x_2^2 \rangle \rangle$ is isomorphic to $D_7$ by Proposition \ref{Proposition: factor dihedral}. In fact, modulo any two of $\{ x_0^2, x_1^2, x_2^2 \}$ the factor group is isomorphic to $D_7$. Consider then the quotient
$$B= Q(3,1)/ \langle \langle x_2^2 \rangle \rangle $$
by a square of one of the generators. Equivalently, $B$ is the quotient of $T_3$ or $\bar{T}_3$ by $\langle \langle x_2^2 \rangle \rangle$ since $\theta(u)^2$ is a consequence of $x_2^2$. We show $B$ is a quotient of the group $A \cong \langle x_0, x_1: u^2 \rangle$ from \cite[p. 72]{NewDis}, which B. B. Newman shows is not residually torsion free nilpotent. Indeed, he shows the element $x_1^2 \in A$ is nontrivial and contained in all terms of the lower central series. Notice that we kill this element when forming $Q(3,1)/M \cong D_7$, but nevertheless the quotient is not nilpotent (the nilpotent dihedral groups have order a power of two).

\begin{Lemma} The group $B= Q(3,1)/ \langle \langle x_2^2 \rangle \rangle$ is the quotient of the group $A \cong \langle x_0, x_1: u^2 \rangle$ by the relation $u=(x_1^{-1}u^{-1}x_1)x_0(x_1^{-1}u^{-1}x_1)^{-1}x_0^{-2}$.
\end{Lemma}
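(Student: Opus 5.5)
The plan is to start from the cyclic presentation of $Q(3,1)$, add the relator $x_2^2$, and reduce it by Tietze transformations to a two-generator presentation on $x_0,x_1$.

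Here $k=3$ and $l=1$, so $u=x_0x_1x_0^{-1}x_1^{-2}$. The presentation of $Q(3,1)$ has relators $u\theta(u)$, $\theta(u)\theta^2(u)$ and $\theta^2(u)u$, with subscripts taken modulo $3$. Adding $x_2^2$ gives the following presentation of $B$:
$$B\cong\langle x_0,x_1,x_2: u\theta(u),\ \theta(u)\theta^2(u),\ \theta^2(u)u,\ x_2^2\rangle .$$

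The first step uses Lemma \ref{Lemma: length 2} with $n=3$ odd. It lets me trade the three relators for an equivalent set: $u^2=1$ together with $\theta(u)=u^{-1}$ and $\theta^2(u)=u^{-1}$. Concretely, the first two relators give $\theta(u)=u^{-1}$ and $\theta^2(u)=\theta(u)^{-1}=u$, and the third then gives $u^2=1$. Once $u^2=1$ holds, $u=u^{-1}$, so each of the last two relations may be written with either $u$ or $u^{-1}$ on the right.

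The second step uses the relation $x_2^2=1$ to simplify $\theta(u)=x_1x_2x_1^{-1}x_2^{-2}$ to $x_1x_2x_1^{-1}$. The relation $\theta(u)=u^{-1}$ then becomes
$$x_2=x_1^{-1}u^{-1}x_1 .$$
This expresses $x_2$ as a word in $x_0,x_1$, because $u$ involves only $x_0$ and $x_1$. I would therefore eliminate the generator $x_2$ together with this defining relation.

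Substituting into the remaining relations gives three things:
\begin{itemize}
\item The relation $x_2^2=1$ becomes $x_1^{-1}u^{-2}x_1=1$. This is a consequence of $u^2$, so it can be deleted.
\item The relation $\theta^2(u)=u$ reads $x_2x_0x_2^{-1}x_0^{-2}=u$. After substitution this is exactly $u=(x_1^{-1}u^{-1}x_1)x_0(x_1^{-1}u^{-1}x_1)^{-1}x_0^{-2}$.
\item The remaining relator is $u^2$.
\end{itemize}
Hence $B\cong\langle x_0,x_1: u^2,\ u=(x_1^{-1}u^{-1}x_1)x_0(x_1^{-1}u^{-1}x_1)^{-1}x_0^{-2}\rangle$. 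This is $A$ modulo the stated relation, which is the claim.

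The only delicate point is the bookkeeping that the trade of relators in the first step is a genuine Tietze equivalence, and that the choice between $u$ and $u^{-1}$ is harmless once $u^2=1$. I would check the first step by writing each old relator as a product of conjugates of the new ones, and conversely.
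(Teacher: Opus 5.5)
Your proposal is correct and follows essentially the same Tietze-transformation argument as the paper. In both, $u\theta(u)=1$ is solved modulo $x_2^2$ for $x_2=x_1^{-1}u^{-1}x_1$, then $x_2$ is eliminated by substitution into the relation $\theta^2(u)=u^{\pm1}$. The only difference is the order of bookkeeping: the paper first extracts $\theta(u)^2$ and later swaps $x_2^2$ for $u^2$, whereas you obtain $u^2$ directly from the odd case of Lemma \ref{Lemma: length 2} and delete $x_2^2$ at the end as a consequence of $u^2$.
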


\begin{proof} Beginning with the defining presentation $\mathcal{Q}(3,1)$, multiply the third relator by inverses and cyclic permutations of the first two to get $\theta(u)^{-2}$, equivalently $\theta(u)^2$. Then adjoin $x_2^2$ to present the quotient $B$, and notice $\theta(u)^2=(x_1x_2x_1^{-1}x_2^{-2})^2$ is a consequence of $x_2^2$. Thus
\begin{align*}
B & \cong \langle x_0, x_1, x_2: u\theta(u), \theta(u)\theta^2(u), \theta(u)^2, x_2^2 \rangle \\
&\cong \langle x_0, x_1, x_2: u\theta(u), \theta(u)\theta^2(u), x_2^2 \rangle.
\end{align*}
Now $u\theta(u)=ux_1x_2x_1^{-1} \in B$ so $x_2=x_1^{-1}u^{-1}x_1$. It follows that $u^2=1$ is equivalent to $x_2^2=1$, so we may replace $x_2^2$ with $u^2$. Furthermore, we may substitute $x_2=x_1^{-1}u^{-1}x_1$ into the relator $\theta(u)\theta^2(u)$ and delete $x_2$ and its defining relation to get
$$B \cong \langle x_0, x_1: u^2, u^{-1}(x_1^{-1}u^{-1}x_1)x_0(x_1^{-1}u^{-1}x_1)^{-1}x_0^{-2} \rangle $$
as desired.
\qed
\end{proof}

For an alternative description of the quotient $B$, begin with $C \cong \langle x_0, x_2: x_2^2, \theta^2(u)^2 \rangle$. Note this is isomorphic to $A/ \langle \langle x_0^2\rangle \rangle $ but we have switched to the generators $\{x_0, x_2 \}$. It is not hard to show that $C$ is a shift extension of the form  $C \cong T \rtimes \mathbb{Z}_2$ for $T$ the trefoil group. Then form the HNN-extension
$$D= \langle C, x_1: x_1x_2x_1^{-1}= \theta^2(u)^{-1} \rangle$$
whose defining relation is equivalent to $u\theta^2(u)=1$ modulo $x_2^2=1$. Finally, $B$ is isomorphic to $D$ modulo the relation $x_2=x_1^{-1}ux_1=x_1^{-1}x_0x_1x_0^{-1}x_1^{-1}$.

To conclude we note that the group $A$ can be used in a similar way to describe $Q(3,1)$ itself. Starting with $A$ form the HNN-extension $J=\langle A, x_2: x_2x_0x_2^{-1}=ux_0^2 \rangle$. The elements $x_0$ and $ux_0^2$ are not conjugate to $u$ in $A$ so generate infinite cyclic subgroups (for $A$ is a one relator group where all torsion elements are conjugate to the root $u$). Now $Q(3,1)$ is the quotient of $J$ by $u\theta(u)=1$.

\bibliographystyle{plain}
\bibliography{mybib-copy}

\end{document}